\theoremstyle{plain}
\newtheorem{theorem}{Theorem}[section]
\newtheorem{lemma}[theorem]{Lemma}
\newtheorem{corollary}[theorem]{Corollary}
\theoremstyle{definition}
\newtheorem{definition}[theorem]{Definition}
\newtheorem{example}[theorem]{Example}
\newtheorem{remark}[theorem]{Remark}
\numberwithin{equation}{section}
\newcommand{\all}{\hbox{for all}}
\newcommand{\bra}[2]{\langle#1,#2\rangle}
\newcommand{\Bra}[2]{\big\langle#1,#2\big\rangle}
\newcommand{\dbs}{^{**}}
\newcommand{\dist}{\hbox{\rm dist}}
\newcommand{\dom}{\hbox{\rm dom}}
\newcommand{\eps}{\varepsilon}
\newcommand{\F}{{\mathbb F}}
\newcommand{\half}{{\textstyle\frac{1}{2}}}
\newcommand{\I}{\mathbb I}
\newcommand{\ifff}{\Longleftrightarrow}
\newcommand{\IKT}{{\I_\KT}}
\newcommand{\infn}{\inf\nolimits}
\newcommand{\intr}{\hbox{\rm int}}
\newcommand{\KT}{{\wt K}}
\newcommand{\limn}{\lim\nolimits}
\newcommand{\lr}{\Longrightarrow}
\newcommand{\minn}{\min\nolimits}
\newcommand{\NN}{\mathbb N}
\newcommand{\pIK}{{\partial\I_K}}
\newcommand{\ptKT}{{\partial\tauKT}}
\newcommand{\PCLSC}{{\cal PCLSC}}
\newcommand{\qlr}{\quad\Longrightarrow\quad}
\newcommand{\quand}{\quad\hbox{and}\quad}
\newcommand{\rbar}{\,]{-}\infty,\infty]}
\newcommand{\RR}{\mathbb R}
\newcommand{\supn}{\sup\nolimits}
\newcommand{\tauKT}{{\tau_\KT}}
\newcommand{\tauWT}{{\tau_{-\WT}}}
\newcommand{\toto}{\rightrightarrows}
\newcommand{\ts}{\textstyle}
\newcommand{\UT}{{\wt U}}
\newcommand{\VT}{{\wt V}}
\newcommand{\wh}{\widehat}
\newcommand{\wt}{\widetilde}
\newcommand{\WT}{{\wt W}}
\newcommand{\Cor}{Corollary~\ref}
\newcommand{\Def}{Definition~\ref}
\newcommand{\Ex}{Example~\ref}
\newcommand{\Exs}{Examples~\ref}
\newcommand{\Lem}{Lemma~\ref}
\newcommand{\Lems}{Lemmas~\ref}
\newcommand{\Rem}{Remark~\ref}
\newcommand{\Sec}{Section~\ref}
\newcommand{\Thm}{Theorem~\ref}
\newcommand{\Thms}{Theorems~\ref}
\title{Quasidense monotone multifunctions}
\author{
Stephen Simons
\thanks{
Department of Mathematics, University of California, Santa Barbara, CA\ 93106-3080, U.S.A.
Email: \texttt{stesim38@gmail.com}.}}
\date{}
\begin{document}

\maketitle

\begin{abstract}\noindent
In this paper, we discuss quasidense multifunctions from a Banach space into its dual, and use the two sum theorems proved in a previous\break paper to give various characterizations of quasidensity.   We investigate the Fitzpatrick extension of such a multifunction. We prove that, for closed monotone multifunctions,  quasidensity implies type (FPV) and strong maximality, and that quasidensity is equivalent to type (FP).   
\end{abstract}

{\small \noindent {\bfseries 2010 Mathematics Subject Classification:}
{Primary 47H05; Secondary 47N10, 52A41, 46A20.}}

\noindent {\bfseries Keywords:} Multifunction, maximal monotonicity, quasidensity, sum theorem, subdifferential, strong maximality, type (FPV), type (FP).


\section{Introduction}
This is a sequel to the paper \cite{PARTONE}, in which we introduced the concepts of {\em Banach SN space, $L$--positive set, $r_L$--density} and {\em Fitzpatrick extension}.   In this paper we suppose that $E$ is a nonzero real Banach space with dual $E^*$, and we apply some of the results of \cite{PARTONE} to the Banach SN space $E \times E^*$.   In this case, {\em $L$--positive} means the same as {\em monotone} and we use the word {\em quasidense} to stand for {\em $r_L$--dense}.
\par
In \Sec{BANsec}, we introduce some general Banach space notation, and give the concepts and results from \cite{PARTONE} that we shall use.   Let $S\colon\ E \toto E^*$ be a multi-function.   We define the {\em quasidensity} of $S$ in \Def{QDdef}.   We point out in\break \Thm{RLMAXthm} and \Ex{TAILex} that every closed, monotone quasidense\break multifunction is maximally monotone, but that there exists maximally monotone linear operators that are not quasidense.    We define the function $\varphi_S$ in\break \Def{PHdef}. ($\varphi_S$ is the ``Fitzpatrick function'' of $S$, which was originally\break introduced in the Banach space setting in \cite[(1988)]{FITZ}, but lay dormant until it was rediscovered by  Mart\'\i nez-Legaz and Th\'era in \cite[(2001)]{MLT}.  It had been previously considered in the finite--dimensional setting by Krylov in \cite{KRYLOV}.) In\break \Thm{PHTHthm}, we give a criterion in terms of the Fenchel conjugate, ${\varphi_S}^*$, of  $\varphi_S$ for a maximally monotone multifunction to be quasidense (other criteria can be found in \Thms{FUZZDthm}, \ref{FUZZEthm} and \ref{FPthm}). In \Thm{RTRthm} we prove that the subdifferential of a proper, convex, lower semicontinuous function on $E$ is quasidense (thus generalizing Rockafellar's result, \cite[(1970)]{RTRMMT}).   The rest of \Sec{BANsec} is devoted to the discussion of two significant examples that will be used in later parts of the paper.
\par
If $S\colon\ E \toto E^*$ is closed, monotone and quasidense then, in \Sec{FITZDEFsec}, we\break define an associated maximally monotone multifunction $S^\F\colon\ E^* \toto E\dbs$, which we call the {\em Fitzpatrick extension} of $S$.   $S^\F$ is defined formally in terms of ${\varphi_S}^*$ in \Def{FITZdef}, and we give three characterization of $S^\F$ in \Thm{SEQCHARthm} and two in \eqref{NI3} and \eqref{NI4}. \big(It is observed in Appendix 1, \Sec{APPsec}, that\break $(y^*,y\dbs) \in G(S^\F)$ exactly when $(y\dbs,y^*)$ is in the {\em Gossez extension of $G(S)$}\big).   Now let $f$ be a proper, convex, lower semicontinuous function on $E$.   We prove in \Lem{FITZDIFFlem} that $G\big((\partial f)^\F\big) \subset G\big(\partial(f^*)\big)$.   \Lem{FITZDIFFlem} will be used several times in subsequent parts of the paper.   In \Thm{FITZDIFFthm}, we strengthen \Lem{FITZDIFFlem} and prove that, in fact, $(\partial f)^\F = \partial(f^*)$.    \Thm{FITZDIFFthm} will be used in\break \Thms{FITZPARTAUthm} and \ref{PHINKthm} to characterize the Fitzpatrick extensions of the two\break examples introduced in \Sec{BANsec}.
\par
In \Sec{SUMSsec}, we state the {\em Sum theorem with domain constraints} and the {\em Sum theorem with range constraints} that were established in \cite{PARTONE}.
\par
In \Sec{FPVsec}, we prove that every closed, monotone quasidense multifunction is of {\em type (FPV)}.   (Type (FPV) was introduced  independently by Fitzpatrick--Phelps and Verona--Verona in \cite[p.\ 65(1995)]{FITZTWO} and \cite[p.\ 268(1993)]{VERONAS} under the name of ``maximal monotone locally''.)
\par
In \Sec{FUZZsec}, we give two ``fuzzy'' criteria for quasidensity in which an element of $E^*$ is replaced by a nonempty $w(E^*,E)$--compact convex subset of $E^*$, or an element of $E$ is replaced by a nonempty $w(E,E^*)$--compact convex subset of $E$.   We refer the reader to the introduction to \Sec{FUZZsec} for more precise details of these results.   These two fuzzy criteria are applied in \Thm{STRONGthm} to prove that every closed, monotone quasidense multifunction is {\em strongly maximal} in the sense of \cite[Theorems~6.1-2, pp.\ 1386--1387]{CONTROLLED}.
\par
In \Sec{FITZCHARsec}, we give sequential characterizations of $S^\F$ (see \Thm{SEQCHARthm}).   In view of Appendix 1, \Sec{APPsec}, \Thm{SEQCHARthm} actually provides sequential characterizations of the Gossez extension of $S$.
\par
In \Sec{FPsec}, we prove that a maximally monotone multifunction is quasidense if, and only if, it is of {\em type (FP)}.   (Type (FP) was introduced by Fitzpatrick--Phelps in \cite[Section~3(1992)]{FPONE} under the name of ``locally maximal monotone''.)
\par
In Appendix 2, \Sec{FPALTsec}, we indicate how the results of \Sec{FPsec} can be obtained without appealing to the results of \Sec{SUMSsec}, but using instead two results of Rockafellar.   
\par
There are many classes of maximally monotone multifunctions that we will not discuss in great detail in this paper, they share the common feature that they all require the bidual, $E\dbs$, of $E$ for their definition:  {\em Type (D)} and {\em dense type} were introduced by Gossez \big(see Gossez, \cite[Lemme 2.1, p.\ 375(1971)]{GOSSEZ} and Phelps,\break \cite[Section~3(1997)]{PRAGUE} for an exposition\big).   {\em Type (NI)} was first defined in\break \cite[Definition~10, p.\ 183(1996)]{RANGE}, and {\em type (ED)} was introduced in \cite[(1998)]{MANDM},\break (under the name of {\em type (DS)}\,).   Because of the work of Voisei and Z\u alinescu, \cite{VZ}, Marques Alves and Svaiter, \cite[Theorem~4.4, pp.\ 1084--1085(2010)]{ASD},\break Simons, \cite[Theorem~9.9(a), pp.\ 254--255(2011)]{SSDMON} and Bauschke, Borwein, Wang and Yao, \cite[Theorem~3.1, pp.\ 1878--1879(2012)]{BBWYFP} we now know that type (D), dense type, type (FP), type (NI) and type (ED) are all equivalent.   In this paper, we will only discuss in detail classes of the maximally monotone multifunctions, like quasidensity, that can be defined {\em without} reference to the bidual.
\par
The bidual is not mentioned explicitly in the {\em statements} of \Thms{STDthm}, \ref{FPVthm}, \ref{FUZZDthm}, \ref{FUZZEthm}, \ref{STRONGthm} and \ref{FPthm}\big((a)$\ifff$(b)\big), but our {\em proofs} of all of these results\break ultimately depend on the bidual at one point or another.   This raises the\break fascinating question whether there are proofs of any of these results that do not depend on the bidual.  This seems quite a challenge, because it would\break require finding substitutes for \Thms{STDthm} and \ref{STRthm}.
\par
The author would like to thank Mircea Voisei and Regina Burachik for some very helpful comments on an earlier version of this paper.
\section{Basic notation and definitions}\label{BANsec}
If $X$ is a nonzero real Banach space and $f\colon\ X \to \rbar$, we write $\dom\,f$ for the set $\big\{x \in X\colon\ f(x) \in \RR\big\}$.   $\dom\,f$ is the {\em effective domain} of $f$.   We say that $f$ is {\em proper} if $\dom\,f \ne \emptyset$.   We write $\PCLSC(X)$ for the set of all proper convex lower semicontinuous functions from $X$ into $\rbar$.   We write $X^*$ for the dual space of $X$ \big(with the pairing $\bra\cdot\cdot\colon X \times X^* \to \RR$\big).  If $f \in \PCLSC(X)$ then, as usual, we define the {\em Fenchel conjugate}, $f^*$, of $f$ to be the function on $X^*$ given by $x^* \mapsto \supn_X\big[x^* - f\big]$.
We write $X\dbs$ for the bidual of $X$ \big(with the pairing $\bra\cdot\cdot\colon X^* \times X\dbs \to \RR$\big).   If $f \in \PCLSC(X)$ and $f^* \in \PCLSC(X^*)$, we define $f\dbs\colon X\dbs \to \rbar$ by $f\dbs(x\dbs) := \sup_{X^*}\big[x\dbs - f^*\big]$.   If $x \in X$, we write $\wh x$ for the canonical image of $x$ in $X\dbs$, that is to say, for all $(x,x^*) \in X \times X^*$,\break $
\bra{x^*}{\wh x} = \bra{x}{x^*}$.\quad If $f \in \PCLSC(X)$ then the {\em subdifferential of} $f$ is the multifunction $\partial f\colon\ E  \toto E^*$ that satisfies $x^* \in \partial f(x) \ifff f(x) + f^*(x^*) = \bra{x}{x^*}$.   We write $X_1$ for the closed unit ball of $X$.
\medbreak
We now collect the definitions and results from \cite{PARTONE} that we shall use.   We suppose that $E$ is a nonzero real Banach space with dual $E^*$.   For all $(x,x^*) \in E \times E^*$, we write $\|(x,x^*)\| := \sqrt{\|x\|^2 + \|x^*\|^2}$.   We represent $(E \times E^*)^*$ by $E^* \times E\dbs$, under the pairing
$$\Bra{(x,x^*)}{(y^*,y\dbs)} := \bra{x}{y^*} + \bra{x^*}{y\dbs}.$$
The dual norm on $E^* \times E\dbs$ is given by  $\|(y^*,y\dbs)\| := \sqrt{\|y^*\|^2 + \|y\dbs\|^2}$.
\smallbreak
Now let $S\colon\ E \toto E^*$.   We write $G(S)$ for the graph of $S$, $D(S)$ for the domain of $S$ and $R(S)$ for the range of $S$.
We will always suppose that $G(S) \ne \emptyset$ (equivalently, $D(S) \ne \emptyset$ or $R(S) \ne \emptyset$).   We say that $S$ is {\em closed} if $G(S)$ is closed. 
\begin{definition}\label{QDdef}
We say that $S$ is {\em quasidense} if, for all $(x,x^*) \in E \times E^*$,
\begin{equation*}
\infn_{(s,s^*) \in G(S)}\big[\half\|s - x\|^2 + \half\|s^* - x^*\|^2 + \bra{s - x}{s^* - x^*}\big] \le 0.
\end{equation*}
See \cite[Example 7.1, eqn.\ (28), p.\ 1031]{PARTONE}.   {\em Quasidensity} is actually a special case of the more general concept of {\em $r_L$--density} considered in \cite[Section 4]{PARTONE}.
\end{definition} 
\begin{theorem}[Quasidensity and maximality]\label{RLMAXthm}
Let $S\colon\ E \toto E^*$ be closed, monotone and quasidense.   Then $S$ is maximally monotone. 
\end{theorem}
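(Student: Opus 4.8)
The plan is to verify maximality directly through its standard characterization. Since $S$ is assumed monotone, it suffices to show that whenever a point $(x,x^*) \in E \times E^*$ is monotonically related to $G(S)$ — meaning that $\bra{s-x}{s^*-x^*} \ge 0$ for every $(s,s^*) \in G(S)$ — that point must already belong to $G(S)$.

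First I would record the elementary fact that the quantity inside the infimum in \Def{QDdef} is never negative. Writing $u := s-x$ and $u^* := s^*-x^*$ and using $\bra{u}{u^*} \ge -\|u\|\,\|u^*\|$, we get
$$\half\|u\|^2 + \half\|u^*\|^2 + \bra{u}{u^*} \ \ge\ \half\big(\|u\| - \|u^*\|\big)^2 \ \ge\ 0.$$
Hence quasidensity, which asserts that the infimum over $G(S)$ is $\le 0$, forces that infimum to equal $0$.

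Now fix $(x,x^*)$ monotonically related to $G(S)$ and apply quasidensity at this point: for each $n$ there exists $(s_n,s_n^*) \in G(S)$ with
$$0 \ \le\ \half\|s_n - x\|^2 + \half\|s_n^* - x^*\|^2 + \bra{s_n - x}{s_n^* - x^*} \ <\ \tfrac{1}{n}.$$
The key step is to invoke monotone relatedness: since $\bra{s_n - x}{s_n^* - x^*} \ge 0$ for every $n$, the nonnegative term $\half\|s_n - x\|^2 + \half\|s_n^* - x^*\|^2$ is bounded above by the whole expression, and therefore tends to $0$. Consequently $s_n \to x$ in $E$ and $s_n^* \to x^*$ in $E^*$. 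Because $G(S)$ is closed, the limit $(x,x^*)$ lies in $G(S)$, which is exactly what maximality requires.

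I do not expect a genuine obstacle: the argument is short once the right characterization of maximality is in hand. The only point demanding care is the third step, where the sign information supplied by monotone relatedness is precisely what allows the inner-product term to be discarded and the squared-norm term to be forced to zero; without that sign the pairing could absorb the norms, and norm convergence of the approximating sequence would fail.
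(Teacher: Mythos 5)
Your proof is correct and follows essentially the same route as the paper's: both use the characterization of maximality via monotonically related points, apply quasidensity at such a point, use the sign $\bra{s-x}{s^*-x^*}\ge 0$ supplied by monotone relatedness to bound the squared norms, and conclude by closedness of $G(S)$. The only cosmetic differences are your use of a sequence indexed by $n$ in place of the paper's $\eps\to 0$, and your (correct, but not needed) preliminary remark that the infimum in \Def{QDdef} is automatically nonnegative.
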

\begin{proof}
Let $(x,x^*) \in E \times E^*$, $G(S) \cup \{(x,x^*)\}$ be monotone and $\eps > 0$.   By hypothesis, there exists $(s,s^*) \in G(S)$ such that
\begin{equation*}
\half\|s - x\|^2 + \half\|s^* - x^*\|^2 + \bra{s - x}{s^* - x^*} < \eps.
\end{equation*}
Since $G(S) \cup \{(x,x^*)\}$ is monotone, $\bra{s - x}{s^* - x^*} \ge 0$.   Consequently,\break $\half\|s - x\|^2 + \half\|s^* - x^*\|^2 < \eps$.   However, $G(S)$ is closed:   thus, letting $\eps \to 0$,  $(x,x^*) \in G(S)$.   This completes the proof of the maximality of $S$.   \big(This proof is adapted from that of \cite[Lemma 4.7, p.\ 1027]{PARTONE} --- the result appears explicitly in \cite[Theorem 7.4(a), pp.\ 1032--1033]{PARTONE}.\big)     
\end{proof}
\begin{example}[The tail operator]\label{TAILex}
Let $E = \ell_1$, and define the linear map $T\colon\ \ell_1 \to \ell_\infty = E^*$ by $(Tx)_n = \sum_{k \ge n} x_k$.   It is well known that $T$ is maximally monotone.   However, $T$ is not quasidense:  see \cite[Example 7.10, pp.\ 1034--1035]{PARTONE}. 
\end{example}
\begin{definition}\label{PHdef}We define $\varphi_S\colon\ E \times E^* \to \rbar$ by
\begin{equation*}
\varphi_S(x,x^*) := \supn_{(s,s^*) \in G(S)}\big[\bra{s}{x^*} + \bra{x}{s^*} - \bra{s}{s^*}\big].
\end{equation*}
\end{definition}
\begin{lemma}\label{PHISlem}
Let $S\colon\ E \toto E^*$ be closed, monotone and quasidense.   Then:
\begin{equation}\label{PHIS1}
\left.
\begin{gathered}
\all\ (x,x^*) \in E \times E^*, \varphi_S(x,x^*) \ge \bra{x}{x^*} \quad\hbox{and}\\
\big\{(x,x^*) \in E \times E^*\colon\ \varphi_S(x,x^*) = \bra{x}{x^*}\big\} = G(S).
\end{gathered}
\right\}
\end{equation}
\end{lemma}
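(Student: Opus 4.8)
The plan is to reduce both assertions to controlling a single infimum, via the elementary identity
$$\bra{s}{x^*} + \bra{x}{s^*} - \bra{s}{s^*} = \bra{x}{x^*} - \bra{x-s}{x^*-s^*},$$
which holds for every $(x,x^*),(s,s^*) \in E \times E^*$. Substituting this into \Def{PHdef} rewrites the Fitzpatrick function as
$$\varphi_S(x,x^*) = \bra{x}{x^*} - \infn_{(s,s^*)\in G(S)}\bra{x-s}{x^*-s^*},$$
so that $\varphi_S(x,x^*) \ge \bra{x}{x^*}$ holds exactly when $\infn_{(s,s^*)}\bra{x-s}{x^*-s^*} \le 0$, and the equality $\varphi_S(x,x^*) = \bra{x}{x^*}$ holds exactly when that infimum equals $0$.

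For the inequality in \eqref{PHIS1}, I would fix $(x,x^*)$ and note that, since $\bra{x-s}{x^*-s^*} = \bra{s-x}{s^*-x^*}$ and the squared-norm terms are nonnegative, for each $(s,s^*) \in G(S)$ one has $\bra{x-s}{x^*-s^*} \le \half\|s-x\|^2 + \half\|s^*-x^*\|^2 + \bra{s-x}{s^*-x^*}$. Taking the infimum over $G(S)$ and applying quasidensity (\Def{QDdef}) gives $\infn_{(s,s^*)}\bra{x-s}{x^*-s^*} \le 0$, whence $\varphi_S(x,x^*) \ge \bra{x}{x^*}$ for all $(x,x^*)$.

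For the set identity I would prove the two inclusions separately. If $(x,x^*) \in G(S)$, then monotonicity gives $\bra{x-s}{x^*-s^*} \ge 0$ for all $(s,s^*) \in G(S)$, while the choice $(s,s^*) = (x,x^*)$ shows the infimum is $\le 0$; hence $\infn_{(s,s^*)}\bra{x-s}{x^*-s^*} = 0$ and so $\varphi_S(x,x^*) = \bra{x}{x^*}$. Conversely, if $\varphi_S(x,x^*) = \bra{x}{x^*}$, the rewriting forces $\infn_{(s,s^*)}\bra{x-s}{x^*-s^*} = 0$, so that $\bra{x-s}{x^*-s^*} \ge 0$ for every $(s,s^*) \in G(S)$; equivalently, $G(S) \cup \{(x,x^*)\}$ is monotone. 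Because $S$ is closed, monotone and quasidense, \Thm{RLMAXthm} shows that $S$ is maximally monotone, and this forces $(x,x^*) \in G(S)$.

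The content of the argument is entirely located in the opening identity together with the observation that quasidensity is exactly the hypothesis needed to propagate the bound $\varphi_S \ge \bra\cdot\cdot$ from $G(S)$ to all of $E \times E^*$; the reverse inclusion of the set identity is where maximal monotonicity (hence \Thm{RLMAXthm}, and ultimately quasidensity again) is invoked. I anticipate no genuine obstacle beyond tracking the sign conventions in the rewriting and checking that dropping the nonnegative norm terms is carried out in the correct direction for the infimum.
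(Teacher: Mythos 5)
Your proof is correct, and it supplies an argument that the paper itself omits: the paper's own ``proof'' of \Lem{PHISlem} is just a citation of \cite[Lemma 7.7, eqn.\ (30), p.\ 1034]{PARTONE}, where the statement is established in the more general framework of Banach SN spaces, so there is no in-text proof to compare with yours line by line. Your route is the natural self-contained one in the Banach space setting. The identity $\bra{s}{x^*}+\bra{x}{s^*}-\bra{s}{s^*}=\bra{x}{x^*}-\bra{x-s}{x^*-s^*}$ reduces both assertions to the sign of $\infn_{(s,s^*)\in G(S)}\bra{x-s}{x^*-s^*}$; quasidensity forces this infimum to be $\le 0$ (dropping the nonnegative squared-norm terms of \Def{QDdef} does go the right way for an infimum, as you note); monotonicity of $S$ gives the inclusion of $G(S)$ in the equality set; and the reverse inclusion follows from maximality, which you legitimately import from \Thm{RLMAXthm} --- that theorem precedes \Lem{PHISlem} in the paper and its proof makes no use of $\varphi_S$, so there is no circularity. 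Your argument is in substance Fitzpatrick's classical one (cf.\ \Rem{PHTHrem}: \eqref{PHIS1} holds for any maximally monotone $S$), with the mild variation that you obtain the first inequality directly from quasidensity rather than routing it through maximality; either way, some hypothesis beyond plain monotonicity is genuinely needed there, since for a non-maximal monotone $S$, for instance $G(S)=\{(0,0)\}$ in $\RR\times\RR$, the inequality $\varphi_S\ge\bra{\cdot}{\cdot}$ fails.
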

\begin{proof}
See \cite[Lemma 7.7, eqn.\ (30), p.\ 1034]{PARTONE}.
\end{proof}
\par
The next result is somewhat subtler, and will be used in \Sec{FITZDEFsec}.
\begin{lemma}\label{PHIATlem}
Let $S\colon\ E \toto E^*$ be closed, monotone and quasidense.   Then:
\begin{gather*}
\all\ (x,x^*) \in E \times E^*,\ {\varphi_S}^*(x^*,\wh x) \ge \bra{x}{x^*}\quad\hbox{ and}\\
\big\{(x,x^*) \in E \times E^*\colon\ {\varphi_S}^*(x^*,\wh x) = \bra{x}{x^*}\big\} = G(S).
\end{gather*}
\end{lemma}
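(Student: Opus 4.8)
The plan is to work directly from the definition of the Fenchel conjugate. Using the pairing $\Bra{(x,x^*)}{(y^*,y\dbs)} = \bra{x}{y^*} + \bra{x^*}{y\dbs}$ together with the defining identity $\bra{u^*}{\wh x} = \bra{x}{u^*}$ of the canonical image, I would first rewrite
${\varphi_S}^*(x^*,\wh x) = \supn_{(u,u^*) \in E\times E^*}\big[\bra{u}{x^*} + \bra{x}{u^*} - \varphi_S(u,u^*)\big]$.
The whole argument then rests on feeding carefully chosen points $(u,u^*)$ into this supremum and invoking \Lem{PHISlem}.

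For the inequality ${\varphi_S}^*(x^*,\wh x) \ge \bra{x}{x^*}$, I would restrict the supremum to points $(u,u^*) \in G(S)$. For such points \Lem{PHISlem} gives $\varphi_S(u,u^*) = \bra{u}{u^*}$, and a one-line rearrangement turns the bracket into $\bra{x}{x^*} - \bra{u-x}{u^*-x^*}$; hence ${\varphi_S}^*(x^*,\wh x) \ge \bra{x}{x^*} - \infn_{(u,u^*)\in G(S)}\bra{u-x}{u^*-x^*}$. It then remains to see that this infimum is $\le 0$, and here quasidensity does the work: since $\half\|u-x\|^2 + \half\|u^*-x^*\|^2 \ge 0$, the quantity $\bra{u-x}{u^*-x^*}$ is pointwise dominated by the integrand of \Def{QDdef}, so its infimum over $G(S)$ is bounded above by the nonpositive quasidensity infimum. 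This yields the desired inequality.

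For the equality I would argue both inclusions. If $(x,x^*)\in G(S)$, then $(x,x^*)$ is itself admissible in the supremum defining $\varphi_S(u,u^*)$, so $\varphi_S(u,u^*) \ge \bra{u}{x^*} + \bra{x}{u^*} - \bra{x}{x^*}$ for every $(u,u^*)$; substituting this into the rewritten conjugate shows every bracket is $\le \bra{x}{x^*}$, whence ${\varphi_S}^*(x^*,\wh x)\le\bra{x}{x^*}$, and combining with the first part gives equality. Conversely, if ${\varphi_S}^*(x^*,\wh x)=\bra{x}{x^*}$, then testing the supremum against an arbitrary $(s,s^*)\in G(S)$—where $\varphi_S(s,s^*)=\bra{s}{s^*}$ by \Lem{PHISlem}—yields $\bra{x}{x^*}\ge\bra{s}{x^*}+\bra{x}{s^*}-\bra{s}{s^*}$, which rearranges exactly to $\bra{s-x}{s^*-x^*}\ge 0$. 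Thus $G(S)\cup\{(x,x^*)\}$ is monotone, and since $S$ is maximally monotone by \Thm{RLMAXthm}, we conclude $(x,x^*)\in G(S)$. The only genuinely delicate point is the bookkeeping with the canonical image $\wh x$ and the dual pairing in the opening reduction; once the supremum has been rewritten, every remaining step is a short substitution from \Lem{PHISlem} combined with \Def{QDdef} and \Thm{RLMAXthm}.
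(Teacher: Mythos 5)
Your proof is correct, but it is not the paper's route: the paper gives no internal proof of this lemma at all, disposing of it by citation to \cite[Lemma 7.7, eqn.\ (33), p.\ 1034]{PARTONE}, where the statement arises as a special case of a result about $r_L$--density in general Banach SN spaces. Your argument, by contrast, is self-contained within $E \times E^*$, and each step checks out: the unwinding ${\varphi_S}^*(x^*,\wh x) = \supn_{(u,u^*) \in E \times E^*}\big[\bra{u}{x^*} + \bra{x}{u^*} - \varphi_S(u,u^*)\big]$ is the correct use of the pairing together with $\bra{u^*}{\wh x} = \bra{x}{u^*}$; restricting the supremum to $G(S)$, where \Lem{PHISlem} gives $\varphi_S(u,u^*) = \bra{u}{u^*}$, turns the bracket into $\bra{x}{x^*} - \bra{u-x}{u^*-x^*}$, and the infimum of $\bra{u-x}{u^*-x^*}$ over $G(S)$ is indeed nonpositive because it is pointwise dominated by the quasidensity integrand of \Def{QDdef}; for $(x,x^*) \in G(S)$ the bound $\varphi_S(u,u^*) \ge \bra{x}{u^*} + \bra{u}{x^*} - \bra{x}{x^*}$ caps every bracket at $\bra{x}{x^*}$; and the reverse inclusion correctly converts ${\varphi_S}^*(x^*,\wh x) = \bra{x}{x^*}$ into monotonicity of $G(S) \cup \{(x,x^*)\}$ and finishes with \Thm{RLMAXthm}. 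What your route buys is a proof of \Lem{PHIATlem} from ingredients available in this paper --- \Def{QDdef}, \Def{PHdef}, \Lem{PHISlem} and \Thm{RLMAXthm} --- so the lemma becomes self-contained modulo \Lem{PHISlem} (itself only cited); what the paper's citation buys is brevity and consistency with the more general framework of \cite{PARTONE}, where the same fact holds for arbitrary $L$--positive sets rather than monotone graphs. Incidentally, your inequality half also follows in one line from \Thm{PHTHthm} by specializing $(y^*,y\dbs) := (x^*,\wh x)$, since $\bra{x^*}{\wh x} = \bra{x}{x^*}$; but \Thm{PHTHthm} is likewise only cited, so your direct appeal to quasidensity is the more elementary choice.
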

\begin{proof}
See \cite[Lemma 7.7, eqn.\ (33), p.\ 1034]{PARTONE}.  (${\varphi_S}^*(x^*,\wh x)$ is denoted by ${\varphi_S}^@(x,x^*)$ in \cite{PARTONE}.   See \cite[Definition 3.1]{PARTONE}.)
\end{proof}
\par
In \Thm{PHTHthm}, we show that the function ${\varphi_S}$ can be used to give a\break simple criterion for quasidensity.   We will give more criteria in \Thms{FUZZDthm}, \ref{FUZZEthm} and \ref{FPthm}, and \Rem{NIrem}.
\begin{theorem}\label{PHTHthm}
Let $S\colon\ E \toto E^*$ be maximally monotone.    Then $S$ is quasidense if, and only if, for all $(y^*,y\dbs) \in E^* \times E\dbs$, ${\varphi_S}^*(y^*,y\dbs) \ge \bra{y^*}{y\dbs}$. 
\end{theorem}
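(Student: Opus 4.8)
The plan is to convert both the quasidensity inequality and the conjugate inequality into statements about the single convex function $\varphi_S$ and its Fenchel conjugate, and to bridge them by Fenchel--Rockafellar duality with the everywhere-continuous regularizer $j(x,x^*):=\half\|x\|^2+\half\|x^*\|^2$ (whose conjugate on $E^*\times E\dbs$ is $(y^*,y\dbs)\mapsto\half\|y^*\|^2+\half\|y\dbs\|^2$). First I would record the elementary fact that, for any monotone $S$, $\varphi_S(s,s^*)=\bra s{s^*}$ whenever $(s,s^*)\in G(S)$. Expanding $\bra{s-x}{s^*-x^*}$ and using this, the bracket in \Def{QDdef} equals, for $(s,s^*)\in G(S)$,
\[
\bra x{x^*}+\varphi_S(s,s^*)+j\big((s,s^*)-(x,x^*)\big)-\Bra{(s,s^*)}{(x^*,\wh x)},
\]
since $\bra s{x^*}+\bra x{s^*}=\Bra{(s,s^*)}{(x^*,\wh x)}$. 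Thus $S$ is quasidense exactly when, for every $(x,x^*)$,
\[
\infn_{(s,s^*)\in G(S)}\big[\varphi_S(s,s^*)+j\big((s,s^*)-(x,x^*)\big)-\Bra{(s,s^*)}{(x^*,\wh x)}\big]\le-\bra x{x^*}.
\]

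Next I would introduce the convex relaxation $W(x,x^*)$ obtained by taking the infimum of the \emph{same} integrand over all of $E\times E^*$ rather than over $G(S)$. Because $G(S)\subseteq E\times E^*$, the graph infimum always dominates $W(x,x^*)$. Since $j$ is finite and continuous on $E\times E^*$, the conjugate-of-a-sum theorem applies with no constraint-qualification difficulty: strong duality holds and the dual supremum is attained, yielding
\[
W(x,x^*)=\maxn_{(y^*,y\dbs)}\big[\Bra{(x,x^*)}{(y^*,y\dbs)}-2\bra x{x^*}-{\varphi_S}^*(y^*,y\dbs)-\tfrac12\|y^*-x^*\|^2-\tfrac12\|y\dbs-\wh x\|^2\big].
\]
This is the bridge to ${\varphi_S}^*$ evaluated on the full bidual, and it is where $E\dbs$ enters.

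For the implication ``quasidense $\Rightarrow$ conjugate inequality'' I would argue by contradiction. Suppose ${\varphi_S}^*(y_0^*,y_0\dbs)<\bra{y_0^*}{y_0\dbs}$ and set $\delta:=\bra{y_0^*}{y_0\dbs}-{\varphi_S}^*(y_0^*,y_0\dbs)>0$. Combining the two displays shows the quasidensity infimum at $(x,x^*)$ is at least $\bra x{x^*}+W(x,x^*)$; bounding the dual maximum below by its value at $(y^*,y\dbs)=(y_0^*,y_0\dbs)$ and taking $x=0$, $x^*=y_0^*-p^*$, the quasidensity infimum at $(0,y_0^*-p^*)$ is at least $\delta-\big(\half\|p^*\|^2+\bra{p^*}{y_0\dbs}\big)-\half\|y_0\dbs\|^2$. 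Since $\infn_{p^*\in E^*}\big[\half\|p^*\|^2+\bra{p^*}{y_0\dbs}\big]=-\half\|y_0\dbs\|^2$, I can choose $p^*$ making this lower bound exceed $\delta/2>0$, contradicting quasidensity. The only delicacy is that $y_0\dbs$ need not attain its norm through $E^*$, which is absorbed by the slack $\delta/2$.

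The reverse implication is the hard part, and I expect it to be the main obstacle. Here one must show that the \emph{graph} infimum, not merely its convex relaxation $W$, is $\le-\bra x{x^*}$; the duality above only gives graph-infimum $\ge\bra x{x^*}+W$, which is the wrong direction. Closing this gap amounts to producing, for each $(x,x^*)$ and each $\eps>0$, a genuine near-minimizer $(s,s^*)\in G(S)$ rather than just a point of $E\times E^*$. My plan is to use that the dual optimum is attained at some $(y^*,y\dbs)\in E^*\times E\dbs$ and that the hypothesis ${\varphi_S}^*\ge\bra\cdot\cdot$ pins down its value, and then to transfer this bidual datum back to $G(S)$ by approximating $y\dbs$ in the $w(E\dbs,E^*)$ topology by canonical images $\wh s$ (Goldstine) along a bounded, hence $w(E^*,E)$-relatively compact (Banach--Alaoglu), net, using maximal monotonicity to keep the approximants on the graph. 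This pull-back through $E\dbs$ is precisely the dependence on the bidual flagged in the introduction; alternatively one invokes the general $r_L$-density characterization of \cite{PARTONE}, whose proof ultimately rests on the sum theorems.
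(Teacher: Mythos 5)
Your setup and the forward implication are essentially correct: the identity $\varphi_S=\bra{\cdot}{\cdot}$ on $G(S)$, the rewriting of the quasidensity bracket, the formula for $W$ via the conjugate-of-a-sum theorem (the continuity of $j$ does dispose of the constraint qualification), and the contradiction argument with $x=0$, $x^*=y_0^*-p^*$, where the $\delta/2$ slack absorbs the possible non-attainment of $\|y_0\dbs\|$, all check out. For calibration: the paper does not prove this theorem internally either; its entire proof is the citation of \cite[Corollary 6.4]{PARTONE}, where a more general result is established in the SN-space framework.

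The reverse implication, however, contains a genuine gap, and the repair you sketch would fail. What your strong duality actually yields under the hypothesis ${\varphi_S}^*\ge\bra{\cdot}{\cdot}$ is: for every $(x,x^*)$ and $\eps>0$ there exists $(z,z^*)\in E\times E^*$ with
\[
\big[\varphi_S(z,z^*)-\bra{z}{z^*}\big]+\big[\half\|z-x\|^2+\half\|z^*-x^*\|^2+\bra{z-x}{z^*-x^*}\big]\le\eps,
\]
both brackets being nonnegative (the first because $\varphi_S\ge\bra{\cdot}{\cdot}$ when $S$ is maximally monotone, cf.\ \Rem{PHTHrem}). The missing step is to convert such an $\eps$-Fitzpatrick point into an actual point of $G(S)$ at controlled distance --- a Br{\o}ndsted--Rockafellar-type property of $S$. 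Your Goldstine/Banach--Alaoglu plan cannot deliver this: Goldstine approximates $y\dbs$ by $\wh s$ for \emph{arbitrary} $s\in E$, with no accompanying $s^*\in S(s)$; ``using maximal monotonicity to keep the approximants on the graph'' is not an operative step, since maximal monotonicity is a non-extendability statement and manufactures no graph points near prescribed data (the tail operator of \Ex{TAILex} is maximally monotone yet not quasidense, so graph points can only be produced by a quantitative use of the conjugate hypothesis, never by maximality alone); and weak$^*$ convergence is in any case too coarse for the conclusion, because neither the norms nor the coupling term $\bra{s-x}{s^*-x^*}$ pass to the limit along weak$^*$-convergent nets --- the quasidensity functional is simply not weak$^*$ upper semicontinuous. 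Closing this gap is exactly the hard known implication (essentially type (NI) $\Rightarrow$ density, as in Marques Alves--Svaiter \cite{ASD} or \cite[Corollary 6.4]{PARTONE}), so your fallback of invoking \cite{PARTONE} does not complete your argument; it replaces it by the very citation that constitutes the paper's own proof.
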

\begin{proof}
See \cite[Corollary 6.4, p.\ 1029--1030]{PARTONE} for a more general result.
\end{proof}
\begin{remark}\label{PHTHrem}
We note that \eqref{PHIS1} is true even if $S$ is merely maximally monotone.   (This was actually proved by Fitzpatrick in \cite{FITZ}.)   On the other hand, \Thm{PHTHthm} gives a criterion for quasidensity.  
\end{remark}
\par
\Thm{RTRthm} below is a very important result.   By virtue of \Thm{RLMAXthm}, it generalizes Rockafellar's result that subdifferentials are maximally monotone.   The proof of \Lem{VANlem} uses two basic results from convex analysis.   The first of these, \Lem{BRlem}, is the Br{\o}ndsted--Rockafellar theorem, which was first proved in \cite[p.\ 608]{BRON}.   There are many variations of this result:  we will use \Cor{BRcor}, which appeared explicitly in \cite[Theorem 3.3, p.\ 1380]{CONTROLLED} (if not before).   The second result from convex analysis that we will use appears in \Lem{SUBSUMlem}.   This follows from Rockafellar's formula for the subdifferential of a sum.   See\break \cite[Theorem~3(a), pp.\ 85--86]{FENCHEL}.
\begin{lemma}\label{BRlem}
Let $h \in \PCLSC(E)$, $\inf_Eh > -\infty$, $\alpha,\beta > 0$, $u \in \dom\, h$ and $h(u) < \inf_Eh + \alpha\beta$.   Then there exists $(s,x^*) \in G(\partial h)$ such that $h(s) \le h(u)$, $\|s - u\| \le \alpha$ and $\|x^*\| \le \beta$.
\end{lemma}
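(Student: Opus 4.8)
The plan is to deduce the lemma from Ekeland's variational principle, which is applicable because $E$, being a Banach space, is complete in its norm metric, while $h$ is proper, lower semicontinuous and bounded below by hypothesis. Since $h(u) < \inf_E h + \alpha\beta$, the point $u$ is an approximate minimizer of $h$, and I would invoke Ekeland's principle with tolerance $\eps := \alpha\beta$ and penalty parameter $\lambda := \alpha$. This produces a point $s \in E$ enjoying three properties: $h(s) \le h(u)$; $\|s - u\| \le \lambda = \alpha$; and, since $\eps/\lambda = \beta$, the inequality $h(w) + \beta\|w - s\| \ge h(s)$ for all $w \in E$. The first two of these are already two of the three required conclusions, and because $h(s) \le h(u) < \infty$ while $\inf_E h > -\infty$, we have $s \in \dom h$ and $h(s) \in \RR$.

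It remains only to extract a subgradient $x^*$ of $h$ at $s$ with $\|x^*\| \le \beta$. The third property above says exactly that $s$ is a global minimizer of the function $\Phi := h + \beta\|\cdot - s\|$, so that $0 \in \partial\Phi(s)$. Since $w \mapsto \beta\|w - s\|$ is convex and finite and continuous on all of $E$, the Moreau--Rockafellar sum rule is legitimate and yields $\partial\Phi(s) = \partial h(s) + \partial\big(\beta\|\cdot - s\|\big)(s)$. A standard computation gives $\partial\big(\beta\|\cdot - s\|\big)(s) = \beta(E^*)_1$, the ball of radius $\beta$ in $E^*$. Hence $0 \in \partial h(s) + \beta(E^*)_1$, and selecting $x^* \in \partial h(s)$ with $-x^* \in \beta(E^*)_1$ gives $\|x^*\| \le \beta$ and $(s,x^*) \in G(\partial h)$, which completes the argument.

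The subdifferential manipulations in the second paragraph --- evaluating $\partial$ of a norm and applying the sum rule, valid thanks to the continuity of the penalty term --- are routine, so the entire substance of the lemma is borne by Ekeland's variational principle; that is where completeness of $E$ is used, and it is the step I would regard as the genuine obstacle were one to demand a fully self-contained proof. If instead one wished to avoid Ekeland, as in the original argument of Br{\o}ndsted and Rockafellar which predates it, the obstacle would migrate to a direct construction: one builds a Cauchy sequence of successively better approximate minimizers with geometrically shrinking slack, each produced from its predecessor by a supporting-functional argument, and takes $s$ to be its limit, completeness of $E$ once more guaranteeing convergence.
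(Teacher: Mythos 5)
Your proof is correct, but the comparison here is not with a proof in the text: the paper states \Lem{BRlem} as a known result and simply cites the original 1965 paper of Br{\o}ndsted and Rockafellar, so the relevant contrast is with that original argument. Your route --- Ekeland's variational principle with tolerance $\eps := \alpha\beta$ and penalty parameter $\lambda := \alpha$, followed by the observation that the point $s$ so produced globally minimizes $h + \beta\|\cdot - s\|$, whence $0 \in \partial h(s) + \beta E^*_1$ by the sum rule and the formula for the subdifferential of the norm at the origin --- is the standard modern proof, and every step checks out: the strong form of Ekeland's principle yields exactly $h(s) \le h(u)$, $\|s - u\| \le \alpha$ and $h(w) + \beta\|w - s\| \ge h(s)$ for all $w \in E$, and the sum rule you invoke is legitimate since the penalty term is convex and continuous everywhere; indeed it is precisely \Lem{SUBSUMlem} of the paper, so your ingredients are consistent with the paper's own toolkit. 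The one caveat is the historical one you yourself flag: Ekeland's principle postdates the Br{\o}ndsted--Rockafellar theorem, so the cited original proof cannot use it and instead runs the direct construction you sketch at the end --- an iteratively built Cauchy sequence of successively better approximate minimizers together with a supporting-functional argument, completeness of $E$ supplying the limit point. What Ekeland buys you is a clean packaging of exactly that iteration (completeness is used once, inside the principle); what the original construction buys is self-containment without appeal to a later theorem. Either way, your argument is a valid, complete replacement for the paper's citation.
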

\begin{corollary}\label{BRcor}
Let $h \in \PCLSC(E)$, $\inf_Eh > -\infty$ and $\beta > 0$.   Then there exists $(s,x^*) \in G(\partial h)$ such that $h(s) \le \inf_Eh + \beta$ and $\|x^*\| \le \beta$.
\end{corollary}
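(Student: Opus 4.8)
The plan is to obtain this directly from \Lem{BRlem} by specializing the parameter $\alpha$ and feeding in a suitable near-minimizer $u$. The key observation is that the two hypotheses of the corollary, namely $\inf_E h > -\infty$ (so that $\inf_E h \in \RR$) and $\beta > 0$, are exactly what is needed to produce a point $u \in \dom\,h$ with $h(u)$ strictly below $\inf_E h + \beta$, and then \Lem{BRlem} converts this approximate minimizer into a genuine point of $G(\partial h)$ with a controlled subgradient. I would take $\alpha = 1$, so that the product $\alpha\beta$ appearing in the hypothesis of \Lem{BRlem} is just $\beta$.

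Concretely, the steps I would carry out are as follows. First, since $\inf_E h$ is a real number and $\beta > 0$, the number $\inf_E h + \beta$ strictly exceeds $\inf_E h$, so by the definition of infimum there exists $u \in E$ with $h(u) < \inf_E h + \beta$; because $\inf_E h \in \RR$ this forces $h(u) \in \RR$, i.e.\ $u \in \dom\,h$. Second, I would apply \Lem{BRlem} with this $u$ and with $\alpha = 1$, whose hypothesis $h(u) < \inf_E h + \alpha\beta = \inf_E h + \beta$ is precisely what was just established. The lemma then yields $(s,x^*) \in G(\partial h)$ with $h(s) \le h(u)$, $\|s - u\| \le 1$ and $\|x^*\| \le \beta$. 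Third, chaining the inequalities gives $h(s) \le h(u) < \inf_E h + \beta$, hence in particular $h(s) \le \inf_E h + \beta$, together with $\|x^*\| \le \beta$, which is exactly the assertion of \Cor{BRcor}. The distance estimate $\|s - u\| \le 1$ provided by the lemma is simply discarded, as it plays no role in the statement.

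There is essentially no substantive obstacle here; the result is a clean specialization rather than a new argument. The only points requiring a modicum of care are the two places where finiteness is used: verifying that the chosen $u$ genuinely lies in $\dom\,h$ (so that the hypothesis $u \in \dom\,h$ of \Lem{BRlem} is met), and keeping the inequality $h(u) < \inf_E h + \beta$ strict, since \Lem{BRlem} requires strict inequality in its hypothesis. Both follow immediately from $\inf_E h > -\infty$ and $\beta > 0$, so the proof is short.
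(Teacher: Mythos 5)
Your proof is correct and is essentially identical to the paper's: both choose a near-minimizer $u$ and apply \Lem{BRlem} with $\alpha = 1$. Your version is in fact slightly more careful, since you insist on the strict inequality $h(u) < \inf_E h + \beta$ that the hypothesis of \Lem{BRlem} actually requires, whereas the paper states only $h(u) \le \inf_E h + \beta$.
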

\begin{proof}
We can choose $u \in E$ such that $h(u) \le \inf_Eh + \beta$.   The result follows from \Lem{BRlem} with $\alpha = 1$.  
\end{proof}
\begin{remark}\label{BRrem}
We note that \Cor{BRcor} can be put in the following form:  {\em Let $h \in \PCLSC(E)$ and $\inf_Eh > -\infty$.   Then there exists a sequence $\{(s_n,x_n^*)\}$ of element of $G(\partial h)$ such that $\big(h(s_n,x_n^*),x_n^*\big) \to (\inf_Eh,0)$.}
\end{remark}
\begin{lemma}\label{SUBSUMlem}
Let $g\colon E \to \rbar$ be proper and convex and $k\colon E \to \RR$ be convex and continuous.   Then, for all $x \in E$, $\partial(g + k)(x) = \partial g(x) + \partial k(x)$.
\end{lemma}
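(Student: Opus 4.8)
The plan is to establish the two inclusions separately. The inclusion $\partial g(x) + \partial k(x) \subseteq \partial(g+k)(x)$ is routine: if $x^* \in \partial g(x)$ and $y^* \in \partial k(x)$, then for every $z \in E$ we have $g(z) \ge g(x) + \bra{z-x}{x^*}$ and $k(z) \ge k(x) + \bra{z-x}{y^*}$, and adding these gives $(g+k)(z) \ge (g+k)(x) + \bra{z-x}{x^*+y^*}$, so that $x^*+y^* \in \partial(g+k)(x)$. This direction uses neither the continuity of $k$ nor any separation argument.

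The content of the lemma is the reverse inclusion, which is a form of the Moreau--Rockafellar theorem. Fix $w^* \in \partial(g+k)(x)$; replacing $g$ by $z \mapsto g(z) - \bra{z}{w^*}$ (which is again proper and convex, and whose subdifferential at $x$ is $\partial g(x) - w^*$) reduces matters to the case $w^* = 0$, i.e.\ to the case in which $x$ minimizes $g + k$ over $E$. In this situation $g(z) - g(x) \ge k(x) - k(z)$ for all $z \in E$. I would then work in $E \times \RR$ with the two convex sets $A := \{(z,r)\colon\ r \ge g(z) - g(x)\}$ (a vertical translate of the epigraph of $g$) and $B := \{(z,r)\colon\ r \le k(x) - k(z)\}$ (the hypograph of the continuous concave function $k(x) - k(\cdot)$). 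The minimality inequality says precisely that $A$ meets the interior of $B$ in no point.

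The key step, and the main obstacle, is the separation: because $k$ is convex and continuous on all of $E$, the set $B$ has nonempty interior in $E \times \RR$, so the Hahn--Banach separation theorem yields a nonzero $(\phi,\lambda) \in E^* \times \RR$ separating $A$ from $B$. A short argument --- using that $A$ contains upward vertical rays and that $B$ projects onto all of $E$ (as $k$ is finite everywhere) --- shows $\lambda > 0$, so we may normalize $\lambda = 1$; evaluating the two separating inequalities along the boundary curves $r = g(z)-g(x)$ and $r = k(x)-k(z)$, and at $z = x$ to pin down the additive constant, then produces, after unwinding, that $-\phi$ is a subgradient of the reduced $g$ at $x$ and that $\phi \in \partial k(x)$. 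Translating back through the reduction by $w^*$ gives $w^* - \phi \in \partial g(x)$ and $\phi \in \partial k(x)$, whence $w^* \in \partial g(x) + \partial k(x)$. The delicate points to get right are verifying $\lambda \ne 0$ (which is exactly where the continuity of $k$ is indispensable) and the bookkeeping of the translation by $w^*$.
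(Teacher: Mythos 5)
Your argument is correct, but it takes a genuinely different route from the paper, which in fact offers no proof of \Lem{SUBSUMlem} at all: the lemma is stated as a known result, with the sentence preceding it attributing it to Rockafellar's formula for the subdifferential of a sum and citing \cite[Theorem~3(a), pp.\ 85--86]{FENCHEL}. What you have written out is the classical Moreau--Rockafellar separation proof: the easy inclusion $\partial g(x) + \partial k(x) \subseteq \partial(g+k)(x)$ by adding subgradient inequalities, then the reduction to $w^* = 0$, then Eidelheit separation in $E \times \RR$ of the translated epigraph $A$ of $g - g(x)$ from the hypograph $B$ of $k(x) - k(\cdot)$. All the delicate points are handled correctly: the minimality of $x$ gives $A \cap \intr B = \emptyset$; continuity of $k$ enters exactly where you say it does (it gives $\intr B \neq \emptyset$, which both licenses the separation theorem and, together with the fact that $B$ projects onto all of $E$, rules out a vertical separating hyperplane, i.e.\ $\lambda = 0$); evaluating at $z = x$ pins the separating constant at $\bra{x}{\phi}$, after which the two inequalities read $-\phi \in \partial\big(g - w^*\big)(x)$ and $\phi \in \partial k(x)$; and, notably, lower semicontinuity of $g$ is never used, which matches the hypotheses of the lemma (proper and convex only). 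The trade-off is clear: your route makes the lemma self-contained, replacing the appeal to \cite{FENCHEL} by a Hahn--Banach argument, at the cost of a page of work; the paper's route is a one-line citation, consistent with its use of the very same Theorem~3(a) of \cite{FENCHEL} in Appendix~2 (there invoked for the conjugate of a sum and for Fenchel duality). An intermediate route, closest in spirit to the paper's citation, would be to deduce the subdifferential formula from the exact conjugate formula $(g+k)^*(x^*) = \min_{y^* \in E^*}\big[g^*(y^*) + k^*(x^* - y^*)\big]$, which is what Rockafellar's theorem actually provides.
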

\begin{lemma}\label{VANlem}
Let $g \in \PCLSC(E)$ and $\eps > 0$. Then there exists $(s,s^*) \in G(\partial g)$ such that $\half \|s\|^2+\bra{s}{s^*} + \half\|s^*\|^2 < \eps$.
\end{lemma}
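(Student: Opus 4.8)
The plan is to reduce everything to the Br{\o}ndsted--Rockafellar theorem applied to a single coercive perturbation of $g$. Write $q := \half\|\cdot\|^2$; recall that $q^* = \half\|\cdot\|^2$ on $E^*$, so that the quantity to be estimated is exactly $q(s) + \bra{s}{s^*} + q^*(s^*)$. Set $h := g + q$. Since $g \in \PCLSC(E)$ admits a continuous affine minorant, say $g \ge \bra\cdot{x_0^*} + c$, the function $h$ lies in $\PCLSC(E)$ and is coercive, so $m := \inf_E h > -\infty$. This is the function to which I would apply \Cor{BRcor}.

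Next, fix a parameter $\beta$ with $0 < \beta \le 1$ (to be chosen at the end) and apply \Cor{BRcor} to $h$: this yields $(s,u^*) \in G(\partial h)$ with $h(s) \le m + \beta$ and $\|u^*\| \le \beta$. The near-minimality $h(s) \le m + \beta$ together with the affine minorant forces $\half\|s\|^2 - \|x_0^*\|\,\|s\| + c \le m + \beta \le m+1$, and solving this quadratic inequality bounds $\|s\|$ by a constant $R$ that does not depend on $\beta$. Now \Lem{SUBSUMlem}, applied to the proper convex $g$ and the continuous convex $q$, gives $\partial h(s) = \partial g(s) + \partial q(s)$, so I may write $u^* = s^* + j$ with $s^* \in \partial g(s)$ and $j \in \partial q(s)$. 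The point $(s,s^*) \in G(\partial g)$ is the candidate; it remains to estimate $q(s) + \bra{s}{s^*} + q^*(s^*)$.

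For the final estimate, the Fenchel--Young equality for $j \in \partial q(s)$ reads $q(s) + q^*(j) = \bra{s}{j}$, i.e. $\half\|s\|^2 + \half\|j\|^2 = \bra{s}{j}$, which by the equality case of the arithmetic--geometric inequality forces $\|j\| = \|s\|$. Substituting $s^* = u^* - j$ and using this equality gives the exact identity $q(s) + \bra{s}{s^*} + q^*(s^*) = \bra{s}{u^*} + q^*(u^*-j) - q^*(j)$; then bounding $q^*(u^*-j) = \half\|u^*-j\|^2 \le \half\big(\|u^*\| + \|s\|\big)^2$ and $q^*(j) = \half\|s\|^2$ makes the two $\|s\|^2$--contributions cancel, and with $\bra{s}{u^*} \le \|s\|\,\|u^*\|$ the whole expression collapses to at most $2\|s\|\,\|u^*\| + \half\|u^*\|^2 \le 2R\beta + \half\beta^2$. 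Choosing $\beta$ small enough that $2R\beta + \half\beta^2 < \eps$ finishes the argument. I expect the one genuinely delicate point to be the control of $\half\|s^*\|^2 = q^*(s^*)$: \Cor{BRcor} bounds only $\|u^*\| = \|s^* + j\|$, not $\|s^*\|$ itself, so the argument hinges on the coercivity bound $\|s\| \le R$ (which, via $\|j\| = \|s\|$, tames $j$) together with the exact cancellation of the $\|s\|^2$ terms.
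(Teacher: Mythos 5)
Your proposal is correct and is essentially the paper's own proof: the paper also applies \Cor{BRcor} to $g+\half\|\cdot\|^2$, extracts a $\beta$-independent bound on $\|s\|$ from the affine minorant, splits the subgradient via \Lem{SUBSUMlem}, and uses the Fenchel--Young equality (forcing $\|j\|=\|s\|$) to cancel the $\half\|s\|^2$ terms, arriving at the same bound $2M\beta+\half\beta^2<\eps$. The only cosmetic difference is that the paper fixes $\beta$ before invoking Br{\o}ndsted--Rockafellar, whereas you choose it at the end, which is legitimate precisely because your constant $R$ does not depend on $\beta$.
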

\begin{proof} It is well known (from a separation theorem in $E \times E^*$) that $g$ dominates a continuous affine function.   Thus there exist $\gamma_0,\delta_0 \in \RR$ such that, for all $x \in E$, $g(x) \ge - \gamma_0\|x\| - \delta_0$.   Let $j(x) := \half\|x\|^2$.   Then, for all $x\in E$,
\begin{equation}\label{VAN1}
(g + j)(x) \ge \half\|x\|^2 - \gamma_0\|x\| - \delta_0 \ge \minn_{\lambda\in\RR}\big[\half\lambda^2 - \gamma_0\lambda - \delta_0\big] \in \RR. 
\end{equation}
Let $m:=\inf_E(g + j) > -\infty$.   By completing the square, there exists $M \in [\,0,\infty[\,$ such that
\begin{equation}\label{VAN2}
\half\lambda^2 - \gamma_0\lambda - \delta_0 \le m + 1 \qlr \lambda \le M.
\end{equation}
Choose $\beta \in \,]0,1]$ such that $2M\beta + \half\beta^2 < \eps$. From \Cor{BRcor}, there exists $(s,x^*) \in G(\partial(g + j))$ such $(g + j)(s) \le m + \beta \le m + 1$, and $\|x^*\|\le \beta$. \eqref{VAN1} implies that $\half\|s\|^2 - \gamma_0\|s\| - \delta_0 \le m + 1$, and so \eqref{VAN2} gives
\begin{equation}\label{VAN4}
\|s\| \le M.
\end{equation}
From \Lem{SUBSUMlem}, $\partial (g + j)(s) = \partial g(s) + \partial j(s)$, so there exists $s^*\in\partial g(s)$ such that $x^* - s^* \in \partial j(s)$, from which 
\begin{equation*}
\half\|s\|^2 = \bra{s}{x^* - s^*} - \half\|s^* - x^*\|^2 \quand  \|s^* - x^*\| = \|s\|.
\end{equation*}
Thus, since $\|x^*\|\le \beta$, $\|s^*\| \le \|s\| + \beta$, and so   
\begin{align*}
\half\|s\|^2 + \bra{s}{s^*} + \half\|s^*\|^2
&= \bra{s}{x^*} - \half\|s^* - x^*\|^2 + \half\|s^*\|^2\\
&\le \|s\|\beta - \half\|s\|^2 + \half(\|s\| + \beta)^2 \le 2\|s\|\beta + \half\beta^2.
\end{align*}
From \eqref{VAN4}, $\half\|s\|^2 + \bra{s}{s^*} + \half\|s^*\|^2 \le 2M\beta + \half\beta^2 < \eps$.
\end{proof}
\begin{theorem}\label{RTRthm}
Let $f \in \PCLSC(E)$.   Then  $\partial f$ is closed, monotone and\break quasidense.
\end{theorem}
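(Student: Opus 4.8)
The plan is to dispatch the three assertions separately, with the real work concentrated in quasidensity. Closedness and monotonicity are the classical facts about subdifferentials. For closedness, I would use the characterization $G(\partial f) = \big\{(x,x^*)\colon f(x) + f^*(x^*) = \bra{x}{x^*}\big\}$; since the Fenchel--Young inequality gives $f(x) + f^*(x^*) \ge \bra{x}{x^*}$ everywhere, this graph equals the sublevel set $\big\{(x,x^*)\colon f(x) + f^*(x^*) \le \bra{x}{x^*}\big\}$, which is closed because $f$ and $f^*$ are lower semicontinuous and the pairing is continuous. For monotonicity I would take $(x,x^*),(y,y^*) \in G(\partial f)$, write the two subgradient inequalities $f(y) \ge f(x) + \bra{y-x}{x^*}$ and $f(x) \ge f(y) + \bra{x-y}{y^*}$, and add them to obtain $\bra{y-x}{y^*-x^*} \ge 0$.

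The crux is quasidensity, and the key observation is that \Lem{VANlem} is precisely the statement of quasidensity of a subdifferential \emph{at the origin}: for every $\eps>0$ it produces $(s,s^*) \in G(\partial g)$ with $\half\|s\|^2 + \bra{s}{s^*} + \half\|s^*\|^2 < \eps$, which is exactly the infimum in \Def{QDdef} evaluated at $(x,x^*)=(0,0)$. So the whole problem reduces to transporting this origin statement to an arbitrary target point by a suitable reduction.

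Given an arbitrary $(x,x^*) \in E \times E^*$, the plan is to apply \Lem{VANlem} not to $f$ itself but to the shifted function $g(y) := f(y+x) - \bra{y}{x^*}$. One checks that $g \in \PCLSC(E)$ and, directly from the definition of the subdifferential, that $y^* \in \partial g(y)$ if and only if $y^* + x^* \in \partial f(y+x)$; equivalently, $(s,s^*) \in G(\partial g)$ exactly when $(s+x,\,s^*+x^*) \in G(\partial f)$, so that $G(\partial g)$ is the translate of $G(\partial f)$ by $-(x,x^*)$. Feeding $g$ into \Lem{VANlem} yields, for each $\eps>0$, a point $(s,s^*) \in G(\partial g)$ with $\half\|s\|^2 + \bra{s}{s^*} + \half\|s^*\|^2 < \eps$; setting $(t,t^*) := (s+x,\,s^*+x^*) \in G(\partial f)$ and substituting $s = t - x$, $s^* = t^* - x^*$ turns this into $\half\|t-x\|^2 + \bra{t-x}{t^*-x^*} + \half\|t^*-x^*\|^2 < \eps$. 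Since $\eps>0$ is arbitrary, the infimum in \Def{QDdef} at $(x,x^*)$ is $\le 0$, which is exactly quasidensity.

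The only genuine obstacle is already handled upstream in \Lem{VANlem}, whose proof combines the separation-theorem lower bound, the Br{\o}ndsted--Rockafellar corollary \Cor{BRcor}, and the sum formula \Lem{SUBSUMlem}. Granting that lemma, the remaining difficulty is purely bookkeeping: verifying that the shift $g(y)=f(y+x)-\bra{y}{x^*}$ really translates the graph of the subdifferential as claimed, so that the origin estimate transfers verbatim to $(x,x^*)$. I expect no surprises there.
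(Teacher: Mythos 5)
Your proposal is correct and follows essentially the same route as the paper: the paper's proof of quasidensity is exactly your reduction, applying \Lem{VANlem} to $g := f(\cdot + x) - x^*$ and using $G(\partial g) = G(\partial f) - (x,x^*)$. The paper leaves the (classical) closedness and monotonicity of $\partial f$ unproved, and your verifications of those two facts are standard and correct.
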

\begin{proof}
Let $(x,x^*) \in E \times E^*$.   We apply \Lem{VANlem} to the function\break $g := f(\cdot + x) - x^*$, and the result follows since $G(\partial g) = G(\partial f) - (x,x^*)$.
\end{proof} 
\begin{remark}\label{RTRrem}
Another proof of \Thm{RTRthm} can be found in \cite[Theorem 7.5, p.\ 1033]{PARTONE}, using \cite[Theorem 5.2]{PARTONE} and \cite[Corollary 4.5]{PARTONE}.   The proof given here is a simplified version of that given in \cite[Theorem 8.4]{V1}.   This result was extended to nonconvex functions in \cite[Theorem 3.2, pp.\ 634--635]{SW} (using an appropriate definition of subdifferential for a nonconvex function).  
\end{remark}
We now give two simple but significant applications of \Thm{RTRthm}. 
\begin{example}\label{DNORMALex}
Let $\KT$ be a nonempty $w(E^*,E)$--compact convex subset of $E^*$.   We define the continuous sublinear functional $\tauKT$ on $E$ by $\tauKT := \max\Bra{\cdot}{\KT}$.   From \Thm{RTRthm},
\begin{equation}\label{DNORMAL2}
\ptKT\hbox{ is closed, monotone and quasidense.}
\end{equation}
By direct computation, $\tauKT^* = \IKT$, where $\IKT(x^*) = 0$ if $x^* \in \KT$ and $\IKT(x^*) = \infty$ if $x^* \in E^* \setminus \KT$, and so
\begin{equation}\label{DNORMAL1}
\left.
\begin{aligned}
x^* \in \ptKT(x)
&\iff \tauKT(x) + \IKT(x^*) = \bra{x}{x^*}\\
&\iff x^* \in \KT\hbox{ and }\bra{x}{x^*} = \max\Bra{x}{\KT}.
\end{aligned}
\right\}
\end{equation}
Using the $w(E^*,E)$--compactness of $\KT$, this implies that $D(\ptKT) = E$ and $R(\ptKT) \subset \KT$.   On the other hand, if $x^* \in \KT$ then $x^* \in \ptKT(0)$. To sum up:
\begin{equation}\label{DNORMAL3}
D(\ptKT) = E\quad\hbox{and}\quad R(\ptKT) = \KT.  
\end{equation}
\end{example}
\begin{example}\label{NORMALex}
Let $K$ be a nonempty bounded closed convex subset of $E$ and $\I_K(x) = 0$ if $x \in K$ and $\I_K(x) = \infty$ if $x \in E \setminus K$.   From \Thm{RTRthm},
\begin{equation}\label{NORMAL3}
\pIK\hbox{ is closed, monotone and quasidense.}
\end{equation}
We define the continuous, sublinear functional $\sigma_K$ on $E^*$ by $\sigma_K := \sup\bra{K}{\cdot}$.   By direct computation, ${\I_K}^* = \sigma_K$, and so
\begin{equation}\label{NORMAL1}
\left.\begin{aligned}
x^* \in \pIK(x)
&\iff \I_K(x) + \sigma_K(x^*) = \bra{x}{x^*}\\
&\iff x \in K\ \hbox{and}\ \bra{x}{x^*} = \sup\bra{K}{x^*}.
\end{aligned}\right\}
\end{equation}
($\pIK$ is the {\em normal cone multifunction} of $K$.)   \eqref{NORMAL1} clearly implies that\break $D(\pIK) \subset K$.   On the other hand, if $x \in K$ then $0 \in \pIK(x)$.   Consequently,
\begin{equation}\label{NORMAL2}
D(\pIK) = K.
\end{equation}
Now suppose, in addition, that $K$ is $w(E,E^*)$--compact.   From \eqref{NORMAL1},
\begin{equation}\label{NORMAL5}
R(\pIK) = E^*.
\end{equation}
Let $\wh K := \{\wh x\colon x \in K\}$. It is easy to see that   
$\wh K$ is $w(E\dbs,E^*)$--closed, from which
\begin{equation}\label{NORMAL4}
{\sigma_K}^* = \I_{\wh K}.
\end{equation}
\end{example}
\section{The Fitzpatrick extension}\label{FITZDEFsec}
\begin{definition}[The Fitzpatrick extension]\label{FITZdef}  (See \cite[Definition 8.5, p.\ 1037]{PARTONE}.)   Let the notation be as in \Sec{BANsec} and $S\colon\ E \toto E^*$ be closed, monotone and quasidense.  From \Thm{PHTHthm}, for all $(y^*,y\dbs) \in E^* \times E\dbs$, ${\varphi_S}^*(y^*,y\dbs) \ge \bra{y^*}{y\dbs}$. 
We define $S^\F\colon\ E^* \toto E\dbs$ so that
\begin{equation}\label{FITZ1}
(y^*,y\dbs) \in G(S^\F) \hbox{ exactly when } {\varphi_S}^*(y^*,y\dbs) = \bra{y^*}{y\dbs}.
\end{equation}
It is easily seen that $S^\F$ is monotone.  Then, from \Lem{PHIATlem},
\begin{equation}\label{FITZ2}
(x,x^*) \in G(S) \iff (x^*,\wh x) \in G(S^\F),
\end{equation}
and so $S^\F$ is, in some sense, an extension of $S$.   We will describe $S^\F$ as the {\em Fitzpatrick extension of $S$}.   (We note that $\Phi_{G(S)} = \varphi_S$ in \cite{PARTONE}.)   
\end{definition}
\par
Following the notation introduced in \cite[Example 7.1, p.\ 1031]{PARTONE}, we define the map $L\colon\ E \times E^* \to E^* \times E\dbs$ by $L(x,x^*) := (x^*,\wh{x})$.
\begin{lemma}\label{HATlem}
Let $S\colon\ E \toto E^*$ be closed, monotone and quasidense and\break $R(S^\F) \subset \wh E := \{\wh x\colon x \in E\}$.   Then $G(S^\F) = L\big(G(S)\big)$.
\end{lemma}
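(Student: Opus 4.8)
The plan is to prove the two inclusions separately, with the equivalence \eqref{FITZ2} carrying essentially all of the weight. The inclusion $L\big(G(S)\big) \subset G(S^\F)$ holds with no use of the extra hypothesis: if $(x,x^*) \in G(S)$ then the forward direction of \eqref{FITZ2} gives $L(x,x^*) = (x^*,\wh x) \in G(S^\F)$, so $L\big(G(S)\big) \subset G(S^\F)$ at once. This direction is just the statement that $S^\F$ extends $S$ through the map $L$.

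For the reverse inclusion $G(S^\F) \subset L\big(G(S)\big)$, I would take an arbitrary $(y^*,y\dbs) \in G(S^\F)$ and invoke the added assumption $R(S^\F) \subset \wh E$. Since $y\dbs \in R(S^\F)$, this hypothesis forces $y\dbs \in \wh E$, so $y\dbs = \wh x$ for some $x \in E$. Putting $x^* := y^*$, we then have $(x^*,\wh x) = (y^*,y\dbs) \in G(S^\F)$, and the reverse direction of \eqref{FITZ2} yields $(x,x^*) \in G(S)$. Hence $(y^*,y\dbs) = (x^*,\wh x) = L(x,x^*) \in L\big(G(S)\big)$, which establishes the second inclusion and therefore the desired equality.

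The whole argument is a direct unpacking of \eqref{FITZ2}, and I do not anticipate any genuine obstacle. The one point worth isolating is the role of the range condition: $R(S^\F) \subset \wh E$ is precisely what guarantees that the second coordinate of every point of $G(S^\F)$ lies in the image of the canonical embedding, so that it can be written as $\wh x$ and fed into \eqref{FITZ2}. (That $x \mapsto \wh x$ is injective, being an isometry, makes this $x$ unique, but uniqueness is not actually needed for the set equality.) In short, the hypothesis converts the one-directional extension property recorded in \eqref{FITZ2} into the onto-type statement required for $G(S^\F) \subset L\big(G(S)\big)$, and the two inclusions together give $G(S^\F) = L\big(G(S)\big)$.
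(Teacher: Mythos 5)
Your proof is correct and follows essentially the same route as the paper: both inclusions are obtained by unpacking \eqref{FITZ2}, with the hypothesis $R(S^\F) \subset \wh E$ used only to write the second coordinate of a point of $G(S^\F)$ as $\wh x$ before applying the reverse direction of \eqref{FITZ2}. Your remark that the range condition is exactly what makes the one-directional extension property surjective onto $G(S^\F)$ is a fair summary of why the lemma works.
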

\begin{proof}
If $(y^*,y\dbs) \in G(S^\F)$ then $y\dbs \in R(S^\F) \subset \wh E$, and so there exists $x \in E$ such that $\wh x = y\dbs$.   Thus $(y^*,\wh x) \in G(S^\F)$.   From \eqref{FITZ2}, $(x,y^*) \in G(S)$.   Since $(y^*,y\dbs) = (y^*,\wh x) = L(x,y^*)$, this establishes that   $G(S^\F) \subset L\big(G(S)\big)$.   On the other hand, if $(y^*,y\dbs) \in L\big(G(S)\big)$ then there exists $(x,x^*) \in G(S)$ such that $(y^*,y\dbs) = (x^*,\wh x)$.   From \eqref{FITZ2}, $(x^*,\wh x) \in G(S^\F)$, that is to say $(y^*,y\dbs) \in G(S^\F)$.   Thus we have proved that $L\big(G(S)\big) \subset G(S^\F)$.
\end{proof}
\begin{lemma}\label{FITZDIFFlem}
Let $f \in \PCLSC(E)$.   Then $G\big((\partial f)^\F\big) \subset G\big(\partial(f^*)\big)$.
\end{lemma}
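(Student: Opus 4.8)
The plan is to realize both graphs as the sets where a convex function on $E^* \times E\dbs$ meets a common linear lower bound, and then to squeeze. Recall from \Thm{RTRthm} that $\partial f$ is closed, monotone and quasidense, so $(\partial f)^\F$ is well defined, and by \eqref{FITZ1} a point $(y^*,y\dbs)$ lies in $G\big((\partial f)^\F\big)$ exactly when ${\varphi_{\partial f}}^*(y^*,y\dbs) = \bra{y^*}{y\dbs}$. On the other hand, writing $f\dbs = (f^*)^*$ for the conjugate of $f^*$ on $E\dbs$ (which is legitimate since $f \in \PCLSC(E)$ forces $f^* \in \PCLSC(E^*)$), the point $(y^*,y\dbs)$ lies in $G\big(\partial(f^*)\big)$ exactly when $f^*(y^*) + f\dbs(y\dbs) = \bra{y^*}{y\dbs}$. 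So it suffices to interpose the function $(y^*,y\dbs) \mapsto f^*(y^*) + f\dbs(y\dbs)$ between $\bra{y^*}{y\dbs}$ and ${\varphi_{\partial f}}^*(y^*,y\dbs)$.

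The first, and central, step is the pointwise estimate ${\varphi_{\partial f}}(x,x^*) \le f(x) + f^*(x^*)$ for all $(x,x^*) \in E \times E^*$. To see this, I fix $(s,s^*) \in G(\partial f)$; then $\bra{s}{s^*} = f(s) + f^*(s^*)$, so the bracketed term in \Def{PHdef} equals $\big[\bra{s}{x^*} - f(s)\big] + \big[\bra{x}{s^*} - f^*(s^*)\big]$, and two applications of the Fenchel--Young inequality bound this above by $f^*(x^*) + f(x)$. Taking the supremum over $G(\partial f)$ gives the estimate. Conjugating reverses the inequality, and since the conjugate of the separable sum $(x,x^*) \mapsto f(x) + f^*(x^*)$ with respect to the pairing of \Sec{BANsec} is the separable sum $(y^*,y\dbs) \mapsto f^*(y^*) + f\dbs(y\dbs)$, I obtain ${\varphi_{\partial f}}^*(y^*,y\dbs) \ge f^*(y^*) + f\dbs(y\dbs)$ for all $(y^*,y\dbs)$.

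The second step is the Fenchel--Young inequality for the pair $f^*$ and $f\dbs = (f^*)^*$, namely $f^*(y^*) + f\dbs(y\dbs) \ge \bra{y^*}{y\dbs}$. Combining the two steps yields the sandwich $\bra{y^*}{y\dbs} \le f^*(y^*) + f\dbs(y\dbs) \le {\varphi_{\partial f}}^*(y^*,y\dbs)$. Now if $(y^*,y\dbs) \in G\big((\partial f)^\F\big)$ then, by \eqref{FITZ1}, the two outer terms coincide, forcing both inequalities to be equalities; in particular $f^*(y^*) + f\dbs(y\dbs) = \bra{y^*}{y\dbs}$, which is exactly the statement that $(y^*,y\dbs) \in G\big(\partial(f^*)\big)$. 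The only points needing care are the bookkeeping identity $f\dbs = (f^*)^*$ and the use of the correct dual pairing on $E^* \times E\dbs$ when conjugating; the genuine content is the single Fenchel--Young estimate bounding ${\varphi_{\partial f}}$ from above, everything else being a formal squeeze.
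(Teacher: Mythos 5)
Your proof is correct and follows essentially the same route as the paper's: the paper likewise uses the subdifferential identity $\bra{s}{s^*} = f(s) + f^*(s^*)$ plus Fenchel--Young to get $\varphi_{\partial f} \le f \oplus f^*$, conjugates to obtain ${\varphi_{\partial f}}^*(y^*,y\dbs) \ge f^*(y^*) + f\dbs(y\dbs) \ge \bra{y^*}{y\dbs}$, and then invokes \eqref{FITZ1} to force equality throughout. The only cosmetic difference is that you make the squeeze and the bookkeeping identity $f\dbs = (f^*)^*$ explicit, which the paper leaves implicit.
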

\begin{proof}
Let $(x,x^*) \in E \times E^*$.   Then, from \Def{PHdef}, the definition of $\partial f$ and the Fenchel--Young inequality,
\begin{align*}
\varphi_{\partial f}(x,x^*)
&= \supn_{(s,s^*) \in G(\partial f)}\big[\bra{x}{s^*} + \bra{s}{x^*} - \bra{s}{s^*}\big]\\
&= \supn_{(s,s^*) \in G(\partial f)}\big[\bra{x}{s^*} + \bra{s}{x^*} - f(s) - f^*(s^*)\big]\\
&\le \supn_{s^* \in E^*}\big[\bra{x}{s^*} - f^*(s^*)] +  \supn_{s \in E}\big[\bra{s}{x^*} - f(s)\big]\\
&\le f(x) + f^*(x^*).
\end{align*}
Consequently, for all $(y^*,y\dbs) \in E^* \times E\dbs$,
\begin{align*}
{\varphi_{\partial f}}^*(y^*,y\dbs)
&= \supn_{(x,x^*) \in E \times E^*}\big[\bra{x}{y^*} + \bra{x^*}{y\dbs} - \varphi_{\partial f}(x,x^*)\big]\\
&\ge \supn_{(x,x^*) \in E \times E^*}\big[\bra{x}{y^*} + \bra{x^*}{y\dbs} - f(x) - f^*(x^*)\big]\\
&= \supn_{x \in E}\big[\bra{x}{y^*} - f(x)\big] + \supn_{x^* \in E^*}\big[\bra{x^*}{y\dbs} - f^*(x^*)\big]\\
&= f^*(y^*) + f\dbs(y\dbs) \ge \bra{y^*}{y\dbs}.
\end{align*}
The result now follows from \eqref{FITZ1}.  
\end{proof}
\begin{lemma}\label{FITZPARTAUlem}
Let $\KT$ be a nonempty $w(E^*,E)$--compact convex subset of $E^*$ and $y\dbs \in {\partial\tauKT}^\F(y^*)$.   Then $y^* \in \KT\hbox{ and }\bra{y^*}{y\dbs} = \sup\Bra{\KT}{y\dbs}$.
\end{lemma}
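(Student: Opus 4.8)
The plan is to obtain both conclusions from \Lem{FITZDIFFlem} applied to the single function $f := \tauKT$, together with the conjugate computations recorded in \Ex{DNORMALex}. First I would note that $\tauKT$ is a continuous sublinear functional on $E$, hence $\tauKT \in \PCLSC(E)$, so \Lem{FITZDIFFlem} applies and gives $G\big((\partial\tauKT)^\F\big) \subset G\big(\partial(\tauKT^*)\big)$. By the direct computation in \Ex{DNORMALex} we have $\tauKT^* = \IKT$, so the hypothesis $y\dbs \in (\partial\tauKT)^\F(y^*)$ translates into $y\dbs \in \partial\IKT(y^*)$. The whole lemma thus reduces to reading off what this single subdifferential membership says.

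Next I would unpack $y\dbs \in \partial\IKT(y^*)$ using the definition of the subdifferential from \Sec{BANsec}, now with the roles of $E$ played by $E^*$, so that $\IKT \in \PCLSC(E^*)$ and $\partial\IKT\colon E^* \toto (E^*)^* = E\dbs$. The membership is then equivalent to $\IKT(y^*) + (\IKT)^*(y\dbs) = \bra{y^*}{y\dbs}$. For the left-hand side to be finite we must have $y^* \in \dom\,\IKT = \KT$, which is the first assertion, and then $\IKT(y^*) = 0$. A routine conjugate computation gives, for $y\dbs \in E\dbs$,
\begin{equation*}
(\IKT)^*(y\dbs) = \supn_{x^* \in E^*}\big[\bra{x^*}{y\dbs} - \IKT(x^*)\big] = \supn_{x^* \in \KT}\bra{x^*}{y\dbs} = \sup\Bra{\KT}{y\dbs}.
\end{equation*}
Substituting $\IKT(y^*) = 0$ and this value into the subdifferential equality yields $\bra{y^*}{y\dbs} = \sup\Bra{\KT}{y\dbs}$, which is the second assertion.

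Since each step is either a direct application of \Lem{FITZDIFFlem} or an elementary conjugate/subdifferential computation, I do not expect a substantive obstacle. The only point meriting a little care is the bookkeeping of the pairings: one must verify that the dual pairing $\bra{y^*}{y\dbs}$ arising from the definition of $\partial\IKT$ (between $E^*$ and $(E^*)^* = E\dbs$) coincides with the pairing appearing in the statement under the identification $(E \times E^*)^* = E^* \times E\dbs$ fixed in \Sec{BANsec}, so that the equality produced by the subdifferential condition is exactly the one we wish to prove.
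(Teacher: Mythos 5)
Your proof is correct and follows essentially the same route as the paper's: apply \Lem{FITZDIFFlem} with $f := \tauKT$, use ${\tauKT}^* = \IKT$ from \Ex{DNORMALex} to get $y\dbs \in \partial\IKT(y^*)$, and read off both conclusions from the equality $\IKT(y^*) + {\IKT}^*(y\dbs) = \bra{y^*}{y\dbs}$. The only difference is that you spell out the finiteness argument and the computation ${\IKT}^*(y\dbs) = \sup\Bra{\KT}{y\dbs}$, which the paper leaves implicit in the phrase ``this gives the desired result.''
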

\begin{proof}
As we observed in \Ex{DNORMALex}, ${\tauKT}^* = \I_\KT$ and so, from \Lem{FITZDIFFlem}, $y\dbs \in \partial\I_\KT(y^*)$, that is to say $\I_\KT(y^*) + {\I_\KT}^*(y\dbs) = \bra{y^*}{y\dbs}$.
This gives the desired result.
\end{proof}
\begin{lemma}\label{PHINKlem}
Let $K$ be a nonempty $w(E,E^*)$--compact convex subset of $E$.
\smallbreak\noindent
{\rm(a)}\enspace Let $y\dbs \in {\pIK}^\F(y^*)$.   Then $y\dbs \in \wh K\hbox{ and }\bra{y^*}{y\dbs} = \sup\bra{K}{y^*}$.
\smallbreak\noindent
{\rm(b)}\enspace $R({\pIK}^\F) \subset \wh K \subset \wh E$.
\end{lemma}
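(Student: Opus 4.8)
The plan is to run the argument of \Lem{FITZPARTAUlem} with the roles of the support-function pair interchanged: there one used ${\tauKT}^* = \I_\KT$, and here I will use ${\I_K}^* = \sigma_K$, which was recorded in \Ex{NORMALex}. Applying \Lem{FITZDIFFlem} to $f = \I_K$ (so that $f^* = \sigma_K$) gives $G\big((\pIK)^\F\big) \subset G(\partial\sigma_K)$. Hence, to prove (a), it is enough to show that if $y\dbs \in \partial\sigma_K(y^*)$ then $y\dbs \in \wh K$ and $\bra{y^*}{y\dbs} = \sup\bra{K}{y^*}$.

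For this I would first write out $y\dbs \in \partial\sigma_K(y^*)$ as the Fenchel--Young equality. Since $\sigma_K \in \PCLSC(E^*)$, its subdifferential maps $E^*$ into $(E^*)^* = E\dbs$, and membership reads $\sigma_K(y^*) + {\sigma_K}^*(y\dbs) = \bra{y^*}{y\dbs}$. The decisive substitution is \eqref{NORMAL4}, namely ${\sigma_K}^* = \I_{\wh K}$; plugging this in, the finiteness of the right-hand side forces $\I_{\wh K}(y\dbs) < \infty$, i.e.\ $y\dbs \in \wh K$, and then $\I_{\wh K}(y\dbs) = 0$ leaves $\sigma_K(y^*) = \bra{y^*}{y\dbs}$. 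Since $\sigma_K(y^*) = \sup\bra{K}{y^*}$ by definition, part (a) follows. Part (b) is then immediate: any $y\dbs \in R\big((\pIK)^\F\big)$ lies in $(\pIK)^\F(y^*)$ for some $y^*$, hence in $\wh K$ by (a), and $\wh K \subset \wh E$ because $K \subset E$.

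The one place that carries real content is the appeal to \eqref{NORMAL4}, and this is exactly where the $w(E,E^*)$--compactness hypothesis on $K$ is consumed: compactness makes $\wh K$ a $w(E\dbs,E^*)$--closed convex set, so that ${\sigma_K}^*$ is the indicator of $\wh K$ itself rather than of its $w(E\dbs,E^*)$--closed convex hull. Beyond correctly invoking that identity, no step should present difficulty, since everything parallels the bookkeeping already carried out for \Lem{FITZPARTAUlem}.
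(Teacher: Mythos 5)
Your proposal is correct and follows essentially the same route as the paper: apply \Lem{FITZDIFFlem} to $f = \I_K$ (using ${\I_K}^* = \sigma_K$ from \Ex{NORMALex}) to get $y\dbs \in \partial\sigma_K(y^*)$, rewrite this as the Fenchel--Young equality, and invoke \eqref{NORMAL4} to conclude $y\dbs \in \wh K$ and $\bra{y^*}{y\dbs} = \sup\bra{K}{y^*}$, with (b) immediate from (a). Your remark that the $w(E,E^*)$--compactness of $K$ is consumed precisely in \eqref{NORMAL4} is also an accurate reading of where the hypothesis enters.
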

\begin{proof}
(a)\enspace As we observed in \Ex{NORMALex}, ${\I_K}^* = \sigma_K$ and so, from \Lem{FITZDIFFlem}, $y\dbs \in \partial\sigma_K(y^*)$, that is to say, $\sigma_K(y^*) + {\sigma_K}^*(y\dbs) = \bra{y^*}{y\dbs}$.   From \eqref{NORMAL4}, $\sigma_K(y^*) + \I_{\wh K}(y\dbs) = \bra{y^*}{y\dbs}$, which gives (a).   (b) is immediate from (a).   
\end{proof}
\begin{theorem}\label{AFMAXthm}
Let $S\colon\ E \toto E^*$ be closed, monotone and quasidense.   Then $S^\F\colon E^* \toto E\dbs$ is maximally monotone.
\end{theorem}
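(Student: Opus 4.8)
The plan is to treat $S^\F$ as a closed, monotone multifunction from the Banach space $E^*$ into its dual $E\dbs$, and to derive its maximality from the same quasidensity mechanism that powers \Thm{RLMAXthm}, but applied with $E$ replaced by $E^*$. The first step is to observe that ${\varphi_S}^*$ plays for $S^\F$ on $E^* \times E\dbs$ exactly the role that $\varphi_S$ plays for $S$ on $E \times E^*$: by \Thm{PHTHthm} (this is where quasidensity of $S$ enters) we have ${\varphi_S}^*(y^*,y\dbs) \ge \bra{y^*}{y\dbs}$ for all $(y^*,y\dbs) \in E^* \times E\dbs$, and by \eqref{FITZ1} the set where equality holds is precisely $G(S^\F)$. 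Since ${\varphi_S}^*$ is convex and lower semicontinuous while $(y^*,y\dbs) \mapsto \bra{y^*}{y\dbs}$ is norm continuous, $G(S^\F) = \big\{(y^*,y\dbs)\colon {\varphi_S}^*(y^*,y\dbs) \le \bra{y^*}{y\dbs}\big\}$ is norm closed; combined with the monotonicity recorded in \Def{FITZdef}, this shows $S^\F$ is closed and monotone.

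It then remains to prove that $S^\F$ is quasidense, i.e. that for every $(y^*,y\dbs) \in E^* \times E\dbs$ the infimum of $\half\|b^* - y^*\|^2 + \half\|b\dbs - y\dbs\|^2 + \bra{b^* - y^*}{b\dbs - y\dbs}$ over $(b^*,b\dbs) \in G(S^\F)$ is $\le 0$; \Thm{RLMAXthm}, applied to $E^*$, would then finish the proof. The structure makes this transparent on the canonical image. By \eqref{FITZ2} we have $L(G(S)) \subseteq G(S^\F)$ with $L(x,x^*) = (x^*,\wh x)$ isometric, so for a target $(y^*,\wh y)$ with $y \in E$, minimizing over the subfamily $L(G(S))$ reproduces verbatim the quasidensity infimum for $S$ at $(y,y^*)$, which is $\le 0$ by hypothesis. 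In the dual picture the same fact reads: the biconjugate ${\varphi_S}^{**}$, which restricts to $\varphi_S$ on the canonical image and there satisfies $\varphi_S \ge \bra\cdot\cdot$ by \Lem{PHISlem}, dominates the pairing. Thus the claim is immediate whenever the $E\dbs$-coordinate lies in $\wh E$.

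The hard part is to remove the restriction to $\wh E$. For a general $y\dbs \in E\dbs$ the subfamily $L(G(S))$ no longer suffices, because $G(S^\F)$ genuinely contains points whose $E\dbs$-coordinate leaves $\wh E$ (the case where the hypothesis $R(S^\F) \subseteq \wh E$ of \Lem{HATlem} fails), and the infimum must be computed over all of $G(S^\F)$; equivalently, the domination of the pairing by ${\varphi_S}^{**}$ must be upgraded from the canonical image to the entire bidual. The squared-norm terms are the obstruction: they are not $w(E\dbs,E^*)$-continuous, so one cannot transfer the inequality from $\wh E$ to $E\dbs$ by Goldstine density alone. This is exactly the point at which the bidual must be confronted, and I expect to resolve it by appealing to the more general density--conjugacy result of \cite{PARTONE} underlying \Thm{PHTHthm} (its Corollary~6.4), which delivers the quasidensity of $S^\F$ on all of $E^* \times E\dbs$; \Thm{RLMAXthm} then yields that $S^\F$ is maximally monotone.
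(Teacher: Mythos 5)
Your preliminary steps are sound: $G(S^\F)$ is norm closed \big(since ${\varphi_S}^*$ is lower semicontinuous and the pairing is continuous, the equality set in \eqref{FITZ1} is the sublevel set of ${\varphi_S}^* - \bra\cdot\cdot$\big), $S^\F$ is monotone, and the quasidensity inequality for $S^\F$ does hold at every point whose second coordinate lies in $\wh E$, by passing to the isometric copy $L\big(G(S)\big) \subset G(S^\F)$. The proposal fails at exactly the step you defer: quasidensity of $S^\F$ is \emph{not} delivered by \Thm{PHTHthm} or by \cite[Corollary 6.4]{PARTONE}, and in fact \Rem{FGrem} of this paper states explicitly that it is not known whether $S^\F$ is quasidense in the setting of \Thm{AFMAXthm} (it is known only in special cases such as $S = \partial f$, via \Thm{FITZDIFFthm}). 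There is a confusion of duality levels here: \Thm{PHTHthm} characterizes quasidensity of $S$ by the inequality ${\varphi_S}^* \ge \bra\cdot\cdot$ on $E^* \times E\dbs$, which is precisely the inequality used to \emph{define} $S^\F$; quasidensity of $S^\F$, a multifunction from the Banach space $E^*$ into its dual $E\dbs$, would by the same criterion be an inequality for a conjugate function on $E\dbs \times E\trs$, one level higher, and nothing in the paper supplies it. (Moreover, \Thm{PHTHthm} as stated applies to maximally monotone multifunctions, so invoking it for $S^\F$ would presuppose the very conclusion you are trying to prove.) Consequently your reduction to \Thm{RLMAXthm} over $E^*$ cannot be completed with the available results.

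The paper's own argument (sketched in Appendix 1, \Sec{APPsec}) proves maximality directly and bypasses quasidensity of $S^\F$ altogether. Suppose $(w^*,w\dbs)$ is monotonically related to $G(S^\F)$. Since $L\big(G(S)\big) \subset G(S^\F)$ by \eqref{FITZ2}, this gives $\infn_{(t,t^*) \in G(S)}\bra{t^* - w^*}{\wh t - w\dbs} \ge 0$. On the other hand, quasidensity of $S$ implies that $S$ is of type (NI), i.e. $\infn_{(t,t^*) \in G(S)}\bra{t^* - w^*}{\wh t - w\dbs} \le 0$ \big(\Thm{FPthm}, (a)$\lr$(c)\big), so this infimum equals $0$, that is to say $\theta_S(w^*,w\dbs) = \bra{w^*}{w\dbs}$; the characterization \eqref{NI3} of $G(S^\F)$ in terms of $\theta_S$ then yields $(w^*,w\dbs) \in G(S^\F)$. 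So the substitute for your missing step is the equivalence of quasidensity with type (NI) combined with the $\theta_S$--characterization of the Fitzpatrick extension, not any quasidensity property of $S^\F$ itself.
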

\begin{proof}
See \cite[Lemma 12.5, p.\ 1047]{PARTONE}.   There is also a sketch of a proof in Appendix 1, \Sec{APPsec}.
\end{proof}
\par
We end this section by calculating the Fitzpatrick extension of a general subdifferential, as well as computing the Fitzpatrick extensions of the two closed, monotone, quasidense multifunctions introduced in \Exs{DNORMALex} and \ref{NORMALex}. 
\begin{theorem}\label{FITZDIFFthm}
Let $f \in \PCLSC(E)$.   Then $(\partial f)^\F = \partial(f^*)$.
\end{theorem}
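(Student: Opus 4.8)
The plan is to upgrade the inclusion of \Lem{FITZDIFFlem} to an equality by a maximality argument, rather than by a direct conjugate computation. First I would record the two structural facts that make the comparison possible. By \Thm{RTRthm}, $\partial f$ is closed, monotone and quasidense, so \Thm{AFMAXthm}, applied with $S = \partial f$, shows that $(\partial f)^\F\colon E^* \toto E\dbs$ is \emph{maximally} monotone. On the other hand, since $f \in \PCLSC(E)$ its conjugate satisfies $f^* \in \PCLSC(E^*)$, so $\partial(f^*)\colon E^* \toto (E^*)^* = E\dbs$ is a monotone multifunction into the \emph{same} bidual space, its monotonicity being immediate from the subdifferential inequality for the convex function $f^*$.

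With these in hand, \Lem{FITZDIFFlem} gives $G\big((\partial f)^\F\big) \subset G\big(\partial(f^*)\big)$. Thus $(\partial f)^\F$ is a maximally monotone multifunction whose graph sits inside the graph of the monotone multifunction $\partial(f^*)$, both regarded as monotone subsets of $E^* \times E\dbs$ with respect to the pairing between $E^*$ and $E\dbs$. Were the inclusion proper, $G\big((\partial f)^\F\big)$ would be a monotone set strictly contained in another monotone set, contradicting its maximality. Hence the two graphs coincide, that is, $(\partial f)^\F = \partial(f^*)$.

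I do not expect a genuine obstacle: the analytic content has already been absorbed into \Lem{FITZDIFFlem} (the inclusion) and \Thm{AFMAXthm} (maximality of the Fitzpatrick extension), and the present statement is the short deduction that turns the inclusion into an equality. The only points needing care are bookkeeping ones, namely checking that $\partial(f^*)$ and $(\partial f)^\F$ really live on the same pair of spaces $E^*$ and $E\dbs$ so that the maximality comparison is meaningful, and that $\partial(f^*)$ is genuinely monotone there. A more computational route --- establishing the identity ${\varphi_{\partial f}}^*(y^*,y\dbs) = f^*(y^*) + f\dbs(y\dbs)$ for all $(y^*,y\dbs)$ and reading off $G\big((\partial f)^\F\big)$ from \eqref{FITZ1} --- would also work, but it would require a delicate lower bound on $\varphi_{\partial f}$ of Br{\o}ndsted--Rockafellar type, so the maximality argument is considerably cleaner.
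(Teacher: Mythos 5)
Your proof is correct and is essentially the paper's own argument: the paper likewise deduces the equality immediately from \Lem{FITZDIFFlem} (the inclusion $G\big((\partial f)^\F\big) \subset G\big(\partial(f^*)\big)$) and \Thm{AFMAXthm} (maximal monotonicity of $(\partial f)^\F$), with the monotonicity of $\partial(f^*)$ forcing the graphs to coincide. Your additional bookkeeping remarks about the spaces $E^*$ and $E\dbs$ are sensible but add nothing beyond what the paper leaves implicit.
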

\begin{proof}
This is immediate from \Lem{FITZDIFFlem} and \Thm{AFMAXthm}.  
\end{proof}
\begin{theorem}[The Fitzpatrick extension of $\partial\tauKT$]\label{FITZPARTAUthm}
Let $\KT$ be a nonempty $w(E^*,E)$--compact convex subset of $E^*$.   Then
\begin{equation*}
(y^*,y\dbs) \in G\big({\partial\tauKT}^\F\big) \iff y^* \in \KT\hbox{ and }\bra{y^*}{y\dbs} = \sup\Bra{\KT}{y\dbs}.
\end{equation*}
\end{theorem}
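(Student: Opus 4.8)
The plan is to upgrade \Lem{FITZPARTAUlem}, which supplies only the forward implication, to a full equivalence by invoking the \emph{equality} $(\partial f)^\F = \partial(f^*)$ of \Thm{FITZDIFFthm} in place of the weaker inclusion of \Lem{FITZDIFFlem}. Since $\tauKT$ is continuous and sublinear, it lies in $\PCLSC(E)$, so \Thm{FITZDIFFthm} applies to $f := \tauKT$. As recorded in \Ex{DNORMALex}, ${\tauKT}^* = \IKT$, and hence $(\partial\tauKT)^\F = \partial({\tauKT}^*) = \partial\IKT$. Thus the problem reduces to characterizing $G(\partial\IKT)$: we have $(y^*,y\dbs) \in G\big((\partial\tauKT)^\F\big)$ if and only if $y\dbs \in \partial\IKT(y^*)$.

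Next I would unwind the subdifferential. By the definition of $\partial\IKT$, $y\dbs \in \partial\IKT(y^*)$ holds exactly when $\IKT(y^*) + {\IKT}^*(y\dbs) = \bra{y^*}{y\dbs}$, where ${\IKT}^*$ denotes the Fenchel conjugate taken in the duality $E^* \times E\dbs$. A direct computation gives ${\IKT}^*(y\dbs) = \supn_{x^* \in E^*}\big[\bra{x^*}{y\dbs} - \IKT(x^*)\big] = \supn_{x^* \in \KT}\bra{x^*}{y\dbs} = \sup\Bra{\KT}{y\dbs}$, which is finite because $\KT$ is $w(E^*,E)$--compact, hence norm bounded.

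Finally I would read off the equivalence. Since $\IKT$ takes only the values $0$ and $\infty$, the Fenchel--Young equality $\IKT(y^*) + \sup\Bra{\KT}{y\dbs} = \bra{y^*}{y\dbs}$ can hold only when $y^* \in \KT$, so that $\IKT(y^*) = 0$; in that case it reduces precisely to $\bra{y^*}{y\dbs} = \sup\Bra{\KT}{y\dbs}$. Conversely, these two conditions together force the Fenchel--Young equality, hence $y\dbs \in \partial\IKT(y^*)$. This is exactly the asserted characterization. There is no real obstacle here: the argument is essentially a specialization of \Thm{FITZDIFFthm} to $f = \tauKT$, and the only points needing care are confirming $\tauKT \in \PCLSC(E)$ so that the theorem applies and evaluating ${\IKT}^*$ in the correct bidual pairing.
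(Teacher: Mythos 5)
Your proposal is correct and takes essentially the same route as the paper: the paper likewise combines ${\tauKT}^* = \IKT$ (from \Ex{DNORMALex}) with \Thm{FITZDIFFthm} to obtain ${\partial\tauKT}^\F = \partial\IKT$, and then unwinds $\partial\IKT$ exactly as you do, via the Fenchel--Young equality in the $E^* \times E\dbs$ pairing (what the paper calls ``the technique of \Lem{FITZPARTAUlem}'').
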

\begin{proof}
As we observed in \Ex{DNORMALex}, ${\tauKT}^* = \I_\KT$ and so, from \Thm{FITZDIFFthm}, ${\partial\tauKT}^\F = \partial\I_\KT$.   The result now follows by using the technique of \Lem{FITZPARTAUlem}. 
\end{proof}
\begin{theorem}[The Fitzpatrick extension of $\partial\I_K$]\label{PHINKthm}
Let $K$ be a nonempty $w(E,E^*)$--compact convex subset of $E$.   Then
\begin{equation*}
(y^*,y\dbs)\in G\big({\pIK}^\F\big) \iff y\dbs \in \wh K\hbox{ and }\bra{y^*}{y\dbs} = \sup\bra{K}{y^*}.
\end{equation*}
\end{theorem}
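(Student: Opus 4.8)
The plan is to reduce the determination of the Fitzpatrick extension of the normal cone multifunction to a subdifferential computation, exactly as was done for $\partial\tauKT$ in \Thm{FITZPARTAUthm}, and then to read off the graph from the conjugate formulas already assembled in \Ex{NORMALex}. The engine behind the reduction is \Thm{FITZDIFFthm}, which identifies $(\partial f)^\F$ with $\partial(f^*)$ for any $f \in \PCLSC(E)$.

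First I would apply \Thm{FITZDIFFthm} to $f = \I_K$. Since ${\I_K}^* = \sigma_K$ (as observed in \Ex{NORMALex}), this yields the identity ${\pIK}^\F = \partial\sigma_K$, where $\partial\sigma_K\colon\ E^* \toto E\dbs$. Thus the problem becomes that of characterizing the graph of $\partial\sigma_K$. Writing out the defining (Fenchel--Young) equation for the subdifferential, $y\dbs \in \partial\sigma_K(y^*)$ holds if, and only if, $\sigma_K(y^*) + {\sigma_K}^*(y\dbs) = \bra{y^*}{y\dbs}$. Now I would substitute the conjugate recorded in \eqref{NORMAL4}, namely ${\sigma_K}^* = \I_{\wh K}$, to rewrite this condition as $\sigma_K(y^*) + \I_{\wh K}(y\dbs) = \bra{y^*}{y\dbs}$. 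Since the right-hand side is finite, $\I_{\wh K}(y\dbs)$ must be finite too, forcing $y\dbs \in \wh K$ and hence $\I_{\wh K}(y\dbs) = 0$; the equation then collapses to $\sigma_K(y^*) = \bra{y^*}{y\dbs}$, and since $\sigma_K(y^*) = \sup\bra{K}{y^*}$ this is precisely the asserted equivalence.

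I do not expect any individual computation to be difficult; the conjugates have all been prepared in \Ex{NORMALex} and \eqref{NORMAL4}. The one point that deserves emphasis is that the theorem is an equivalence, whereas \Lem{PHINKlem}(a) supplies only the forward implication. That lemma rests on the inclusion $G\big((\partial f)^\F\big) \subset G\big(\partial(f^*)\big)$ of \Lem{FITZDIFFlem}, which is not enough to recover the reverse direction. The genuine content of the theorem therefore lies in the converse, and this is exactly where I must call on the full equality of \Thm{FITZDIFFthm}, and behind it the maximality of the Fitzpatrick extension in \Thm{AFMAXthm}.
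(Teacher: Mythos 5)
Your proposal is correct and follows the paper's own route exactly: the paper likewise deduces ${\pIK}^\F = \partial\sigma_K$ from \Thm{FITZDIFFthm} together with ${\I_K}^* = \sigma_K$, and then reads off the graph via the Fenchel--Young equation and ${\sigma_K}^* = \I_{\wh K}$ from \eqref{NORMAL4}, i.e.\ ``the technique of \Lem{PHINKlem}''. Your closing observation --- that \Lem{PHINKlem}(a) only yields the forward implication and that the equivalence genuinely needs the full equality of \Thm{FITZDIFFthm} (hence \Thm{AFMAXthm}) --- is also exactly the division of labor the paper intends.
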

\begin{proof}
As we observed in \Ex{NORMALex}, ${\I_K}^* = \sigma_K$ and so, from \Thm{FITZDIFFthm}, ${\partial\I_K}^\F = \partial\sigma_K$.   The result now follows by using the technique of \Lem{PHINKlem}.\end{proof}
%
\begin{remark}\label{FGrem}
\par
\cite[Theorem 12.4(a), p.\ 1047]{PARTONE} implies that if $S\colon\ E \toto E^*$ is closed, monotone and quasidense and $(y^*,y\dbs) \in E^* \times E\dbs$ then
\begin{equation*}
(y^*,y\dbs) \in G(S^\F) \iff \infn_{(s,s^*) \in G(S)}\bra{s^* - y^*}{\wh s - y\dbs} = 0.
\end{equation*}
This is equivalent to the result proved in \eqref{NI3}.   There are more characterizations of $S^\F$ in \Thm{SEQCHARthm} and \eqref{NI4}.   We do not know if $S^\F\colon E^* \toto E\dbs$ is necessarily quasidense in the context of \Thm{AFMAXthm}, but \Thms{FITZDIFFthm} and \ref{RTRthm} show that {\em if $f \in \PCLSC(E)$ then $(\partial f)^\F$ is quasidense}.   \Thm{FITZDIFFthm} is equivalent to\break \cite[Th\'eor\`eme~3.1, pp.\ 376--378]{GOSSEZ}.   
\par
\Lem{FITZDIFFlem} is the ``easy half'' of \Thm{FITZDIFFthm}.   \Thm{FITZDIFFthm} and its two consequences, \Thms{FITZPARTAUthm} and \ref{PHINKthm}, will not be used any more in this paper, while \Lem{FITZPARTAUlem} will be used in \Thm{FPthm}, and \Lem{PHINKlem} will be used in \Thm{FUZZEthm}.   We have separated the proof of \Thm{FITZDIFFthm} into these two parts so that the reader is not obliged to wade through the complexities of the later part of \cite{PARTONE} or Appendix 1, \Sec{APPsec}, to understand the logic of the rest of this paper.   This raises the issue of finding a simple, direct proof of the inclusion $G\big(\partial(f^*)\big) \subset G\big((\partial f)^\F\big)$ in the context of \Thm{FITZDIFFthm}.   To date, we have not found such a proof.    
\end{remark}
\section{Two sum theorems}\label{SUMSsec}
The proofs of the two results in this section use, among other things, the\break bivariate version of the Attouch-Brezis theorem first proved in Simons--Z\u{a}linescu \cite[Section~4, pp.\ 8--10]{SZNZ}.   \Thm{STDthm} will be used in  \Thms{FPVthm} and \ref{FUZZDthm}, while \Thm{STRthm} will be used in  \Thms{FUZZEthm} and \ref{FPthm}.    
\begin{theorem}[Sum theorem with domain constraints]\label{STDthm}
Let $S,T\colon\ E \toto E^*$ be closed, monotone and quasidense and either $D(S) \cap \intr\,D(T) \ne \emptyset$ or\break $\intr\,D(S) \cap D(T) \ne \emptyset$.   Then $S + T$ is closed, monotone and quasidense.
\end{theorem}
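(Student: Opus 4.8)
My plan is to establish the three conclusions separately, in the order monotonicity, closedness, quasidensity, the last being the substantive one. Note first that, by \Thm{RLMAXthm}, each of $S$ and $T$ is already maximally monotone. Fix $x_0 \in D(S) \cap \intr D(T)$ (the case $\intr D(S) \cap D(T) \ne \emptyset$ being symmetric). Monotonicity of $S+T$ is immediate: a point of $G(S+T)$ has the form $(x,s^*+t^*)$ with $s^* \in S(x)$ and $t^* \in T(x)$, and for two such points the monotonicity inequality splits as the sum of the ones for $S$ and for $T$.

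Next I would prove closedness directly. Suppose $(x_n,z_n) \in G(S+T)$ with $(x_n,z_n) \to (x,z)$ in norm, and write $z_n = s_n^* + t_n^*$ with $s_n^* \in S(x_n)$, $t_n^* \in T(x_n)$. Choosing $s_0^* \in S(x_0)$, monotonicity of $S$ gives $\bra{x_n - x_0}{s_n^*} \ge \bra{x_n - x_0}{s_0^*}$, so $\bra{x_n - x_0}{t_n^*} = \bra{x_n - x_0}{z_n} - \bra{x_n - x_0}{s_n^*}$ is bounded above; since $x_0 \in \intr D(T)$, the standard local boundedness argument for the monotone operator $T$ at the interior point $x_0$ then forces $\{t_n^*\}$, and hence $\{s_n^*\}$, to be bounded. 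Passing to a $w(E^*,E)$--convergent subnet and using that the maximally monotone $S$ and $T$ have graphs closed in the strong$\times$weak$^*$ topology, I obtain limits $t^* \in T(x)$ and $s^* = z - t^* \in S(x)$, so $(x,z) \in G(S+T)$.

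For quasidensity I would pass to the Fitzpatrick function of \Def{PHdef}. A direct estimate from the definition (write a point of $G(S+T)$ as $(w,s^*+t^*)$ and distribute the pairing) gives the partial--subadditivity bound
\begin{equation*}
\varphi_{S+T}(x,x^*) \le \infn_{u^* + v^* = x^*}\big[\varphi_S(x,u^*) + \varphi_T(x,v^*)\big]\qquad\all\ (x,x^*) \in E \times E^*.
\end{equation*}
Conjugation reverses this, and the conjugate of the right--hand side --- an infimal convolution in the $E^*$--variable with the $E$--variable \emph{shared} --- is evaluated by the bivariate Attouch--Brezis theorem of \cite{SZNZ}, whose constraint qualification reduces, via $D(S) \subset \pi_E\,\dom\,\varphi_S$ and $\intr D(T) \subset \intr\,\pi_E\,\dom\,\varphi_T$, exactly to $x_0 \in D(S) \cap \intr D(T)$. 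This yields, for all $(y^*,y\dbs) \in E^* \times E\dbs$,
\begin{equation*}
{\varphi_{S+T}}^*(y^*,y\dbs) \ge \infn_{a^* + b^* = y^*}\big[{\varphi_S}^*(a^*,y\dbs) + {\varphi_T}^*(b^*,y\dbs)\big].
\end{equation*}
Applying \Thm{PHTHthm} to the maximally monotone quasidense $S$ and $T$ gives ${\varphi_S}^*(a^*,y\dbs) \ge \bra{a^*}{y\dbs}$ and ${\varphi_T}^*(b^*,y\dbs) \ge \bra{b^*}{y\dbs}$; since $\bra{a^*}{y\dbs} + \bra{b^*}{y\dbs} = \bra{y^*}{y\dbs}$ for every splitting, the infimum is $\ge \bra{y^*}{y\dbs}$. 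Hence ${\varphi_{S+T}}^*(y^*,y\dbs) \ge \bra{y^*}{y\dbs}$ for all $(y^*,y\dbs)$, and the general form of the criterion of \Thm{PHTHthm} (valid without assuming maximality, \cite{PARTONE}) shows that $S+T$ is quasidense.

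The main obstacle is the conjugation step: I need the \emph{reverse} conjugate-of-a-sum inequality with \emph{attainment}, and this is exactly where the domain constraint is consumed. Without a constraint qualification only the trivial direction ${\varphi_{S+T}}^* \le \infn[\cdots]$ holds, from which the lower bound $\bra{y^*}{y\dbs}$ cannot be extracted; so the real work is to set up the two functions on $E \times E^* \times E^*$ correctly and to check that the Attouch--Brezis qualification is implied by $D(S) \cap \intr D(T) \ne \emptyset$ (this is where the bivariate, shared--variable version is indispensable, since the two Fitzpatrick functions depend on a common first coordinate). The closedness argument is a secondary technical point that also relies on the same domain constraint.
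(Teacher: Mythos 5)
Your monotonicity and closedness arguments are sound (local boundedness of $T$ at $x_0\in\intr D(T)$, together with the upper bound on $\bra{x_n-x_0}{t_n^*}$, does bound $\{t_n^*\}$, and the strong$\times$weak$^*$ closedness of the graphs of the maximally monotone $S$ and $T$ finishes the argument), and your route to the inequality ${\varphi_{S+T}}^*(y^*,y\dbs)\ge\bra{y^*}{y\dbs}$ --- partial subadditivity of $\varphi_{S+T}$ over splittings $x^*=u^*+v^*$, conjugation, and the bivariate Attouch--Brezis theorem of \cite{SZNZ} under the constraint qualification supplied by $D(S)\cap\intr D(T)\ne\emptyset$ --- is correct, and is in fact the machinery behind the proof that the paper cites from \cite{PARTONE}.

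The genuine gap is the very last inference. \Thm{PHTHthm} converts ${\varphi_{S+T}}^*\ge$ pairing into quasidensity only for a multifunction that is already known to be \emph{maximally} monotone, and there is no ``general form valid without maximality'': that implication is false for merely closed, monotone multifunctions. Take $E=\RR$ and $G(S)=\{(s,0)\in\RR\times\RR:\ |s|\ge1\}$. Then $S$ is closed and monotone, $\varphi_S=0$ on $\RR\times\{0\}$ and $+\infty$ elsewhere, so ${\varphi_S}^*(y^*,y\dbs)=0$ if $y^*=0$ and $+\infty$ otherwise; hence ${\varphi_S}^*(y^*,y\dbs)\ge\bra{y^*}{y\dbs}$ everywhere, yet $S$ is not quasidense, since the infimum in \Def{QDdef} at $(x,x^*)=(0,0)$ equals $\inf_{|s|\ge1}\half s^2=\half>0$ (of course, this $S$ is not maximally monotone, which is exactly the hypothesis of \Thm{PHTHthm} that fails). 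In your situation maximal monotonicity of $S+T$ is not available: by \Thm{RLMAXthm} it is essentially a \emph{consequence} of the quasidensity you are trying to prove, so assuming it would be circular, and in a nonreflexive space no classical sum theorem supplies it. This is precisely where the depth of the theorem is concentrated: one must pass from the dual inequality to quasidensity of an operator not yet known to be maximal, and the published proofs do this with substantial extra input --- for instance the strong--representability theorems of Marques Alves--Svaiter \cite{ASD} or Voisei--Z\u alinescu \cite{VZ} applied to (the lower semicontinuous hull of) your partial infimal convolution $h$, together with a second, primal, Attouch--Brezis attainment argument showing that the coincidence set $\{(x,x^*)\colon h(x,x^*)=\bra{x}{x^*}\}$ is contained in $G(S+T)$ --- or, equivalently, the $r_L$--density machinery of \cite[Theorem 8.4]{PARTONE} that the paper invokes in lieu of a self--contained proof. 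Without an ingredient of this kind, your argument does not reach the conclusion.
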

\begin{proof}
See \cite[Theorem 8.4(a)$\lr$(d), pp.\ 1036--1037]{PARTONE}.
\end{proof}
\begin{theorem}[Sum theorem with range constraints]\label{STRthm}
Let $S,T\colon\ E \toto E^*$ be closed, monotone and quasidense and either $R(S) \cap \intr\,R(T) \ne \emptyset$ or\break $\intr\,R(S) \cap R(T) \ne \emptyset$. Then the multifunction $y \mapsto (S^\F + T^\F)^{-1}(\wh y)$ is closed, monotone and quasidense.
If, further, $R(T^\F) \subset \wh E$, then the {\em parallel sum} $S \parallel T := (S^{-1} + T^{-1})^{-1}$ is closed, monotone and  quasidense.
\end{theorem}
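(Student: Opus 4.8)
The plan is to dualize: I regard $S^\F$ and $T^\F$ as closed monotone objects in the Banach SN space $E^* \times E\dbs$ (base space $E^*$, dual $E\dbs$), convert the \emph{range} constraint on $S,T$ into a \emph{domain} constraint on $S^\F,T^\F$, and transfer the conclusion back to $E \times E^*$ through the inverse and the canonical embedding $\wh\cdot\colon E \to E\dbs$. Write $W := S^\F + T^\F\colon E^* \toto E\dbs$ and let $U$ denote the multifunction $y \mapsto W^{-1}(\wh y)$, so that $(y,z^*) \in G(U)$ exactly when $(z^*,\wh y) \in G(W)$. Monotonicity of $U$ is then immediate and cost-free: if $(y_1,z_1^*),(y_2,z_2^*) \in G(U)$ then $(z_1^*,\wh{y_1}),(z_2^*,\wh{y_2}) \in G(W)$, and since $W$ is a sum of the monotone multifunctions $S^\F,T^\F$ (\Def{FITZdef}), monotonicity of $W$ gives $\bra{y_1 - y_2}{z_1^* - z_2^*} = \bra{z_1^* - z_2^*}{\wh{y_1} - \wh{y_2}} \ge 0$.

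For the constraint qualification I would use \eqref{FITZ2}: since $(x,x^*) \in G(S) \iff (x^*,\wh x) \in G(S^\F)$, we have $R(S) \subset D(S^\F)$ and, likewise, $R(T) \subset D(T^\F)$. Because the interior operation is monotone under inclusion, $\intr R(T) \subset \intr D(T^\F)$, so either hypothesis $R(S) \cap \intr R(T) \ne \emptyset$ or $\intr R(S) \cap R(T) \ne \emptyset$ passes to the corresponding domain constraint $D(S^\F) \cap \intr D(T^\F) \ne \emptyset$ or $\intr D(S^\F) \cap D(T^\F) \ne \emptyset$ in $E^* \times E\dbs$. This is precisely the hypothesis of \Thm{STDthm}, now read with base space $E^*$.

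The main obstacle is that \Thm{STDthm} also requires its summands to be quasidense, and --- as \Rem{FGrem} records --- it is not known whether $S^\F$ is quasidense in $E^* \times E\dbs$. Hence the range sum theorem cannot be obtained by simply invoking the domain sum theorem for $S^\F,T^\F$; it is a genuinely independent result. The way through is to establish the closedness and quasidensity of $U$ directly. Using that $\wh\cdot$ is a linear isometry and that $\bra{x^*}{\wh x} = \bra{x}{x^*}$, the infimum in \Def{QDdef} for $U$ at $(x,x^*)$ equals the corresponding quadratic infimum for $W$ at $(x^*,\wh x)$, but restricted to those graph points of $W$ lying in $E^* \times \wh E$; the content is to produce such approximants. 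I would do this by realizing the relevant Fitzpatrick function of $U$ as a partial inf-convolution of $\varphi_S$ and $\varphi_T$ in the dual variable and applying the bivariate Attouch--Brezis theorem --- whose hypothesis is exactly the domain constraint established above --- to evaluate it, thereby verifying \Def{QDdef} for $U$ and simultaneously the closedness of $G(W)$. In short, one dualizes the \emph{proof} of \Thm{STDthm} rather than its statement; this is the technical heart. Once $U$ is shown closed, monotone and quasidense, \Thm{RLMAXthm} upgrades it to maximal monotonicity.

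Finally, Part 2 is a corollary of Part 1. Assume $R(T^\F) \subset \wh E$. By \Lem{HATlem}, $G(T^\F) = L\big(G(T)\big)$, so $T^\F(z^*) = \{\wh b\colon (b,z^*) \in G(T)\}$. If $(y,z^*) \in G(U)$, write $\wh y = a\dbs + \wh b$ with $a\dbs \in S^\F(z^*)$ and $(b,z^*) \in G(T)$; then $a\dbs = \wh{y - b} \in \wh E$, and \eqref{FITZ2} gives $(y - b,z^*) \in G(S)$. Hence $y = (y-b) + b \in S^{-1}(z^*) + T^{-1}(z^*)$, that is, $(y,z^*) \in G(S \parallel T)$; the reverse inclusion is immediate from \eqref{FITZ2}. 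Thus $U = S \parallel T$, and Part 1 shows that $S \parallel T$ is closed, monotone and quasidense.
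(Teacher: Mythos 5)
Your overall architecture is sound, and two things in it are genuinely right. First, the observation that one cannot simply apply \Thm{STDthm} to $S^\F$ and $T^\F$ --- since their quasidensity in $E^* \times E\dbs$ is unknown (cf.\ \Rem{FGrem}) --- correctly identifies why the range theorem is not a formal corollary of the domain theorem; your monotonicity argument for $U\colon y \mapsto (S^\F + T^\F)^{-1}(\wh y)$ and your transfer of the constraint qualification through \eqref{FITZ2} are also fine. Second, your derivation of the parallel-sum assertion from the first assertion is correct and complete: using \Lem{HATlem} to write $T^\F(z^*) = \{\wh b\colon (b,z^*) \in G(T)\}$, splitting $\wh y = a\dbs + \wh b$, noting $a\dbs = \wh{y-b}$, and applying \eqref{FITZ2} to get $(y-b,z^*) \in G(S)$ shows $U = S \parallel T$. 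This matches the paper's own route (which phrases the same step via the identification $G(T^\F) = G(T)^\F = L\big(G(T)\big)$) and is, if anything, cleaner.

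The gap is the first assertion, which is the entire analytic content of the theorem, and for which you offer a plan rather than a proof. The paper's proof at this point is a citation to \cite[Theorem 8.8]{PARTONE}; a blind proof must replace that citation with actual work, and your sketch (``realize the relevant Fitzpatrick function of $U$ as a partial inf-convolution of $\varphi_S$ and $\varphi_T$ \dots\ and apply the bivariate Attouch--Brezis theorem'') leaves every hard step undone. Concretely: (i) the convolution must be taken in the $E$-variable, $(x,x^*) \mapsto \inf_{u+v=x}\big[\varphi_S(u,x^*) + \varphi_T(v,x^*)\big]$, whose conjugate is an inf-convolution of ${\varphi_S}^*,{\varphi_T}^*$ in the $E\dbs$-variable; your phrase ``in the dual variable,'' read literally as the $E^*$-variable, is the \Thm{STDthm} (domain) case, not this one; (ii) one must prove, via the Simons--Z\u alinescu bivariate Attouch--Brezis theorem and the constraint qualification, that this conjugate formula holds with the infimum attained, so that the equality set of the resulting lower semicontinuous function is exactly $G(U)$; (iii) passing from the conjugate inequality to quasidensity of $U$ cannot go through \Thm{PHTHthm} as stated, since that criterion presupposes maximal monotonicity, which is not yet known for $U$ --- one needs the more general criterion of \cite[Section 6]{PARTONE}; and (iv) closedness of $G(U)$ is not automatic (a sum of multifunctions with closed graphs need not have closed graph) and must be extracted from the representation in (ii). Steps (i)--(iv) are precisely the content of the cited theorem, so as written your proposal proves the second assertion modulo the first, but not the first.
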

\begin{proof}
The first observation was established in \cite[Theorem 8.8, p.\ 1039]{PARTONE}.   If $R(T^\F) \subset \wh E$ then, from \Lem{HATlem}, $G(T^\F) = L\big(G(T)\big)$.   Now in \cite[Definition 8.5, eqn.\ (46), p.\ 1037]{PARTONE}, what was actually defined was the Fitzpatrick extension of a {\em subset} of $E \times E^*$ rather than that of a {\em multifunction from $E$ into $E^*$}, and the relation between them is $G(T^\F) = G(T)^\F$.   Thus $G(T)^\F = L\big(G(T)\big)$, and it now follows from \cite[Theorem 8.8]{PARTONE} that $S \parallel T$ is closed, monotone and quasidense.
\end{proof}
%
\section{Type (FPV)}\label{FPVsec}
\begin{definition}\label{FPVdef}
Let $S\colon\ E \toto E^*$ be monotone.   We say that $S$ is {\em of type (FPV)} or {\em maximally monotone locally} if whenever $U$ is an open convex subset of $E$, $U \cap D(S) \neq \emptyset$, $(w,w^*) \in U \times E^*$ and
\begin{equation}\label{FPV1}
(s,s^*) \in G(S)\quand s \in U \qlr \bra{s - w}{s^* - w^*}\ge 0,
\end{equation}
then $(w,w^*) \in G(S)$.   (If we take $U = E$, we see that every monotone\break multifunction of type (FPV) is maximally monotone.)
\end{definition}
\begin{theorem}\label{FPVthm}
Let $S\colon\ E \toto E^*$ be closed, monotone and quasidense.      Then $S$ is maximally monotone of type (FPV).
\end{theorem}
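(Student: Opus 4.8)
The maximal monotonicity of $S$ is already supplied by \Thm{RLMAXthm}, so the real content is to verify type (FPV). The plan is to fix an open convex set $U$ with $U \cap D(S) \ne \emptyset$ and a point $(w,w^*) \in U \times E^*$ satisfying \eqref{FPV1}, and to deduce that $(w,w^*) \in G(S)$. The engine will be the normal cone multifunction $\pIK$ of a suitably chosen bounded closed convex set $K$, combined with the sum theorem with domain constraints, \Thm{STDthm}. The reason $\pIK$ is the right auxiliary object is twofold: on any $K \subset U$ it lets me feed the local hypothesis \eqref{FPV1} into a global monotonicity argument, while at an \emph{interior} point of $K$ it contributes nothing, since there \eqref{NORMAL1} forces $\pIK$ to take the value $\{0\}$.

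First I would pick $d_0 \in U \cap D(S)$ and build a bounded closed convex set $K$ with $\{w,d_0\} \subset \intr K$ and $K \subset U$. Since $U$ is open and convex, the segment $[w,d_0]$ lies in $U$, and a sufficiently thin closed tube around it works, because the closed $\eps$--neighbourhood of a convex set is convex. By \Ex{NORMALex}, $\pIK$ is closed, monotone and quasidense, and by \eqref{NORMAL2} we have $D(\pIK) = K$. Because $d_0 \in D(S) \cap \intr K = D(S) \cap \intr D(\pIK)$, the domain constraint in \Thm{STDthm} is satisfied, so $S + \pIK$ is closed, monotone and quasidense; by \Thm{RLMAXthm} it is therefore maximally monotone. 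I expect this construction of $K$ to be the one genuinely delicate step: it must keep $w$ in its interior (so that $\pIK$ vanishes at $w$ in the final extraction), keep $K$ inside $U$ (so that \eqref{FPV1} applies to every point of $K$), and trap a point of $D(S)$ in its interior (so that the domain constraint holds). Thickening the segment $[w,d_0]$ reconciles all three demands simultaneously.

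Finally I would check that $G(S + \pIK) \cup \{(w,w^*)\}$ is monotone. Let $(t,t^*) \in G(S + \pIK)$; then $t \in D(S) \cap K \subset U$, and I can write $t^* = s^* + n^*$ with $s^* \in S(t)$ and $n^* \in \pIK(t)$. As $(t,s^*) \in G(S)$ with $t \in U$, hypothesis \eqref{FPV1} yields $\bra{t - w}{s^* - w^*} \ge 0$; and since $w \in K$, \eqref{NORMAL1} gives $\bra{t}{n^*} = \sup\bra{K}{n^*} \ge \bra{w}{n^*}$, that is, $\bra{t - w}{n^*} \ge 0$. Adding these, $\bra{t - w}{t^* - w^*} \ge 0$, so the augmented graph is monotone. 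By the maximal monotonicity of $S + \pIK$ this forces $(w,w^*) \in G(S + \pIK)$. Because $w \in \intr K$, \eqref{NORMAL1} gives $\pIK(w) = \{0\}$, so $w^* \in (S + \pIK)(w) = S(w)$, i.e. $(w,w^*) \in G(S)$, which completes the verification of type (FPV). The remaining routine computations are entirely powered by \Thm{STDthm} and \Ex{NORMALex}.
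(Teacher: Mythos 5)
Your proposal is correct and follows essentially the same route as the paper: the same thickened segment $K = [w,d_0] + \eps E_1 \subset U$, the same auxiliary multifunction $\pIK$ from \Ex{NORMALex}, the same appeal to \Thm{STDthm}, and the same final extraction of $x_0^* = 0$ at the interior point $w$. The only difference is organizational: where you verify that $(w,w^*)$ is monotonically related to all of $G(S+\pIK)$ and then invoke maximality via \Thm{RLMAXthm}, the paper inlines that maximality argument, applying the quasidensity inequality of $S+\pIK$ directly at $(w,w^*)$ and using closedness of the sum — the underlying computation is identical.
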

\begin{proof}
Let $U$ be an open convex subset of $E$, $U \cap D(S) \neq \emptyset$, $(w,w^*) \in U \times E^*$ and \eqref{FPV1} be satisfied.   Let $y \in U \cap D(S)$.   Since the segment $[w,y]$ is a compact subset of the open set $U$, we can choose $\eps > 0$ so that $K := [w,y] + \eps E_1 \subset U$.   $K$ is bounded, closed and convex.   Let $T := \pIK$.   From \eqref{NORMAL2},  $D(S) \cap \intr\,D(T) = D(S) \cap \intr\,K \ni y$, and \eqref{NORMAL3} and \Thm{STDthm} imply that $S + T$ is closed and quasidense.   Let $\eta > 0$.   Then there exists $(s,u^*) \in G(S + T)$ such that 
\begin{equation}\label{FPV3}
\half\|s - w\|^2 + \half\|u^* - w^*\|^2 + \bra{s - w}{u^* - w^*} < \eta.
\end{equation}
We can choose $s^* \in S(s)$ and $x^* \in T(s)$ such that $s^* + x^* = u^*$.   Since\break $s \in D(T) = K \subset U$, \eqref{FPV1} implies that $\bra{s - w}{s^* - w^*}\ge 0$ and, since $w \in K$, \eqref{NORMAL1} implies that $\bra{s}{x^*} = \sup\bra{K}{x^*} \ge \bra{w}{x^*}$.   Combining together these two inequalities, we have $\bra{s - w}{u^* - w^*} = \bra{s - w}{s^* - w^*} + \bra{s}{x^*} - \bra{w}{x^*} \ge 0$.   From \eqref{FPV3},
\begin{equation*}
\half\|s - w\|^2 + \half\|u^* - w^*\|^2 < \eta.
\end{equation*}
Thus, taking $\eta$ arbitrarily small and using the fact that $S + T$ is closed,\break $w^* \in (S + T)(w)$, from which there exist $s_0^* \in S(w)$ and $x_0^* \in T(w)$ such that $s_0^* + x_0^* = w^*$. From \eqref{NORMAL1}, $\bra{w}{x_0^*} = \sup\bra{K}{x_0^*}$.  Since $w \in \intr\,K$, $x_0^* = 0$, from which $s_0^* = w^*$.   Thus $(w,w^*) = (w,s_0^*) \in G(S)$, as required. 
\end{proof}
\begin{remark}
We do not know of an example of a maximally monotone\break multifunction that is not of type (FPV).      The tail operator (see \Ex{TAILex}) does not provide an example because it was proved in  Fitzpatrick--Phelps,\break \cite[Theorem 3.10, p.\ 68]{FITZTWO} that if $S\colon\ E \toto E^*$ is maximally monotone and $D(S) = E$ then $S$ is of type (FPV).   This question is closely related to the sum problem.   \big(See \cite[Theorem~44.1, p. 170]{HBM}.\big)   \Thm{FPVthm} can also be deduced from Voisei--Z\u alinescu, \cite[Remark 3.6, p.\ 1024]{VZ}.
\end{remark}
\section{Fuzzy criteria for quasidensity}\label{FUZZsec}
Let $S\colon\ E \toto E^*$ be closed and monotone.   From \Def{QDdef}, $S$ is quasidense if, and only if, for all $(w,w^*) \in E \times E^*$ and $\eta > 0$, there exists $(s,s^*) \in G(S)$ such that $\half\|s - w\|^2 + \half\|s^* - w^*\|^2 + \bra{s - w}{s^* - w^*} < \eta$.   In \Thm{FUZZDthm}, we show that this is equivalent to a formally much stronger condition in which $w^*$ is replaced by any nonempty $w(E^*,E)$--compact convex subset of $E^*$ and, in \Thm{FUZZEthm}, we show that this is equivalent to a formally much stronger condition in which $w$ is replaced by any nonempty $w(E,E^*)$--compact convex subset of $E$. \Thm{FUZZDthm} and \Thm{FUZZEthm} lead to \Thm{STRONGthm}, in which we prove that every closed, monotone quasidense multifunction is {\em strongly maximal}.
\begin{theorem}[A criterion for quasidensity in which $w^*$ becomes fuzzy]\label{FUZZDthm}
Let $S\colon\ E \toto E^*$ be closed and monotone.   Then {\rm (a)}$\ifff${\rm (b)}.
\par\noindent
{\rm (a)}\enspace $S$ is quasidense.
\par\noindent
{\rm (b)}\enspace For all $w \in E$, nonempty $w(E^*,E)$--compact convex subsets $\WT$ of $E^*$ and $\eta > 0$, there exists $(s,s^*) \in G(S)$ such that
\begin{equation*}
\half\|s - w\|^2 + \half\dist(s^*,\WT)^2 + \max\Bra{s - w}{s^* - \WT} < \eta.
\end{equation*} 
\end{theorem}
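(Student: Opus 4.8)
The equivalence splits into a trivial direction and a substantive one. For (b)$\,\lr\,$(a), I would simply specialise (b) to the singleton $\WT = \{w^*\}$: then $\dist(s^*,\WT) = \|s^* - w^*\|$ and $\max\Bra{s-w}{s^*-\WT} = \bra{s-w}{s^*-w^*}$, so the displayed inequality in (b) becomes exactly the defining inequality of quasidensity in \Def{QDdef}. No work is needed for this half.

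The content lies in (a)$\,\lr\,$(b), and the plan is to manufacture the fuzzy set $\WT$ out of a support-function subdifferential and feed it into the domain sum theorem \Thm{STDthm}. Fix $w$, $\WT$ and $\eta > 0$, and following \Ex{DNORMALex} with $\KT := -\WT$, set $\tauWT := \max\Bra{\cdot}{-\WT}$ and $g := \tauWT(\cdot - w)$. Since $-\WT$ is $w(E^*,E)$--compact convex, $g$ is finite, continuous and convex, hence $g \in \PCLSC(E)$ with $\dom g = E$; so by \Thm{RTRthm} the multifunction $T := \partial g$ is closed, monotone and quasidense, and $D(T) = E$. Consequently $D(S) \cap \intr D(T) = D(S) \ne \emptyset$, and \Thm{STDthm} yields that $S + T$ is closed, monotone and quasidense. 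Applying the quasidensity of $S+T$ at the point $(w,0)$ produces, for the given $\eta$, a pair $(s,u^*) \in G(S+T)$ with $\half\|s-w\|^2 + \half\|u^*\|^2 + \bra{s-w}{u^*} < \eta$, and I would write $u^* = s^* + t^*$ with $s^* \in S(s)$ and $t^* \in T(s)$.

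The key step is then to verify that this $(s,s^*) \in G(S)$ already witnesses (b). Because $t^* \in \partial g(s) = \partial\tauWT(s-w)$, the exposed-face description of the subdifferential of a support function (the analogue of \eqref{DNORMAL1} for $\KT = -\WT$, read at the argument $s-w$) forces $t^* \in -\WT$ and $\bra{s-w}{t^*} = \tauWT(s-w)$. Putting $w_0^* := -t^* \in \WT$ gives the distance estimate $\dist(s^*,\WT) \le \|s^* - w_0^*\| = \|s^* + t^*\| = \|u^*\|$, while for the bilinear term I would compute $\max\Bra{s-w}{s^*-\WT} = \bra{s-w}{s^*} + \max\Bra{s-w}{-\WT} = \bra{s-w}{s^*} + \tauWT(s-w) = \bra{s-w}{s^*+t^*} = \bra{s-w}{u^*}$. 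Hence the entire (b)-quantity is at most $\half\|s-w\|^2 + \half\|u^*\|^2 + \bra{s-w}{u^*} < \eta$, as required.

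The main obstacle is engineering this exact collapse. One must take the support function of $-\WT$ rather than of $\WT$, so that the single subgradient $t^*$ furnished by $\partial\tauWT$ simultaneously supplies the point $w_0^* = -t^*$ realising the distance and the linear functional realising the maximum in $\max\Bra{s-w}{s^*-\WT}$; with the wrong sign these two roles decouple and the bound fails. One must also secure the full-domain hypothesis $D(T) = E$ so that \Thm{STDthm} is genuinely applicable. Conceptually, the non-convexity of $G(S)$ is what rules out proving (b) by a direct minimax interchange, and the sum theorem is precisely the tool that substitutes for that missing convexity.
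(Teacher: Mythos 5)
Your proof is correct and follows essentially the same route as the paper: the singleton specialisation for (b)$\lr$(a), and for (a)$\lr$(b) the combination of \Ex{DNORMALex} with $\KT := -\WT$, \Thm{RTRthm}, and the sum theorem \Thm{STDthm}, with the same algebraic collapse identifying $-t^*$ as both the nearest point of $\WT$ and the maximiser of the bilinear term. The only (immaterial) difference is that the paper translates the multifunction $S$ by $(w,0)$ and tests quasidensity of ${_wS}+\partial\tauWT$ at the origin, whereas you translate the support function by $w$ and test quasidensity of $S+\partial g$ at $(w,0)$.
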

\begin{proof}
\par
First suppose that (a) is true.  Let $w \in E$ and $\WT$ be a nonempty $w(E^*,E)$--compact convex subset of $E^*$.   We define the multifunction\break $_wS\colon E \toto E^*$ so that $G({_wS}) = G(S) - (w,0)$, and write $T:= \partial\tauWT$.   Clearly, $_wS$ is closed, monotone and quasidense and, from  \eqref{DNORMAL2} and \eqref{DNORMAL3} with $\KT := -\WT$, $T$ is closed,  monotone and quasidense and $D(T) = E$.   \Thm{STDthm} now implies that $_wS + T$ is also closed, monotone and quasidense.   Thus, for all $\eta > 0$, there exist $x \in E$, $s^* \in {_wS}(x)$ and $x^* \in T(x)$ such that
\begin{equation*}
\half\|x\|^2 + \half\|s^* + x^*\|^2 + \bra{x}{s^* + x^*} < \eta.
\end{equation*}
From \eqref{DNORMAL1}, $x^* \in -\WT$ and $\bra{x}{x^*} = \max\Bra{x}{-\WT}$, and so $\max\Bra{x}{s^* - \WT} = \bra{x}{s^* + x^*}$.   Since $s^* \in {_wS}(x)$, $s^* \in S(x + w)$.  Let $s := x + w$:  then $x = s - w$ and $(s,s^*) \in G(S)$.   (b) now follows since $\|s^* + x^*\| = \|s^* - (-x^*)\| \ge \dist(s^*,\WT)$.
\par
Suppose, conversely, that (b) is true.  Let $(w,w^*) \in E \times E^*$ and $\eta > 0$.   Define $\WT := \{w^*\}$, and let $(s,s^*)$  be as in (b).   Then (b) implies that
\begin{equation*}
\half\|s - w\|^2 + \half\|s^* - w^*\|^2 + \bra{s - w}{s^* - w^*} < \eta.
\end{equation*}
Thus $S$ is quasidense, and so (a) is true.
\end{proof}
\par
We can think of the next result as a ``dual'' to \Thm{FUZZDthm}.
\begin{theorem}[A criterion for quasidensity in which $w$ becomes fuzzy]\label{FUZZEthm}
Let $S\colon\ E \toto E^*$ be closed and monotone.   Then {\rm (a)$\ifff$(b)}.
\par\noindent
{\rm (a)}\enspace $S$ is quasidense.
\par\noindent
{\rm (b)}\enspace For all nonempty $w(E,E^*)$--compact convex subsets $W$ of $E$, $w^* \in E^*$ and $\eta > 0$, there exists $(s,s^*) \in G(S)$ such that
\begin{equation*}
\half\dist(s,W)^2 + \half\|s^* - w^*\|^2 + \max\bra{s - W}{s^* - w^*} < \eta.
\end{equation*}
\end{theorem}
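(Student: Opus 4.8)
The plan is to dualize the proof of \Thm{FUZZDthm}, interchanging the roles of domain and range. Where that argument translated $S$ in the domain and added the full-domain multifunction $\partial\tauKT$ by means of the domain sum theorem, here I would translate $S$ in the range and form a \emph{parallel} sum with a full-range normal cone multifunction, invoking the range sum theorem \Thm{STRthm}.

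For (a)$\Rightarrow$(b), assume $S$ is quasidense and fix a nonempty $w(E,E^*)$--compact convex set $W \subset E$, a point $w^* \in E^*$ and $\eta > 0$. Define $S'\colon E \toto E^*$ by $G(S') := G(S) - (0,w^*)$; translation keeps $S'$ closed, monotone and quasidense. I would then take $T := \partial\I_{-W}$, the normal cone of $-W$, where the sign flip (exactly as in the use of $\KT := -\WT$ in \Thm{FUZZDthm}) is what makes the distance term come out correctly. Since $-W$ is again nonempty $w(E,E^*)$--compact convex, \eqref{NORMAL3} and \eqref{NORMAL5} give that $T$ is closed, monotone and quasidense with $R(T) = E^*$, and \Lem{PHINKlem}(b) (applied to $-W$) gives $R(T^\F) \subset \wh E$. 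As $\intr R(T) = E^*$ meets $R(S') \neq \emptyset$, \Thm{STRthm} applies and shows that the parallel sum $M := S' \parallel T$ is closed, monotone and quasidense.

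The heart of the argument is then to unwind $M$. A point $(x,z^*)$ lies in $G(M)$ precisely when $x = x_1 + x_2$ with $z^* + w^* \in S(x_1)$, $x_2 \in -W$ and $\bra{x_2}{z^*} = \sup\bra{-W}{z^*}$. Applying quasidensity of $M$ at the test point $(0,0)$ produces, for each $\eta>0$, such an $(x,z^*)$ with $\half\|x\|^2 + \half\|z^*\|^2 + \bra{x}{z^*} < \eta$. Putting $s := x_1$ and $s^* := z^* + w^*$ gives $(s,s^*) \in G(S)$ with $s^*-w^* = z^*$, so that $\half\|s^*-w^*\|^2 = \half\|z^*\|^2$; since $-x_2 \in W$ we get $\dist(s,W) \le \|x_1+x_2\| = \|x\|$; and the support-function identity $\bra{x_2}{z^*} = -\inf\bra{W}{z^*}$ yields $\max\bra{s-W}{s^*-w^*} = \bra{x_1}{z^*} - \inf\bra{W}{z^*} = \bra{x}{z^*}$. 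Adding these three facts delivers the inequality of (b). For the converse (b)$\Rightarrow$(a) I would simply take $W := \{w\}$, a singleton, so that $\dist(s,W) = \|s-w\|$ and $\max\bra{s-W}{s^*-w^*} = \bra{s-w}{s^*-w^*}$, which is exactly the defining inequality of \Def{QDdef}.

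The main obstacle is bookkeeping rather than analysis: one must keep straight which summand of the parallel sum records membership in $G(S)$ and which records the normal-cone constraint, and one must use $-W$ in place of $W$ so that the shared domain value $x = x_1 + x_2$ feeds $\dist(s,W)$ with the correct orientation. The only genuinely external ingredient is the hypothesis $R(T^\F) \subset \wh E$ required to reach the parallel-sum conclusion of \Thm{STRthm}, and this is provided exactly by \Lem{PHINKlem}(b).
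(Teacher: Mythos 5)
Your proposal is correct and follows essentially the same route as the paper's own proof: translate $S$ by $w^*$, take $T := \partial\I_{-W}$, use \eqref{NORMAL3}, \eqref{NORMAL5}, \Lem{PHINKlem}(b) and \Thm{STRthm} to conclude that the parallel sum $(S - w^*) \parallel T$ is closed, monotone and quasidense, test quasidensity at $(0,0)$, unwind via \eqref{NORMAL1}, and obtain the converse by taking $W := \{w\}$. The only difference is notational (you name the parallel sum $M$ and decompose $x = x_1 + x_2$, where the paper works directly with $(s+x,x^*)$), so there is nothing of substance to distinguish the two arguments.
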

\begin{proof}
First suppose that (a) is true.   Let $W$ be a nonempty $w(E,E^*)$--compact convex subset of $E$ and $w^* \in E^*$.   Clearly, $S - w^*$ is closed, monotone and quasidense.   Let $T := \partial\I_{-W}$.   From \eqref{NORMAL3} and \eqref{NORMAL5} with $K := -W$, $T$ is closed, monotone and quasidense and $R(T) = E^*$.   \Lem{PHINKlem}(b) and \Thm{STRthm} now imply that $(S - w^*) \parallel T$ is also closed, monotone and quasidense.   Thus, for all $\eta > 0$, there exist $x^* \in E^*$, $(s,x^*) \in G(S - w^*)$ and  $(x,x^*) \in G(T)$ such that
\begin{equation*}
\half\|s + x\|^2 + \half\|x^*\|^2 + \bra{s + x}{x^*} < \eta.
\end{equation*}
From \eqref{NORMAL1}, $x \in -W$ and $\bra{x}{x^*} = \max\bra{-W}{x^*}$, from which $\max\bra{s - W}{x^*} = \bra{s + x}{x^*}$.   Since $(s,x^*) \in G(S - w^*)$, $x^* + w^* \in S(s)$.   Let $s^*:= x^* + w^*$: then $x^* = s^* - w^*$ and $(s,s^*) \in G(S)$.   (b) now follows since $\|s + x\| = \|s - (-x)\| \ge \dist(s,W)$.
\par
Suppose, conversely, that (b) is true.  Let $(w,w^*) \in E \times E^*$ and $\eta > 0$.   Define $W := \{w\}$, and let $(s,s^*)$ be as in (b).   Then (b) implies that
\begin{equation*}
\half\|s - w\|^2 +   \half\|s^* - w^*\|^2 + \bra{s - w}{s^* - w^*} < \eta.
\end{equation*}
Thus $S$ is quasidense, and so (a) is true.
\end{proof}
\par
\Thm{STRONGthm} below states that a closed, monotone quasidense multifunction is {\em strongly maximal} in the sense of \cite[Theorems~6.1-2, pp.\ 1386--1387]{CONTROLLED}.   It is worth pointing out that we do not know of a maximally monotone multifunction that is not strongly maximal.   The tail operator (see \Ex{TAILex}) does not provide an example because it was proved in  Bauschke--Simons, \cite[Theorem 1.1, pp.\ 166--167]{BS} that if $S\colon\ D(S) \subset E \to E^*$ is linear and maximally monotone then $S$ is strongly maximal.
\par
We will use the computational rules contained in \Lems{NORMWKlem} and \ref{DISTlem} below.   ${\cal T}_E$ stands for the norm topology of $E$ and ${\cal T}_{E^*}$ for the norm topology of $E^*$.   We note that \Lem{NORMWKlem} is true even if $S$ is merely maximally monotone.   (See \Rem{PHTHrem}.)   
\begin{lemma}\label{NORMWKlem}
Let $S\colon\ E \toto E^*$ be closed, monotone and quasidense, $\{(s_\beta,s_\beta^*)\}$ be a {\em bounded} net of elements of $G(S)$ and $(z,z^*) \in E \times E^*$.
\par\noindent
{\rm (a)} If $(s_\beta,s_\beta^*) \to (z,z^*)$ in ${\cal T}_E\ \times w(E^*,E)$ then $(z,z^*) \in G(S)$.
\par\noindent
{\rm (b)} If $(s_\beta,s_\beta^*) \to (z,z^*)$ in $w(E,E^*) \times {\cal T}_{E^*}$ then $(z,z^*) \in G(S)$.
\end{lemma}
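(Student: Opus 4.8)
The plan is to reduce everything to the Fitzpatrick function $\varphi_S$ and the characterization of $G(S)$ recorded in \Lem{PHISlem}: we have $\varphi_S(x,x^*) \ge \bra{x}{x^*}$ for all $(x,x^*) \in E \times E^*$, and $G(S)$ is exactly the set where equality holds. Since the inequality $\varphi_S(z,z^*) \ge \bra{z}{z^*}$ is automatic, it suffices in both parts to prove the reverse inequality $\varphi_S(z,z^*) \le \bra{z}{z^*}$; equality then forces $(z,z^*) \in G(S)$.

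The engine of the argument is that, by \Def{PHdef}, $\varphi_S$ is the pointwise supremum of the affine functions $(x,x^*) \mapsto \bra{s}{x^*} + \bra{x}{s^*} - \bra{s}{s^*}$, one for each $(s,s^*) \in G(S)$. In case (a), where $x$ carries ${\cal T}_E$ and $x^*$ carries $w(E^*,E)$, each of these functions is continuous, since $x \mapsto \bra{x}{s^*}$ is norm continuous and $x^* \mapsto \bra{s}{x^*}$ is $w(E^*,E)$--continuous; in case (b), where $x$ carries $w(E,E^*)$ and $x^*$ carries ${\cal T}_{E^*}$, the same functions are continuous for the symmetric reasons. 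A supremum of continuous functions is lower semicontinuous, so $\varphi_S$ is lower semicontinuous in ${\cal T}_E \times w(E^*,E)$ for (a) and in $w(E,E^*) \times {\cal T}_{E^*}$ for (b). Applying this along the convergent net $(s_\beta,s_\beta^*) \to (z,z^*)$ and using $\varphi_S(s_\beta,s_\beta^*) = \bra{s_\beta}{s_\beta^*}$ (from \Lem{PHISlem}, as $(s_\beta,s_\beta^*) \in G(S)$) gives
$$\varphi_S(z,z^*) \le \liminfn_\beta \varphi_S(s_\beta,s_\beta^*) = \liminfn_\beta \bra{s_\beta}{s_\beta^*}.$$

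It then remains to show that the diagonal pairing converges, $\bra{s_\beta}{s_\beta^*} \to \bra{z}{z^*}$, and this is the one place where the boundedness hypothesis is genuinely used. In case (a) I would split $\bra{s_\beta}{s_\beta^*} = \bra{s_\beta - z}{s_\beta^*} + \bra{z}{s_\beta^*}$: the first term is dominated by $\|s_\beta - z\|\,\|s_\beta^*\|$, which tends to $0$ because $\|s_\beta - z\| \to 0$ while $\{\|s_\beta^*\|\}$ is bounded, and the second tends to $\bra{z}{z^*}$ by $w(E^*,E)$--convergence of $s_\beta^*$. In case (b) I would split symmetrically as $\bra{s_\beta}{s_\beta^* - z^*} + \bra{s_\beta}{z^*}$ and argue identically, now using that $\{\|s_\beta\|\}$ is bounded and $\|s_\beta^* - z^*\| \to 0$. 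Either way the liminf above equals $\bra{z}{z^*}$, so $\varphi_S(z,z^*) \le \bra{z}{z^*}$, which completes the proof.

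I do not anticipate a deep obstacle: the lower semicontinuity of $\varphi_S$ falls out immediately from its description as a supremum of continuous affine functions, and the only step requiring real care is the convergence of $\bra{s_\beta}{s_\beta^*}$, where one must exploit both the boundedness of the net and the fact that exactly one coordinate converges in norm so that the cross term vanishes. It is worth noting, as the remark preceding the lemma already signals, that the argument invokes only \eqref{PHIS1} and never quasidensity as such, so the hypothesis can be relaxed to $S$ merely maximally monotone.
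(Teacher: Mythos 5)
Your proof is correct and follows essentially the same route as the paper: both arguments reduce to showing $\lim_\beta\bra{s_\beta}{s_\beta^*} = \bra{z}{z^*}$ (using boundedness to kill the cross term) and then invoke lower semicontinuity of $\varphi_S$ together with \eqref{PHIS1}, the paper phrasing the lsc step with respect to $w(E,E^*) \times w(E^*,E)$ while you use the finer mixed topologies, and splitting the pairing into three terms rather than your two --- purely cosmetic differences. Your closing observation that only \eqref{PHIS1}, and hence only maximal monotonicity, is needed matches the remark the paper makes just before the lemma.
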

\begin{proof}
Note that   
$\bra{s_\beta}{s_\beta^*} - \bra{z}{z^*} = \bra{s_\beta - z}{z^*} + \bra{s_\beta - z}{s_\beta^* - z^*} + \bra{z}{s_\beta^* - z^*}$.   In both cases, $\lim_\beta\bra{s_\beta - z}{z^*} = 0$ and $\lim_\beta\bra{z}{s_\beta^* - z^*} = 0$.   In case (a), we have $\lim_\beta\|s_\beta - z\| = 0$ and $\sup_\beta\|s_\beta^* - z^*\| < \infty$. In case (b), we have\break $\sup_\beta\|s_\beta - z\| < \infty$ and $\lim_\beta\|s_\beta^* - z^*\| = 0$.   Thus, in both cases, we have\break $\lim_\beta\bra{s_\beta - z}{s_\beta^* - z^*} = 0$, and so $\lim_\beta\bra{s_\beta}{s_\beta^*} = \bra{z}{z^*}$.   It now follows from \eqref{PHIS1} that $\lim_\beta\varphi_S(s_\beta,s_\beta^*) = \bra{z}{z^*}$.  Since $\varphi_S$ is $w(E,E^*) \times w(E^*,E)$ lower semicontinuous and $(s_\beta,s_\beta^*) \to (z,z^*)$ in $w(E,E^*) \times w(E^*,E)$, it follows that $\varphi_S(z,z^*) \le \bra{z}{z}$, and another application of \eqref{PHIS1} gives $(z,z^*) \in G(S)$.    
\end{proof}
\begin{lemma}\label{DISTlem}
{\rm(a)}\enspace Let $\WT$ be a nonempty $w(E^*,E)$--compact subset of $E^*$ and $\{s_\alpha^*\}$ be a net of elements of $E^*$ such that $\lim_\alpha\dist(s_\alpha^*,\WT) = 0$.   Then there exist $w^* \in \WT$ and a subnet $\{s_\beta^*\}$ of $\{s_\alpha^*\}$ such that $s_\beta^* \to w^*$ in $w(E^*,E)$.
\par\noindent
{\rm(b)}\enspace Let $W$ be a nonempty $w(E,E^*)$--compact subset of $E$ and $\{s_\alpha^*\}$ be a net of elements of $E$ such that $\lim_\alpha\dist(s_\alpha,W) = 0$.   Then there exist $w \in W$ and a subnet $\{s_\beta\}$ of $\{s_\alpha\}$ such that $s_\beta \to w$ in $w(E,E^*)$.
\par\noindent
\end{lemma}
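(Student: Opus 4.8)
The plan is to base both parts on a single fact: the norm is lower semicontinuous for the relevant weak topology, so that the distance to the compact set $\WT$ (resp. $W$) is actually attained at a point of the set, after which a routine subnet extraction finishes the argument. I would treat (a) in detail; (b) is obtained by interchanging the roles of $E$ and $E^*$ and using that the norm of $E$ is $w(E,E^*)$--lower semicontinuous, being the supremum of the $w(E,E^*)$--continuous functionals $\bra{\cdot}{x^*}$ with $x^*$ in the unit ball of $E^*$.

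First I would realize each distance by a point of $\WT$. For fixed $\alpha$, the map $w^* \mapsto \|s_\alpha^* - w^*\|$ is $w(E^*,E)$--lower semicontinuous (the dual norm is a supremum of the $w(E^*,E)$--continuous functionals $\bra{x}{\cdot}$, $x \in E_1$), and $\WT$ is $w(E^*,E)$--compact, so the minimum is attained: I choose $t_\alpha^* \in \WT$ with $\|s_\alpha^* - t_\alpha^*\| = \dist(s_\alpha^*,\WT)$. Since $\WT$ is $w(E^*,E)$--compact, the net $\{t_\alpha^*\}$ has a subnet $\{t_\beta^*\}$ with $t_\beta^* \to w^*$ in $w(E^*,E)$ for some $w^* \in \WT$. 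Along the corresponding subnet $\{s_\beta^*\}$ of $\{s_\alpha^*\}$, the hypothesis $\dist(s_\alpha^*,\WT) \to 0$ persists, so $\|s_\beta^* - t_\beta^*\| = \dist(s_\beta^*,\WT) \to 0$; hence $s_\beta^* - t_\beta^* \to 0$ in norm and therefore in $w(E^*,E)$. Writing $s_\beta^* = (s_\beta^* - t_\beta^*) + t_\beta^*$ and adding the two limits gives $s_\beta^* \to w^*$ in $w(E^*,E)$, which is the desired conclusion.

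The computations are elementary; the only delicate point --- and what I regard as the main obstacle --- is the bookkeeping with nets and subnets, namely making sure that the subnet chosen to make $\{t_\beta^*\}$ converge is the one indexing $\{s_\beta^*\}$, and that the relation $\dist(s_\beta^*,\WT) \to 0$ survives the passage to the subnet. The lower semicontinuity of the norm is exactly what lets the distance be attained, so no separate $\eps$--approximation of the compact set is needed, and part (b) follows verbatim with $\dist(s_\alpha,W)$, $W$, and $w(E,E^*)$ in place of $\dist(s_\alpha^*,\WT)$, $\WT$, and $w(E^*,E)$.
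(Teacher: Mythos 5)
Your proof is correct and follows essentially the same route as the paper's: choose points of the compact set witnessing the distance, extract a $w(E^*,E)$--convergent subnet using compactness, and transfer the convergence to $\{s_\beta^*\}$ via the norm convergence $\|s_\beta^* - t_\beta^*\| \to 0$. The only difference is that you invoke weak lower semicontinuity of the norm to obtain \emph{exact} nearest points, whereas the paper simply picks a net $\{w_\alpha^*\}$ in $\WT$ with $\|w_\alpha^* - s_\alpha^*\| \to 0$ (approximate nearest points), which makes the attainment argument unnecessary.
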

\begin{proof}
In case (a), there exists a net $\{w_\alpha^*\}$ of elements of $\WT$ such that\break $\lim_\alpha\|w_\alpha^* - s_\alpha^*\| = 0$.      Since $\WT$ is $w(E^*,E)$--compact, there exist $w^* \in \WT$ and a subnet $\{w_\beta^*\}$ of $\{w_\alpha^*\}$ such that $w_\beta^* \to w^*$ in $w(E^*
,E)$.   (a) follows since\break manifestly $s_\beta^* \to w^*$ in $w(E^*,E)$ also.   The proof of (b) is similar. 
\end{proof}
\begin{theorem}[Quasidensity implies ``strong maximality'']\label{STRONGthm}
Let $S\colon\ E \toto E^*$ be closed, monotone and quasidense.  Then, whenever $w \in E$ and $\WT$ is a nonempty $w(E^*,E)$--compact convex subset of $E^*$ such that, 
\begin{equation}\label{STRONG1}
\all\ (s,s^*) \in G(S),\ \max\bra{s - w}{s^* - \WT} \ge 0,\end{equation}
then $Sw \cap \WT \ne \emptyset$ and, further, whenever $W$ is a nonempty $w(E,E^*)$--compact convex subset of $E$, $w^* \in E^*$ and,
\begin{equation}\label{STRONG3}
\all\ (s,s^*) \in G(S),\ \max\bra{s - W}{s^* - w^*} \ge 0,
\end{equation}
then $w^* \in S(W)$.
\end{theorem}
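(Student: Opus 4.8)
The plan is to derive each half of the conclusion directly from the matching fuzzy criterion --- \Thm{FUZZDthm}(b) for the first assertion and \Thm{FUZZEthm}(b) for the second --- using the sign hypotheses \eqref{STRONG1} and \eqref{STRONG3} to annihilate the ``cross'' term in the fuzzy inequality, and then passing to the limit through \Lem{DISTlem} and \Lem{NORMWKlem}.

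For the first assertion I would argue as follows. Since $S$ is quasidense, \Thm{FUZZDthm}(b) applies; feeding it the given $w$ and $\WT$ together with $\eta = \nth$, I obtain $(s_n,s_n^*) \in G(S)$ with
$$\half\|s_n - w\|^2 + \half\dist(s_n^*,\WT)^2 + \max\Bra{s_n - w}{s_n^* - \WT} < \nth.$$
By \eqref{STRONG1} the third summand is $\ge 0$, and the first two are nonnegative, so each of the three is $< \nth$; hence $\|s_n - w\| \to 0$ and $\dist(s_n^*,\WT) \to 0$. Next I would invoke \Lem{DISTlem}(a) to extract $w^* \in \WT$ and a subnet $\{s_\beta^*\}$ with $s_\beta^* \to w^*$ in $w(E^*,E)$; along this subnet $s_\beta \to w$ in norm. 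The net $\{(s_\beta,s_\beta^*)\}$ is bounded, so \Lem{NORMWKlem}(a) yields $(w,w^*) \in G(S)$, giving $w^* \in Sw \cap \WT$ and hence $Sw \cap \WT \ne \emptyset$.

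The second assertion I would handle by the dual argument. Quasidensity gives \Thm{FUZZEthm}(b); applied to $W$ and $w^*$ with $\eta = \nth$ it produces $(s_n,s_n^*) \in G(S)$ with
$$\half\dist(s_n,W)^2 + \half\|s_n^* - w^*\|^2 + \max\bra{s_n - W}{s_n^* - w^*} < \nth.$$
Here \eqref{STRONG3} forces the cross term to be nonnegative, so $\dist(s_n,W) \to 0$ and $\|s_n^* - w^*\| \to 0$; \Lem{DISTlem}(b) then supplies a subnet with $s_\beta \to w$ in $w(E,E^*)$ for some $w \in W$, while $s_\beta^* \to w^*$ in norm, and \Lem{NORMWKlem}(b) gives $(w,w^*) \in G(S)$, whence $w^* \in S(W)$.

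These arguments are short because the substance lies in the fuzzy criteria and their dependence on the sum theorems. The one place needing care is the boundedness hypothesis of \Lem{NORMWKlem}: in each case one coordinate converges in norm (so is bounded) while the other converges weakly or weak-$*$ to a point of the norm-bounded compact set $\WT$ (resp.\ $W$), its distance to that set tending to $0$ (so it is bounded as well). I expect this boundedness verification, together with correctly matching topology (a) versus (b) of \Lem{NORMWKlem} to the two cases, to be the only genuine subtlety.
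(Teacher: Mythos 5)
Your proposal is correct and follows essentially the same route as the paper's own proof: quasidensity feeds the fuzzy criteria of \Thm{FUZZDthm}(b) and \Thm{FUZZEthm}(b), the sign hypotheses \eqref{STRONG1} and \eqref{STRONG3} kill the cross term so that the quadratic terms vanish along a sequence, and \Lem{DISTlem} plus \Lem{NORMWKlem} (parts (a) and (b) matched exactly as you matched them) produce the limit point in $G(S)$. Your explicit verification of the boundedness hypothesis in \Lem{NORMWKlem} is a point the paper leaves implicit, but it is the identical argument.
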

\begin{proof}
Let $w \in E$, $\WT$ be a nonempty $w(E^*,E)$--compact convex subset of $E^*$, and \eqref{STRONG1} be satisfied.   From \Thm{FUZZDthm}, for all $\eta > 0$, there exists $(s,s^*) \in G(S)$ such that
$$\half\|s - w\|^2 + \half\dist(s^*,\WT)^2 < \eta.$$
Thus there exist a sequence $\{(s_n,s_n^*)\}_{n \ge 1}$ of elements of $G(S)$ such that\break $\lim_n\|s_n - w\| = 0$ and $\lim_n\dist(s_n^*,\WT) = 0$.  From \Lem{DISTlem}(a), there\break exist $w^* \in \WT$ and a subnet $\{(s_\beta,s_\beta^*)\}$ of $\{(s_n,s_n^*)\}$ such that $s_\beta^* \to w^*$ in $w(E^*,E)$.   Obviously, $s_\beta \to w$ in ${\cal T}_E$ and so, from \Lem{NORMWKlem}(a), $(w,w^*) \in G(S)$.  So $Sw \cap \WT \ne \emptyset$, as required.    
\par
Now let $W$ be a nonempty $w(E,E^*)$--compact convex subset of $E$, $w^* \in E^*$ and \eqref{STRONG3} be satisfied.   From \Thm{FUZZEthm}, for all $\eta > 0$, there exists $(s,s^*) \in G(S)$ such that
$$\half\dist(s,W)^2 + \half\|s^* - w^*\|^2 < \eta.$$
Thus there exists a sequence $\{(s_n,s_n^*)\}_{n \ge 1}$ of elements of $G(S)$ such that\break $\lim_n\dist(s_n,W) = 0$ and $\lim_n\|s_n^* - w^*\| = 0$.   From \Lem{DISTlem}(b),  there exist $w \in W$ and a subnet $\{(s_\beta,s_\beta^*)\}$ of $\{(s_n,s_n^*)\}$ such that $s_\beta \to w$ in $w(E,E^*)$.   Obviously, $s_\beta^* \to w^*$ in ${\cal T}_{E^*}$ and so, from \Lem{NORMWKlem}(b), $(w,w^*) \in G(S)$.   So $w^* \in S(W)$, as required.
\end{proof}
\section{Sequential characterizatons of the Fitzpatrick extension}\label{FITZCHARsec}
The main result of this section, \Thm{SEQCHARthm}, contains three characterizations of the Fitzpatrick extension of a closed, monotone multifunction, two of them in terms of a {\em sequence} of elements from its graph.   \Thm{SEQCHARthm} is bootstrapped from \Lem{EXSEQlem}, which depends on the three preceding lemmas, about which we now make a few comments.
\par
The genesis for \Lems{ALLFlem} and \ref{SQlem} below is ultimately the sharpening by Gossez of a result established by Rockafellar in one of his proofs of the maximal monotonicity of subdifferentials  \big(see \cite[Lemma~3.1, pp.\ 376--377]{GOSSEZ} and \cite[Proposition 1, pp.\ 211--212]{RTRMMT}\big).   \Lem{FITZCHARlem} is a special case of a result from \cite{PARTONE}.   In what follows, we write  $\bigvee_{i = 0}^mf_i$ for $\max\{f_0, \dots, f_m\}$.  
\begin{lemma}\label{ALLFlem}
Let $X$ be a nonzero Banach space, $m \ge 1$, $f_0 \in \PCLSC(X)$ and $f_1,\dots,f_m$ be real, convex, continuous functions on $X$.   Suppose that there exists $x\dbs \in E\dbs$ such that, for all $i = 0, \dots, m$, ${f_i}\dbs(x\dbs) \le 0$.   Then, for all $\eps > 0$,    there exists $x \in X$ such that, for all $i = 0, \dots, m$, $f_i(x) \le \eps$.
\end{lemma}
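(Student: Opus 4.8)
The plan is to reduce the statement to the single inequality $\inf_X g \le 0$, where $g := \bigvee_{i=0}^m f_i = \max\{f_0,\dots,f_m\}$. Note that $g \in \PCLSC(X)$ (it is convex and lower semicontinuous, and finite on the nonempty set $\dom f_0$), and that the desired conclusion, namely ``$\all\ \eps>0$ $\exs\ x\in X$ with $f_i(x)\le\eps$ for all $i$'', is precisely the assertion that $g$ takes values below every $\eps>0$, i.e. that $\inf_X g \le 0$. The heart of the argument will be the sub-claim that for every $\mu=(\mu_0,\dots,\mu_m)$ in the simplex $\Delta := \{\mu\in\RR^{m+1}\colon \mu_i \ge 0,\ \sumn_i\mu_i = 1\}$ one has $\inf_X\sumn_i\mu_i f_i \le 0$.

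To prove the sub-claim I would first rewrite the hypothesis: since $f_i\dbs(x\dbs) = \supn_{x^*\in X^*}\big[\bra{x^*}{x\dbs} - f_i^*(x^*)\big] \le 0$, we obtain the pointwise bound $f_i^*(x^*) \ge \bra{x^*}{x\dbs}$ for every $x^* \in X^*$ and every $i$. Now fix $\mu\in\Delta$ and set $g_i := \mu_i f_i$. Because $f_1,\dots,f_m$ are real, convex and continuous while $f_0\in\PCLSC(X)$, the classical qualification (continuity of all but one summand) applies, so the conjugate of the sum equals the \emph{exact} infimal convolution of the conjugates; in particular
\begin{equation*}
\Big(\sumn_i\mu_i f_i\Big)^*(0) = \infn\Big\{\sumn_i g_i^*(y_i^*)\colon\ y_i^*\in X^*,\ \sumn_i y_i^* = 0\Big\}.
\end{equation*}
For $\mu_i>0$ we have $g_i^*(y_i^*) = \mu_i f_i^*(y_i^*/\mu_i) \ge \mu_i\bra{y_i^*/\mu_i}{x\dbs} = \bra{y_i^*}{x\dbs}$, while for $\mu_i = 0$ the only finite contribution forces $y_i^* = 0$, for which $g_i^*(y_i^*) = 0 = \bra{y_i^*}{x\dbs}$. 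Hence every admissible decomposition satisfies $\sumn_i g_i^*(y_i^*) \ge \bra{\sumn_i y_i^*}{x\dbs} = 0$, so $\big(\sumn_i\mu_i f_i\big)^*(0) \ge 0$ and therefore $\inf_X\sumn_i\mu_i f_i = -\big(\sumn_i\mu_i f_i\big)^*(0) \le 0$, as claimed.

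Finally I would deduce $\inf_X g \le 0$ from the sub-claim by a finite-dimensional separation. Consider the convex, upward-closed, nonempty set
\begin{equation*}
A := \big\{r\in\RR^{m+1}\colon\ \exs\ x\in X\ \st\ f_i(x)\le r_i\ \all\ i\big\}.
\end{equation*}
If $\delta := \inf_X g$ were strictly positive, then $A$ would be disjoint from the open convex set $\{r\colon r_i<\delta\ \all\ i\}$; separating the two produces $\mu\ne 0$ with all $\mu_i\ge 0$ (upward-closedness of $A$ forces nonnegative weights), which after normalization lies in $\Delta$ and satisfies $\delta \le \inf_A\bra{\mu}{\cdot} = \inf_X\sumn_i\mu_i f_i$. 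This contradicts the sub-claim, so $\delta\le 0$ and we are done. The main obstacle is the middle step: one must guarantee that the infimal-convolution (equivalently, conjugate-of-a-sum) formula holds \emph{with equality} even though $f_0$ is only lower semicontinuous and possibly infinite-valued. This is exactly where the continuity of $f_1,\dots,f_m$ enters as an Attouch--Brezis / Rockafellar type qualification, and it is the one place where the hypotheses must be verified with care.
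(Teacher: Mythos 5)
Your reduction to $\inf_X\bigvee_{i=0}^mf_i\le0$ and your sub-claim, including the appeal to the Rockafellar qualification for the conjugate-of-a-sum formula, are fine. The gap is in the final separation step: the asserted identity $\inf_A\bra{\mu}{\cdot}=\inf_X\sumn_i\mu_if_i$ is false in general when the separating functional has $\mu_0=0$, and that case cannot be excluded. A vector $r$ lies in $A$ only if it is witnessed by some $x$ with $f_0(x)\le r_0<\infty$, that is, $x\in\dom f_0$; hence, when $\mu_0=0$, one only gets $\inf_A\bra{\mu}{\cdot}=\inf_{x\in\dom f_0}\sumn_{i\ge1}\mu_if_i(x)$, which may strictly exceed $\inf_X\sumn_{i\ge1}\mu_if_i$. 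Concretely, take $X=\RR$, $f_0=\I_{\{0\}}$, $f_1(x)=1+x$: then $A=[0,\infty[\,\times\,[1,\infty[$, $\delta=1$, the only normalized separating functional is $\mu=(0,1)$, and $\inf_A\bra{\mu}{\cdot}=1$ while $\inf_Xf_1=-\infty$. (In this example the hypothesis of the lemma fails, so there is no conflict with the lemma itself; but it shows that the separation can genuinely output $\mu_0=0$, and that your identity, hence your contradiction, then evaporates. Your sub-claim bounds $\inf_X\sumn_{i\ge1}\mu_if_i$, which is the wrong quantity in this case.)

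The repair is to strengthen the sub-claim to $\inf_{\dom f_0}\sumn_i\mu_if_i\le0$ for every $\mu$ in the simplex, i.e., to run your conjugate argument with $\mu_0f_0+\I_{\dom f_0}$ in place of $\mu_0f_0$. When $\mu_0=0$ this requires the inequality $(\I_{\dom f_0})^*=\sigma\ge\bra{\cdot}{x\dbs}$ on $X^*$, where $\sigma:=\sup\bra{\dom f_0}{\cdot}$; equivalently, that $x\dbs$ lie in the $w(X\dbs,X^*)$--closure of the canonical image of $\dom f_0$. This does follow from your hypothesis: fixing $x_0^*\in\dom f_0^*$ and $y^*\in X^*$, one has $f_0^*(x_0^*+ty^*)\le f_0^*(x_0^*)+t\sigma(y^*)$ for all $t>0$, so $0\ge f_0\dbs(x\dbs)\ge\bra{x_0^*}{x\dbs}-f_0^*(x_0^*)+t\big[\bra{y^*}{x\dbs}-\sigma(y^*)\big]$ for all $t>0$, forcing $\bra{y^*}{x\dbs}\le\sigma(y^*)$. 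But this is a missing idea, not a routine verification, and without it the proof is incomplete. For comparison, the paper disposes of the lemma in two lines by citing the identity $\big(\bigvee_{i=0}^mf_i\big)\dbs=\bigvee_{i=0}^m{f_i}\dbs$ (Fitzpatrick--Simons), which gives $\inf_X\bigvee_if_i=-\big(\bigvee_if_i\big)^*(0)\le\big(\bigvee_if_i\big)\dbs(x\dbs)=\bigvee_i{f_i}\dbs(x\dbs)\le0$; what you are attempting is in effect a self-contained proof of the relevant instance of that identity, and the boundary case $\mu_0=0$ of the simplex is exactly where its nontrivial content sits.
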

\begin{proof}
We first observe that
\begin{equation*}
\ts\inf_X \bigvee_{i = 0}^m{f_i} = \bra{0}{x\dbs} -  \big(\bigvee_{i = 0}^m{f_i}\big)^*(0) \le \big(\bigvee_{i = 0}^m{f_i}\big)\dbs(x\dbs).
\end{equation*}
From \cite[Corollary 45.5, p.\ 174]{HBM} or \cite[Corollary~7, p.\ 3558]{FS}, $\ts\big(\bigvee_{i = 0}^mf_i\big)\dbs(x\dbs) = \bigvee_{i = 0}^m{f_i}\dbs(x\dbs)$ and, by hypothesis, $\bigvee_{i = 0}^m{f_i}\dbs(x\dbs) \le 0$.   Thus $\inf_X \bigvee_{i = 0}^m{f_i} \le 0$.   This gives the desired result.  
\end{proof}
\begin{lemma}\label{SQlem}
Let $f_0 \in \PCLSC(E \times E^*)$, $z\dbs \in E\dbs$, $f_0\dbs\big(z\dbs,0\big) \le 0$ and $z^* \in E^*$.   Then, for all $n \in \NN$, there exists $(x_n,x_n^*) \in E \times E^*$ such that   %
\begin{gather*}
f_0(x_n,x_n^*) \le 1/n^2,\\
\|x_n\| \le \|z\dbs\| + 1/n^2,\ \|x_n^*\| \le 1/n^2,\hbox{ and }|\bra{x_n}{z^*} - \bra{z^*}{z\dbs}| \le 1/n^2.
\end{gather*}

\end{lemma}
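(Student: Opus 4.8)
The plan is to reduce this to \Lem{ALLFlem}, applied to the Banach space $X := E \times E^*$. Since $(E \times E^*)^* = E^* \times E\dbs$, the bidual is $(E \times E^*)\dbs = E\dbs \times E\trs$, and the relevant bidual point will be $(z\dbs,0)$. I keep $f_0$ as the given function and introduce four real, convex, continuous functions on $E \times E^*$ to encode the three remaining estimates, using two of them for the two-sided bound: set $f_1(x,x^*) := \|x^*\|$, $f_2(x,x^*) := \|x\| - \|z\dbs\|$, $f_3(x,x^*) := \bra{x}{z^*} - \bra{z^*}{z\dbs}$ and $f_4 := -f_3$. If $(x,x^*)$ satisfies $f_i(x,x^*) \le \eps$ for $i = 1,2,3,4$, then immediately $\|x^*\| \le \eps$, $\|x\| \le \|z\dbs\| + \eps$ and $|\bra{x}{z^*} - \bra{z^*}{z\dbs}| \le \eps$; taking $\eps = 1/n^2$ recovers exactly the three asserted bounds.

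The crux is to check the single hypothesis of \Lem{ALLFlem}, namely that ${f_i}\dbs(z\dbs,0) \le 0$ for every $i = 0,\dots,4$. For $i = 0$ this is the assumption. For the others I would compute the Fenchel conjugate on $E^* \times E\dbs$ and then evaluate the biconjugate at $(z\dbs,0)$. A direct computation gives ${f_1}^* = \I_{\{0\} \times (E\dbs)_1}$, ${f_2}^* = \|z\dbs\| + \I_{(E^*)_1 \times \{0\}}$, ${f_3}^* = \bra{z^*}{z\dbs} + \I_{\{(z^*,0)\}}$ and ${f_4}^* = -\bra{z^*}{z\dbs} + \I_{\{(-z^*,0)\}}$. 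Because the second coordinate of the bidual point is $0 \in E\trs$, every term paired against an $E\dbs$--component of $(y^*,y\dbs)$ drops out, and one finds ${f_1}\dbs(z\dbs,0) = 0$, ${f_2}\dbs(z\dbs,0) = \sup_{\|y^*\| \le 1}\bra{y^*}{z\dbs} - \|z\dbs\| = 0$, and ${f_3}\dbs(z\dbs,0) = {f_4}\dbs(z\dbs,0) = \bra{z^*}{z\dbs} - \bra{z^*}{z\dbs} = 0$. Hence ${f_i}\dbs(z\dbs,0) \le 0$ for all $i$, as required.

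With the hypotheses verified, \Lem{ALLFlem} (with $m = 4$ and $\eps = 1/n^2$) produces, for each $n \in \NN$, a single point $(x_n,x_n^*) \in E \times E^*$ with $f_i(x_n,x_n^*) \le 1/n^2$ for all $i = 0,\dots,4$, and reading off the individual inequalities gives precisely the conclusion. I expect the only delicate step to be the conjugate bookkeeping in the middle paragraph: one must track which coordinate of $E^* \times E\dbs$ pairs with which coordinate of $E\dbs \times E\trs$, and it is exactly the choice of the second bidual coordinate as $0$ that forces each of ${f_1}\dbs,\dots,{f_4}\dbs$ to vanish at $(z\dbs,0)$, matching the form $f_0\dbs(z\dbs,0)$ in which the hypothesis is phrased.
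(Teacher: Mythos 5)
Your proposal is correct and takes essentially the same route as the paper: the paper's proof defines the same four auxiliary convex functions (with your $f_1$ and $f_2$ interchanged, and $f_4 = -f_3$ written out explicitly), asserts ${f_i}\dbs\big(z\dbs,0\big) \le 0$ for $i = 0,\dots,4$ ``by direct computation,'' and then invokes \Lem{ALLFlem} with $X = E \times E^*$ and $\eps = 1/n^2$. The only difference is that you carry out the conjugate and biconjugate computations that the paper leaves to the reader, and these are all correct.
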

\begin{proof}
Define the real, continuous convex functions $f_1,f_2,f_3,f_4$ on $E \times E^*$ by $f_1(x,x^*) := \|x\| - \|z\dbs\|$, $f_2(x,x^*) := \|x^*\|$, $f_3(x,x^*) := \bra{x}{z^*} - \bra{z^*}{z\dbs}$ and $f_4(x,x^*) := \bra{z^*}{z\dbs} - \bra{x}{z^*}$.   By direct computation, for all $i = 0, \dots, 4$, ${f_i}\dbs\big(z\dbs,0\big) \le 0$ (with equality when $i = 1, \dots, 4$).   The result now follows from \Lem{ALLFlem} with $X = E \times E^*$ and $\eps = 1/n^2$.
\end{proof}
\begin{lemma}\label{FITZCHARlem}
Let $g \in \PCLSC(E^* \times E\dbs)$,
%
\begin{equation}\label{FITZCHAR2}
(y^*,y\dbs) \in E^* \times E\dbs \qlr g(y^*,y\dbs) \ge \bra{y^*}{y\dbs},
\end{equation}
$(z^*,z\dbs) \in E \times E\dbs$, and
\begin{equation}\label{FITZCHAR1}
g(z^*,z\dbs) = \bra{z^*}{z\dbs}.
\end{equation}
Then
\begin{equation}\label{FITZCHAR3}
g^*(z\dbs,\wh{z^*}) = \bra{z^*}{z\dbs}.
\end{equation}
\end{lemma}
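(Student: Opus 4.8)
The plan is to compute $g^*(z\dbs,\wh{z^*})$ straight from the definition of the Fenchel conjugate and then establish \eqref{FITZCHAR3} by proving the two inequalities separately. First I would unpack the pairing. Since $(E^* \times E\dbs)^* = E\dbs \times E\trs$ and $\wh{z^*} \in E\trs$ is the canonical image of $z^* \in E^*$, the defining relation $\bra{y\dbs}{\wh{z^*}} = \bra{z^*}{y\dbs}$ rewrites the conjugate as
\[
g^*(z\dbs,\wh{z^*}) = \supn_{(y^*,y\dbs) \in E^* \times E\dbs}\big[\bra{y^*}{z\dbs} + \bra{z^*}{y\dbs} - g(y^*,y\dbs)\big].
\]
The inequality ``$\ge$'' is then immediate: substituting $(y^*,y\dbs) = (z^*,z\dbs)$ into the supremum and invoking \eqref{FITZCHAR1} leaves exactly $\bra{z^*}{z\dbs} + \bra{z^*}{z\dbs} - \bra{z^*}{z\dbs} = \bra{z^*}{z\dbs}$.

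The substantive part is the reverse inequality, and here it is enough to show that the affine function $A(y^*,y\dbs) := \bra{y^*}{z\dbs} + \bra{z^*}{y\dbs} - \bra{z^*}{z\dbs}$ minorizes $g$ everywhere, because $g \ge A$ makes every bracket in the supremum at most $\bra{y^*}{z\dbs} + \bra{z^*}{y\dbs} - A(y^*,y\dbs) = \bra{z^*}{z\dbs}$. Note that $A$ is precisely the tangent to the coupling $\bra{\cdot}{\cdot}$ at the touching point $(z^*,z\dbs)$. I would translate to that point by setting $\tilde h(u^*,u\dbs) := g(z^* + u^*,z\dbs + u\dbs) - A(z^* + u^*,z\dbs + u\dbs)$. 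A short computation shows $\tilde h$ is convex, $\tilde h(0,0) = 0$ by \eqref{FITZCHAR1}, and — since the coupling minus its tangent at $(z^*,z\dbs)$ is exactly the bilinear remainder $\bra{u^*}{u\dbs}$ — the hypothesis \eqref{FITZCHAR2} gives $\tilde h(u^*,u\dbs) \ge \bra{u^*}{u\dbs}$. A convexity--homogeneity argument then closes it: for $0 < t \le 1$, convexity together with $\tilde h(0,0) = 0$ yields $\tilde h(tu^*,tu\dbs) \le t\,\tilde h(u^*,u\dbs)$, while the coupling bound yields $\tilde h(tu^*,tu\dbs) \ge \bra{tu^*}{tu\dbs} = t^2\bra{u^*}{u\dbs}$; dividing by $t$ gives $t\bra{u^*}{u\dbs} \le \tilde h(u^*,u\dbs)$, and letting $t \downarrow 0$ forces $\tilde h \ge 0$, that is, $g \ge A$.

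The hard part is really just this last scaling step, and its crux is an order mismatch: the remainder between the coupling and its tangent is genuinely bilinear, so it scales like $t^2$ along rays from $(z^*,z\dbs)$, whereas convexity only bounds $\tilde h$ linearly in $t$; it is exactly this discrepancy that produces $\tilde h \ge 0$ in the limit. The case $\tilde h(u^*,u\dbs) = +\infty$ is trivial, so properness of $g$ raises no difficulty, and I note that lower semicontinuity of $g$ is not even needed for this direction. This gives a self-contained proof of \eqref{FITZCHAR3} that does not appeal to \Lems{ALLFlem} and \ref{SQlem}.
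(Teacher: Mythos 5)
Your proof is correct and coincides in essence with the paper's own argument: your convexity--homogeneity step (comparing $\tilde h(tu^*,tu\dbs)\le t\,\tilde h(u^*,u\dbs)$ with $\tilde h(tu^*,tu\dbs)\ge t^2\bra{u^*}{u\dbs}$ and letting $t\downarrow 0$) is exactly the paper's estimate along the convex combination $\lambda(y^*,y\dbs)+(1-\lambda)(z^*,z\dbs)$, divided by $\lambda$ and with $\lambda\to 0$, merely rewritten in coordinates translated to the touching point so that the minorizing affine function $A$ becomes explicit. Both proofs then obtain the reverse inequality identically, by evaluating the supremum defining $g^*(z\dbs,\wh{z^*})$ at $(z^*,z\dbs)$ and using \eqref{FITZCHAR1}.
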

\begin{proof}
Let $(y^*,y\dbs) \in E^* \times E\dbs$ and $\lambda \in \,]0,1[$.   From \eqref{FITZCHAR1}, the convexity of $g$ and \eqref{FITZCHAR2},    
\begin{align*}
\lambda g(y^*,y\dbs)
&= \lambda g(y^*,y\dbs) + (1 - \lambda)g(z^*,z\dbs) - (1 - \lambda)\bra{z^*}{z\dbs}\\
&\ge g\big(\lambda y^* + (1 - \lambda)z^*,\lambda y\dbs + (1 - \lambda)z\dbs\big) - (1 - \lambda)\bra{z^*}{z\dbs}\\
&\ge \Bra{\lambda y^* + (1 - \lambda)z^*}{\lambda y\dbs + (1 - \lambda)z\dbs} - (1 - \lambda)\bra{z^*}{z\dbs}\\
&= \lambda^2\bra{y^*}{y\dbs} + \lambda(1 - \lambda)\big[\bra{y^*}{z\dbs} + \bra{z^*}{y\dbs} - \bra{z^*}{z\dbs}\big]. 
\end{align*}
Thus, dividing by $\lambda$,
\begin{equation*}
g(y^*,y\dbs) \ge \lambda\bra{y^*}{y\dbs} + (1 - \lambda)\big[\bra{y^*}{z\dbs} + \bra{z^*}{y\dbs} - \bra{z^*}{z\dbs}\big].
\end{equation*}
Letting $\lambda \to 0$ and rearranging the terms,
\begin{equation*}
\bra{y^*}{z\dbs} + \bra{z^*}{y\dbs} - g(y^*,y\dbs) \le \bra{z^*}{z\dbs},
\end{equation*}
that is to say,
\begin{align*}
\Bra{(y^*,y\dbs)}{(z\dbs,\wh{z^*})} - g(y^*,y\dbs) \le \bra{z^*}{z\dbs}. 
\end{align*}
Taking the supremum over $(y^*,y\dbs) \in E^* \times E\dbs$,\quad  $g^*(z\dbs,\wh{z^*}) \le \bra{z^*}{z\dbs}$.\quad   On the other hand, from \eqref{FITZCHAR1} again,
\begin{align*}
g^*(z\dbs,\wh{z^*})
&\ge \Bra{(z^*,z\dbs)}{(z\dbs,\wh{z^*})} - g(z^*,z\dbs)\\
&= 2\bra{z^*}{z\dbs} - \bra{z^*}{z\dbs} = \bra{z^*}{z\dbs}. 
\end{align*}
This gives \eqref{FITZCHAR3}, and completes the proof of \Lem{FITZCHARlem}.   (This proof is based partly on the proof of \cite[Lemma 19.12, p.\ 82]{HBM}.)
\end{proof}
\begin{lemma}\label{EXSEQlem}
Let $T\colon\ E \toto E^*$ be closed monotone and quasidense and\break $(z^*,z\dbs) \in E^* \times E\dbs$.   Then {\em (a)$\ifff$(b)$\lr$(c)$\lr$(d)}:
\par
\noindent
{\rm(a)}\enspace $(z^*,z\dbs) \in G(T^\F)$.\par
\noindent
{\rm(b)}\enspace ${\varphi_T}^*(z^*,z\dbs) = \bra{z^*}{z\dbs}$.
\par
\noindent
{\rm(c)}\enspace ${\varphi_T}\dbs(z\dbs,\wh{z^*}) = \bra{z^*}{z\dbs}$.\par
\noindent
{\rm(d)}\enspace There exists a sequence $\{(t_n,t_n^*)\}_{n \ge 1}$ of elements of $G(T)$ such that\break $\lim_n\bra{t_n}{t_n^*} = \bra{z^*}{z\dbs}$ and $\lim_n\|t_n^* - z^*\| = 0$.
\end{lemma}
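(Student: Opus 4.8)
The plan is to handle the three implications separately: the first two are short, and (c)$\lr$(d) carries all the weight.

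The equivalence (a)$\ifff$(b) is immediate from \Def{FITZdef}: by \eqref{FITZ1}, $(z^*,z\dbs) \in G(T^\F)$ means exactly ${\varphi_T}^*(z^*,z\dbs) = \bra{z^*}{z\dbs}$. For (b)$\lr$(c) I would apply \Lem{FITZCHARlem} with $g := {\varphi_T}^*$. This $g$ lies in $\PCLSC(E^* \times E\dbs)$ (it is a Fenchel conjugate of a proper convex lsc function), and since $T$ is maximally monotone by \Thm{RLMAXthm} and quasidense, \Thm{PHTHthm} furnishes exactly the hypothesis \eqref{FITZCHAR2}, namely $g(y^*,y\dbs) \ge \bra{y^*}{y\dbs}$ for all $(y^*,y\dbs)$; hypothesis (b) is \eqref{FITZCHAR1}. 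The conclusion \eqref{FITZCHAR3} reads $g^*(z\dbs,\wh{z^*}) = \bra{z^*}{z\dbs}$, and since $g^* = ({\varphi_T}^*)^* = {\varphi_T}\dbs$ with argument $(z\dbs,\wh{z^*}) \in (E \times E^*)\dbs = E\dbs \times E\trs$, this is precisely (c).

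For (c)$\lr$(d) I would first shift into the range so that \Lem{SQlem} becomes applicable. Define $U\colon E \toto E^*$ by $U(x) := T(x) - z^*$; then $U$ is again closed, monotone and quasidense, and from \Def{PHdef} one computes directly that $\varphi_U(x,x^*) = \varphi_T(x,x^*+z^*) - \bra{x}{z^*}$. Taking conjugates in this identity (a routine computation) yields ${\varphi_U}\dbs(z\dbs,0) = {\varphi_T}\dbs(z\dbs,\wh{z^*}) - \bra{z^*}{z\dbs}$, so (c) gives ${\varphi_U}\dbs(z\dbs,0) = 0 \le 0$. I then apply \Lem{SQlem} with $f_0 := \varphi_U$: for each $n$ there is $(x_n,x_n^*) \in E \times E^*$ with $\varphi_U(x_n,x_n^*) \le 1/n^2$, $\|x_n\| \le \|z\dbs\| + 1/n^2$, $\|x_n^*\| \le 1/n^2$, and $|\bra{x_n}{z^*} - \bra{z^*}{z\dbs}| \le 1/n^2$.

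The point $(x_n,x_n^*)$ is only an \emph{approximate} element of $G(U)$, and the main obstacle is to replace it by a genuine graph element while keeping control of the location. Here I would invoke the quasidensity of $U$ at $(x_n,x_n^*)$ with tolerance $1/n^2$ to get $(t_n,s_n^*) \in G(U)$ with $\half\|t_n - x_n\|^2 + \half\|s_n^* - x_n^*\|^2 + \bra{t_n - x_n}{s_n^* - x_n^*} < 1/n^2$. The crucial point is that the cross term is bounded \emph{below}: inserting the specific graph element $(t_n,s_n^*)$ into the supremum defining $\varphi_U$ gives $\bra{t_n - x_n}{s_n^* - x_n^*} \ge \bra{x_n}{x_n^*} - \varphi_U(x_n,x_n^*)$, and both terms on the right are $O(1/n^2)$ since $\|x_n\|$ is bounded, $\|x_n^*\| \le 1/n^2$ and $\varphi_U(x_n,x_n^*) \le 1/n^2$. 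Combining this lower bound with the quasidensity inequality forces $\half\|t_n - x_n\|^2 + \half\|s_n^* - x_n^*\|^2 = O(1/n^2)$, whence $\|t_n - x_n\| \to 0$ and $\|s_n^* - x_n^*\| \to 0$.

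It remains to translate back. Put $t_n^* := s_n^* + z^*$, so $(t_n,t_n^*) \in G(T)$. Then $\|t_n^* - z^*\| = \|s_n^*\| \le \|s_n^* - x_n^*\| + \|x_n^*\| \to 0$, giving the second assertion of (d). For the first, write $\bra{t_n}{t_n^*} = \bra{t_n}{s_n^*} + \bra{t_n}{z^*}$; since $\{t_n\}$ is bounded and $s_n^* \to 0$ we have $\bra{t_n}{s_n^*} \to 0$, while $\bra{t_n}{z^*} \to \bra{z^*}{z\dbs}$ because $\|t_n - x_n\| \to 0$ and $\bra{x_n}{z^*} \to \bra{z^*}{z\dbs}$. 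Hence $\bra{t_n}{t_n^*} \to \bra{z^*}{z\dbs}$, completing (d). The only genuinely delicate step is the two-sided control of the cross term above; the rest is bookkeeping.
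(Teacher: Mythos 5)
Your proposal is correct and follows essentially the same route as the paper's own proof: (a)$\ifff$(b) from \eqref{FITZ1}, (b)$\lr$(c) via \Lem{FITZCHARlem} with $g := {\varphi_T}^*$ and \Thm{PHTHthm}, and (c)$\lr$(d) by shifting to $T - z^*$, invoking \Lem{SQlem}, and then using quasidensity together with the same lower bound on the cross term $\bra{t_n - x_n}{s_n^* - x_n^*}$ obtained by testing the supremum defining the Fitzpatrick function against the graph element. The only differences are expository (you spell out the conjugation identity ${\varphi_U}\dbs(z\dbs,0) = {\varphi_T}\dbs(z\dbs,\wh{z^*}) - \bra{z^*}{z\dbs}$ that the paper dismisses as ``easily seen''), so there is nothing to correct.
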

\begin{proof}
It is immediate from \eqref{FITZ1} that (a)$\ifff$(b).   It is also immediate from \Lem{FITZCHARlem} with $g := {\varphi_T}^*$ and \Thm{PHTHthm} that (b)$\lr$(c).   Now suppose that (c) is true.  Let $S := T - z^*$.    Clearly, $S$ is closed monotone and quasidense.   It is also easily seen that ${\varphi_S}\dbs\big(z\dbs,0\big) = 0$.\quad From \Lem{SQlem} with $f_0 = \varphi_S$, for all $n \in \NN$, there exists $(x_n,x_n^*) \in E \times E^*$ such that
\begin{gather}
\varphi_S(x_n,x_n^*) \le 1/n^2,\label{EXSEQ1}\\
\|x_n\| \le \|z\dbs\| + 1/n^2,\ \|x_n^*\| \le 1/n^2,\hbox{ and }|\bra{x_n}{z^*} - \bra{z^*}{z\dbs}| \le 1/n^2.
\label{EXSEQ2}
\end{gather}
From \Def{QDdef}, there exists $(s_n,s_n^*) \in G(S)$ such that
\begin{equation}\label{EXSEQ7}
\half\|s_n - x_n\|^2 + \half\|s_n^* - x_n^*\|^2 + \bra{s_n - x_n}{s_n^* - x_n^*} \le 1/n^2.
\end{equation}
Set $M := \|z\dbs\| + 1$.   From \eqref{EXSEQ2}, $\|x_n\| \le M$.  From \eqref{EXSEQ1} and \Def{PHdef}, $\bra{s_n}{x_n^*} + \bra{x_n}{s_n^*} - \bra{s_n}{s_n^*} \le 1/n^2$.   Combining this with \eqref{EXSEQ2}, we see that $\bra{s_n - x_n}{s_n^* - x_n^*} \ge \bra{x_n}{x_n^*} - 1/n^2 \ge -M/n^2 - 1/n^2$, and so \eqref{EXSEQ7} gives\quad $\half\|s_n - x_n\|^2 + \half\|s_n^* - x_n^*\|^2 \le M/n^2 + 2/n^2$.\quad Thus
\begin{equation}\label{EXSEQ8}
\limn_n\|s_n - x_n\| = 0\quand\limn_n\|s_n^* - x_n^*\| = 0.
\end{equation}
Combining \eqref{EXSEQ8} with \eqref{EXSEQ2}, $\sup_n\|s_n\| < \infty$ and $\lim_n\|s_n^*\| = 0$, from which $\lim_n\bra{s_n}{s_n^*} = 0$.   Now $\bra{s_n}{z^*} = \bra{s_n - x_n}{z^*} + \bra{x_n}{z^*}$ and so, from \eqref{EXSEQ8} and \eqref{EXSEQ2} again, $\lim_n\bra{s_n}{z^*} = \bra{z^*}{z\dbs}$.  Let $(t_n,t_n^*) := (s_n,s_n^* + z^*) \in G(T)$.   Clearly, $\lim_n\|t_n^* - z^*\| = \lim_n\|s_n^*\| = 0$ and 
$\lim_n\bra{t_n}{t_n^*} = \lim_n\bra{s_n}{s_n^* + z^*} = \lim_n\bra{s_n}{s_n^*} + \lim_n\bra{s_n}{z^*} = \bra{z^*}{z\dbs}$.
This completes the proof of (d).
\end{proof}
\begin{theorem}[Sequential characterizations of the Fitzpatrick extension]\label{SEQCHARthm}
Let $S\colon\ E \toto E^*$ be closed monotone and quasidense and $(z^*,z\dbs) \in E^* \times E\dbs$.   Then the following four conditions are equivalent:\par
\noindent
{\rm(a)}\enspace $(z^*,z\dbs) \in G(S^\F)$.\par
\noindent
{\rm(b)}\enspace ${\varphi_S}\dbs\big(z\dbs,\wh{z^*}\big) = \bra{z^*}{z\dbs}$.\par
\noindent
{\rm(c)}\enspace For all $w^* \in E^*$, there exists a sequence $\{(s_n,s_n^*)\}_{n \ge 1}$ of elements of $G(S)$ such that $\bra{s_n}{s_n^* - w^*} \to \bra{z^* - w^*}{z\dbs}$ and $\|s_n^* - z^*\| \to 0$ as $n \to \infty$.
\par
\noindent
{\rm(d)}\enspace For all $(w,w^*) \in E \times E^*$, there exists a sequence $\{(s_n,s_n^*)\}_{n \ge 1}$ of elements of $G(S)$ such that
\begin{equation}\label{SEQCHAR3}
\bra{s_n - w}{s_n^* - w^*} \to \bra{z^* - w^*}{z\dbs - \wh w}\hbox{ and }\|s_n^* - z^*\| \to 0\hbox{ as }n \to \infty.
\end{equation}
\end{theorem}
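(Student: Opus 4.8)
The plan is to prove the four conditions mutually equivalent via the cycle (a)$\Rightarrow$(b)$\Rightarrow$(d)$\Rightarrow$(c)$\Rightarrow$(a), using \Lem{EXSEQlem} as the engine and exploiting the fact that closedness, monotonicity and quasidensity are all invariant under translation of the graph, so that the free parameters $(w,w^*)$ can be absorbed and the argument reduced to the base case that \Lem{EXSEQlem} already handles. The implication (a)$\Rightarrow$(b) is then immediate: condition (a) here is condition (a) of \Lem{EXSEQlem} and condition (b) here is condition (c) of \Lem{EXSEQlem}, both taken with $T = S$, so the chain (a)$\ifff$(b)$\lr$(c) of \Lem{EXSEQlem} delivers it at once.

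For (b)$\Rightarrow$(d) I would fix $(w,w^*) \in E \times E^*$ and introduce $T\colon E \toto E^*$ defined by $G(T) := G(S) - (w,w^*)$, which is again closed, monotone and quasidense. The heart of the matter is a covariance computation: directly from \Def{PHdef} one finds $\varphi_T(x,x^*) = \varphi_S(x+w,x^*+w^*) - \bra{w}{x^*} - \bra{x}{w^*} - \bra{w}{w^*}$, and carrying this affine change through to the biconjugate on $E\dbs \times E\trs$ gives $\varphi_T\dbs\big(z\dbs - \wh w,\wh{z^* - w^*}\big) = \varphi_S\dbs(z\dbs,\wh{z^*}) - \bra{w}{z^*} - \bra{w^*}{z\dbs} + \bra{w}{w^*}$. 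Substituting (b) collapses the right--hand side to exactly $\bra{z^* - w^*}{z\dbs - \wh w}$, so condition (c) of \Lem{EXSEQlem} holds for $T$ at the shifted point $(z^* - w^*, z\dbs - \wh w)$. \Lem{EXSEQlem} then yields $(t_n,t_n^*) \in G(T)$ with $\bra{t_n}{t_n^*} \to \bra{z^* - w^*}{z\dbs - \wh w}$ and $\|t_n^* - (z^* - w^*)\| \to 0$, and putting $(s_n,s_n^*) := (t_n + w,\,t_n^* + w^*) \in G(S)$ produces a sequence satisfying \eqref{SEQCHAR3}. The implication (d)$\Rightarrow$(c) is then the trivial specialization $w = 0$.

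Finally, (c)$\Rightarrow$(a) closes the cycle. By \Thm{PHTHthm} we already have $\varphi_S^*(z^*,z\dbs) \ge \bra{z^*}{z\dbs}$, so it suffices to prove the opposite inequality. Fixing $(x,x^*) \in E \times E^*$, I would apply (c) with the decisive choice $w^* := x^*$, obtaining $(s_n,s_n^*) \in G(S)$ with $\bra{s_n}{s_n^* - x^*} \to \bra{z^* - x^*}{z\dbs}$ and $s_n^* \to z^*$ in norm. Since $\varphi_S(x,x^*) \ge \bra{s_n}{x^*} + \bra{x}{s_n^*} - \bra{s_n}{s_n^*}$ by \Def{PHdef}, we obtain $\bra{x}{z^*} + \bra{x^*}{z\dbs} - \varphi_S(x,x^*) \le \bra{x}{z^*} + \bra{x^*}{z\dbs} - \bra{x}{s_n^*} + \bra{s_n}{s_n^* - x^*}$, and the right--hand side tends to $\bra{z^*}{z\dbs}$. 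Taking the supremum over $(x,x^*)$ gives $\varphi_S^*(z^*,z\dbs) \le \bra{z^*}{z\dbs}$, whence (a) by \eqref{FITZ1}.

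I expect the main obstacle to be the covariance identity for $\varphi_T\dbs$ used in (b)$\Rightarrow$(d). The analogous identities for $\varphi_T$ and for the single conjugate $\varphi_T^*$ are routine changes of variable, but propagating the translation through the second conjugate onto the bidual, and verifying that the canonical images $\wh w$ and $\wh{w^*}$ occupy the correct coordinates of $E\dbs \times E\trs$, is where the real care lies; this is precisely the place at which, as the introduction cautions, the argument cannot avoid the bidual.
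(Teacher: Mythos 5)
Your proposal is correct, and it runs on the same engine as the paper's proof --- \Lem{EXSEQlem}, translation invariance of closedness, monotonicity and quasidensity, and a final supremum estimate against \Thm{PHTHthm} and \eqref{FITZ1} --- but the decomposition of the cycle is genuinely different. The paper proves (a)$\lr$(b)$\lr$(c)$\lr$(d)$\lr$(a): for (b)$\lr$(c) it translates only the dual variable, $T := S - w^*$, which needs merely the one--variable covariance identity ${\varphi_S}\dbs\big(z\dbs,\wh{z^*}\big) = {\varphi_T}\dbs\big(z\dbs,\wh{z^*} - \wh{w^*}\big) + \bra{w^*}{z\dbs}$, and then (c)$\lr$(d) is pure algebra: the \emph{same} sequence produced by (c) already satisfies (d), via the identity $\bra{s_n - w}{s_n^* - w^*} - \bra{z^* - w^*}{z\dbs - \wh w} = \bra{s_n}{s_n^* - w^*} - \bra{w}{s_n^* - z^*} + \bra{w^* - z^*}{z\dbs}$. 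You instead prove (b)$\lr$(d) in one shot with the full translation $G(T) := G(S) - (w,w^*)$, which forces you through the two--variable covariance identity for the biconjugate; I checked it, and your formula $\varphi_T\dbs\big(z\dbs - \wh w,\wh{z^* - w^*}\big) = \varphi_S\dbs\big(z\dbs,\wh{z^*}\big) - \bra{w}{z^*} - \bra{w^*}{z\dbs} + \bra{w}{w^*}$ is right, with the canonical images landing in the correct coordinates of $E\dbs \times E\trs$; your (d)$\lr$(c) is then the trivial specialization $w = 0$, and your (c)$\lr$(a) is essentially the same conjugate computation as the paper's (d)$\lr$(a). The trade--off: your route concentrates all the difficulty in the single covariance computation (precisely the step you flag as delicate), whereas the paper's route is arranged so that the primal shift $w$ never has to be pushed through the biconjugate at all --- it is absorbed by elementary manipulation of the sequences. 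That is the answer to your closing worry: the obstacle you anticipate is real, but the paper's ordering of the implications shows it can be sidestepped entirely.
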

\begin{proof}
It is immediate from the argument already used in \Lem{EXSEQlem} that (a)$\lr$(b).
\par
Now suppose that (b) is true and $(w,w^*) \in E \times E^*$.   Let $T := S - w^*$.   Clearly, $T$ is closed monotone and quasidense.\par   By direct computation,\quad ${\varphi_S}\dbs\big(z\dbs,\wh{z^*}\big) = {\varphi_T}\dbs\big(z\dbs,\wh{z^*} - \wh{w^*}\big) + \bra{w^*}{z\dbs}$.\quad   From (b), ${\varphi_T}\dbs\big(z\dbs,\wh{z^*} - \wh{w^*}\big) = \bra{z^* - w^*}{z\dbs}$.   From \Lem{EXSEQlem}\big((c)$\lr$(d)\big), there exists a sequence $\{(t_n,t_n^*)\}_{n \ge 1}$ of elements of $G(T)$ such that\break $\lim_n\bra{t_n}{t_n^*} = \bra{z^* - w^*}{z\dbs}$ and $\lim_n\|t_n^* - (z^* - w^*)\| = 0$.   Let\break $(s_n,s_n^*) := (t_n,t_n^* + w^*) \in G(S)$.   Then $\lim_n\bra{s_n}{s_n^* - w^*} = \bra{z^* - w^*}{z\dbs}$ and\break $\lim_n\|s_n^* - z^*\| = 0$, giving (c).
\par
Now suppose that (c) is true.   Then, for all $(w,w^*) \in E \times E^*$,
\begin{align*}
\bra{s_n - w}{s_n^* - w^*} &- \bra{z^* - w^*}{z\dbs - \wh w}\\
&= \bra{s_n}{s_n^* - w^*} - \bra{w}{s_n^* - z^*} + \bra{w^* - z^*}{z\dbs}\\
&\to \bra{z^* - w^*}{z\dbs} + 0  + \bra{w^* - z^*}{z\dbs} = 0.
\end{align*}
Thus \eqref{SEQCHAR3} is satisfied, and so (d) is true.
\par
Suppose, finally, that (d) is true.    Then, reversing the argument above,
\begin{equation*}
\bra{s_n}{s_n^* - w^*} - \bra{w}{s_n^* - z^*} + \bra{w^* - z^*}{z\dbs} \to 0,
\end{equation*}
from which
\begin{align*}
\bra{w}{z^*} + \bra{w^*}{z\dbs} &- \bra{z^*}{z\dbs}
= \limn_n\big[\bra{s_n}{w^*} + \bra{w}{s_n^*} - \bra{s_n}{s_n^*}\big]\\
&\le \supn_{(s,s^*) \in G(S)}\big[\bra{s}{w^*} + \bra{w}{s^*} - \bra{s}{s^*}\big] = \varphi_S(w,w^*).
\end{align*}
Consequently,\quad $\bra{w}{z^*} + \bra{w^*}{z\dbs} - \varphi_S(w,w^*) \le \bra{z^*}{z\dbs}$.   Taking the\break supremum over $(w,w^*)$,\quad ${\varphi_S}^*(z^*,z\dbs) \le \bra{z^*}{z\dbs}$,\quad and it follows from\break \Thm{PHTHthm} and \eqref{FITZ1} that (a) is true.
\end{proof}
\begin{remark}\label{STRONGERrem}
The equivalence of (a) and (b) above was established in\break\cite[Lemma 12.4(a), p.\ 1047]{PARTONE}.
\end{remark}
\section{Type (FP)}\label{FPsec}
\Lem{FPlem} below will simplify the computations in \Thm{FPthm} considerably:
\begin{lemma}\label{FPlem}
Let $T\colon\ E \toto E^*$ be closed, monotone and quasidense, $\VT$ be an open convex subset of $E^*$, $\VT \ni 0$, $\VT \cap R(T) \neq \emptyset$ and
\begin{equation}\label{FP1}
(t,t^*) \in G(T)\ \hbox{and}\ t^* \in \VT \qlr \bra{t}{t^*} \ge 0.
\end{equation}
Then $(0,0) \in G(T)$.  
\end{lemma}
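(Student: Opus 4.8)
The plan is to run, on the range side, the same construction that proves \Thm{FPVthm}, but feeding it through the range sum theorem \Thm{STRthm} instead of \Thm{STDthm}. First I would fix $y_0^* \in \VT \cap R(T)$ and, using that the segment $[0,y_0^*]$ is a norm-compact subset of the open set $\VT$, choose $\eps > 0$ so small that $\KT := [0,y_0^*] + \eps E^*_1 \subset \VT$, where $E^*_1$ is the closed unit ball of $E^*$. Then $\KT$ is a nonempty $w(E^*,E)$--compact convex subset of $E^*$ (a sum of a norm-compact set and a $w(E^*,E)$--compact set) with $\eps E^*_1 \subset \KT$, so that $0 \in \intr\KT$, and with $y_0^* \in \intr\KT$. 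Setting $A := \ptKT$, \eqref{DNORMAL2} and \eqref{DNORMAL3} give that $A$ is closed, monotone and quasidense with $R(A) = \KT$, so $y_0^* \in R(T) \cap \intr R(A)$. Hence \Thm{STRthm}, applied with the roles of $S,T$ there played by our $T,A$, shows that the multifunction $Q\colon E \toto E^*$ defined by $Q(y) := (T^\F + A^\F)^{-1}(\wh y)$ is closed, monotone and quasidense.

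The heart of the argument is to show that $\bra{y}{y^*} \ge 0$ for every $(y,y^*) \in G(Q)$. By definition of $Q$ there are $a\dbs \in T^\F(y^*)$ and $b\dbs \in A^\F(y^*)$ with $a\dbs + b\dbs = \wh y$, so $\bra{y}{y^*} = \bra{y^*}{\wh y} = \bra{y^*}{a\dbs} + \bra{y^*}{b\dbs}$. Since $A = \ptKT$, \Thm{FITZPARTAUthm} pins $y^*$ into $\KT \subset \VT$ and gives $\bra{y^*}{b\dbs} = \sup\Bra{\KT}{b\dbs} \ge \bra{0}{b\dbs} = 0$ because $0 \in \KT$. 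It remains to show $\bra{y^*}{a\dbs} \ge 0$, and this is where \eqref{FP1} enters: applying \Lem{EXSEQlem}, implication (a)$\lr$(d), to $(y^*,a\dbs) \in G(T^\F)$ produces a sequence $\{(t_n,t_n^*)\}$ in $G(T)$ with $\bra{t_n}{t_n^*} \to \bra{y^*}{a\dbs}$ and $t_n^* \to y^*$ in norm. As $y^* \in \VT$ and $\VT$ is open, $t_n^* \in \VT$ for large $n$, whence \eqref{FP1} gives $\bra{t_n}{t_n^*} \ge 0$ and, in the limit, $\bra{y^*}{a\dbs} \ge 0$. Combining, $\bra{y}{y^*} \ge 0$.

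To finish, I would observe that $\bra{y}{y^*} \ge 0$ for all $(y,y^*) \in G(Q)$ says exactly that $G(Q) \cup \{(0,0)\}$ is monotone; since $Q$ is maximally monotone by \Thm{RLMAXthm}, this yields $(0,0) \in G(Q)$. Decoding, there are $a\dbs \in T^\F(0)$ and $b\dbs \in A^\F(0)$ with $a\dbs + b\dbs = 0$. \Thm{FITZPARTAUthm} applied to $(0,b\dbs) \in G(A^\F)$ gives $\sup\Bra{\KT}{b\dbs} = \bra{0}{b\dbs} = 0$, and since $\eps E^*_1 \subset \KT$ this forces $\eps\|b\dbs\| \le 0$, i.e. $b\dbs = 0$, hence $a\dbs = 0$. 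Thus $(0,0) \in G(T^\F)$, and \eqref{FITZ2} finally gives $(0,0) \in G(T)$.

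The main obstacle is the middle paragraph. Because $R(A^\F)$ is not contained in $\wh E$ (the Fitzpatrick extension of a support-function subdifferential escapes the canonical image), I cannot invoke the clean parallel-sum conclusion of \Thm{STRthm} and must instead work with $Q$, whose graph is described through the bidual elements $a\dbs,b\dbs$. The helper $A = \ptKT$ does its job of pinning the common range value $y^*$ into $\VT$ for free via \Thm{FITZPARTAUthm}, but reconnecting the bidual component $a\dbs$ to the genuinely graph-theoretic hypothesis \eqref{FP1} is precisely what the sequential characterization \Lem{EXSEQlem} accomplishes. Everything else is the range-side mirror of the computation in \Thm{FPVthm}, with $\intr\KT \ni 0$ playing the role that the interior point of the constraint set played there.
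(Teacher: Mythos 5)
Your proposal is correct and is essentially the paper's own proof: the same set $\KT := [0,y_0^*] + \eps E^*_1$, the same appeal to \Thm{STRthm} to make $Q(y) := (T^\F + {\ptKT}^\F)^{-1}(\wh y)$ closed, monotone and quasidense, the same use of the Fitzpatrick--extension computation for $\ptKT$ together with \Lem{EXSEQlem} and \eqref{FP1} to obtain positivity, and the same endgame using $\eps E^*_1 \subset \KT$ to force the bidual component to vanish and then invoking \eqref{FITZ2}. The only cosmetic differences are that you verify $\bra{y}{y^*} \ge 0$ on all of $G(Q)$ and then cite \Thm{RLMAXthm}, whereas the paper inlines that maximality argument (quasidensity of its $P$ at $(0,0)$, then closedness as $\eta \to 0$), and that you cite \Thm{FITZPARTAUthm} where the lighter \Lem{FITZPARTAUlem} suffices --- the citation the paper deliberately prefers (cf.\ \Rem{FGrem}) to avoid routing the argument through \Thm{AFMAXthm}.
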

\begin{proof}
Let $y^* \in \VT \cap R(T)$.   Since the segment $[0,y^*]$ is a compact subset of the open set $\VT$, we can choose $\eps > 0$ so that\quad$\KT := [0,y^*] + \eps E^*_1 \subset \VT$.\quad
From \eqref{DNORMAL3},  $R(T) \cap \intr\,R(\partial\tauKT) = R(T) \cap \intr\,\KT \ni y^*$.  We now define the\break multifunction $P\colon\ E \toto E^*$ by $P(y) := (T^\F + \partial\tauKT^\F)^{-1}(\wh y)$.  \eqref{DNORMAL2} and \Thm{STRthm}\break imply that $P$ is closed and quasidense.   Let $\eta > 0$.   Then there exists\break $(y,z^*) \in G(P)$ such that
\begin{equation}\label{FP2}
\half\|y\|^2 + \half\|z^*\|^2 + \bra{y}{z^*} < \eta.
\end{equation}
We can choose $z\dbs \in T^\F(z^*)$ such that $\wh y - z\dbs \in {\partial\tauKT}^\F(z^*)$. From \Lem{FITZPARTAUlem}, $z^* \in \KT \subset \VT$ and $\bra{z^*}{\wh y - z\dbs} = \sup\Bra{\KT}{\wh y - z\dbs} \ge \eps\|\wh y - z\dbs\| \ge 0$.   Thus
\begin{equation}\label{FP3}
\bra{y}{z^*} = \bra{z^*}{\wh y} \ge \eps\|\wh y - z\dbs\| + \bra{z^*}{z\dbs} \ge  \bra{z^*}{z\dbs}.
\end{equation}
\par
From \Lem{EXSEQlem}, there exists a sequence $\{(t_n,t_n^*)\}_{n \ge 1}$ of elements of $G(T)$ such that\quad$\bra{t_n}{t_n^*} \to \bra{z^*}{z\dbs}$\quad and\quad $\|t_n^* - z^*\| \to 0$ as $n \to \infty$.\quad If $n$ is\break sufficiently large, $t_n^* \in \VT$, and so, from \eqref{FP1}, $\bra{t_n}{t_n^*} \ge 0$.   Passing to the limit, $\bra{z^*}{z\dbs} \ge 0$.  Combining this with \eqref{FP2} and \eqref{FP3},
\begin{equation*}
\half\|y\|^2 + \half\|z^*\|^2 < \eta.
\end{equation*}
Taking $\eta$ arbitrarily small and using the fact that $P$ is closed, we derive that $(0,0) \in G(P)$.   Repeating the argument already used above, we can choose $z_0\dbs \in T^\F(0)$ such that $\bra{0}{0} \ge \eps\|0 - z_0\dbs\|$.  Thus $z_0\dbs = 0$, from which\break $(0,0) = (0,z_0\dbs) \in G(T^\F)$, and so \eqref{FITZ2} implies that $(0,0) \in G(T)$.
\end{proof}
\begin{definition}\label{FPdef}
Let $S\colon\ E \toto E^*$ be monotone.   We say that $S$ is {\em of type (FP)} or {\em locally maximally monotone} if whenever $\UT$ is a convex open subset of $E^*$, $\UT \cap R(S) \neq \emptyset$, $(w,w^*) \in E \times \UT$ and
\begin{equation}\label{FP41}
(s,s^*) \in G(S)\ \hbox{and}\ s^* \in \UT \qlr \bra{s - w}{s^* - w^*} \ge 0,
\end{equation}
then $(w,w^*) \in G(S)$.  \big(If we take $\UT = E^*$, we see that every monotone multifunction of type (FP) is maximally monotone.\big)
\end{definition}
\begin{theorem}[The type (FP) criterion for quasidensity]\label{FPthm}
Let $S\colon\ E \toto E^*$ be maximally monotone.  Then the conditions {\em(a)}, {\em(b)} and {\em(c)} are equivalent.
\par\noindent
{\rm(a)}\enspace $S$ is quasidense.
\par\noindent
{\rm(b)}\enspace $S$ is of type (FP).  
\par\noindent
{\rm(c)}\enspace For all $(w^*,w\dbs) \in E^* \times E\dbs$, $\infn_{(s,s^*) \in G(S)}\bra{s^* - w^*}{\wh s - w\dbs} \le 0$.
\end{theorem}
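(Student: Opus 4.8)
\emph{Strategy.} The plan is to run the cycle (a)$\lr$(b)$\lr$(c)$\lr$(a), recording along the way the clean implication (a)$\lr$(c) so that (a)$\ifff$(c) is available. Of these, (a)$\lr$(b) and (c)$\lr$(a) are short consequences of the apparatus already assembled, and essentially all the content sits in recovering quasidensity from type (FP). For (a)$\lr$(b) I would simply translate \Lem{FPlem}. Given $\UT$, $(w,w^*)$ and \eqref{FP41} as in \Def{FPdef}, let $T$ be the multifunction with $G(T)=G(S)-(w,w^*)$ and put $\VT:=\UT-w^*$. Then $T$ is closed, monotone and quasidense, $\VT$ is open convex, $0\in\VT$ (since $w^*\in\UT$), and $\VT\cap R(T)=(\UT\cap R(S))-w^*\ne\emptyset$. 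Since $s^*\in\UT\ifff s^*-w^*\in\VT$, condition \eqref{FP41} for $S$ is precisely \eqref{FP1} for $T$, so \Lem{FPlem} gives $(0,0)\in G(T)$, that is, $(w,w^*)\in G(S)$.

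\emph{(c)$\lr$(a) and (a)$\lr$(c).} Expanding the pairing, for every $(s,s^*)\in G(S)$ one has $\bra{s^*-w^*}{\wh s-w\dbs}=\bra{s}{s^*}-\bra{s}{w^*}-\bra{s^*}{w\dbs}+\bra{w^*}{w\dbs}$, so (c) says exactly that $\sup_{(s,s^*)\in G(S)}\big[\bra{s}{w^*}+\bra{s^*}{w\dbs}-\bra{s}{s^*}\big]\ge\bra{w^*}{w\dbs}$ for all $(w^*,w\dbs)$. Since $S$ is maximally monotone, \eqref{PHIS1} (valid here by \Rem{PHTHrem}) gives $\varphi_S(s,s^*)=\bra{s}{s^*}$ for $(s,s^*)\in G(S)$, so by the definition of the conjugate ${\varphi_S}^*(w^*,w\dbs)\ge\bra{s}{w^*}+\bra{s^*}{w\dbs}-\varphi_S(s,s^*)$ at each graph point; taking the supremum and invoking the reformulation of (c) yields ${\varphi_S}^*(w^*,w\dbs)\ge\bra{w^*}{w\dbs}$, and \Thm{PHTHthm} gives quasidensity. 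For the converse, fix $(w^*,w\dbs)$; by \Thm{AFMAXthm} $S^\F$ is maximally monotone, so there is $(y^*,y\dbs)\in G(S^\F)$ with $\bra{y^*-w^*}{y\dbs-w\dbs}\le0$ (it is $(w^*,w\dbs)$ itself if that lies in the graph, otherwise a point destroying monotonicity). Applying condition (c) of \Thm{SEQCHARthm} to $(y^*,y\dbs)$ with this $w^*$ gives $\{(s_n,s_n^*)\}$ in $G(S)$ with $\bra{s_n}{s_n^*-w^*}\to\bra{y^*-w^*}{y\dbs}$ and $\|s_n^*-y^*\|\to0$; the expansion above then forces $\bra{s_n^*-w^*}{\wh{s_n}-w\dbs}\to\bra{y^*-w^*}{y\dbs-w\dbs}\le0$, so the infimum in (c) is $\le0$. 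This establishes (a)$\ifff$(c).

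\emph{The main obstacle: (b)$\lr$(c).} By (a)$\ifff$(c) it suffices to prove that the negation of (c) implies the negation of (b). Suppose (c) fails: there are $(w^*,w\dbs)$ and $\delta>0$ with $\bra{s^*-w^*}{\wh s-w\dbs}\ge\delta$ for all $(s,s^*)\in G(S)$. I would try to violate type (FP) at a point $(\bar w,w^*)$ with $\bar w\in E$. First, $(\bar w,w^*)\notin G(S)$ is automatic, since the standing inequality evaluated at $(\bar w,w^*)$ would read $0\ge\delta$. Rewriting the inequality as $\bra{s}{s^*-w^*}\ge\bra{s^*-w^*}{w\dbs}+\delta$, the type (FP) hypothesis \eqref{FP41} for $(\bar w,w^*)$ over an open convex $\UT\ni w^*$ reduces to $\bra{s^*-w^*}{w\dbs-\wh{\bar w}}\ge-\delta$ for all $(s,s^*)\in G(S)$ with $s^*\in\UT$. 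The task is therefore to choose $\bar w\in E$ and an open convex $\UT$, with $w^*\in\UT$ and $\UT\cap R(S)\ne\emptyset$, on which the fixed functional $w\dbs-\wh{\bar w}$ stays above $-\delta$ relative to $w^*$; type (FP) would then force $(\bar w,w^*)\in G(S)$, the required contradiction.

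This simultaneous choice is where I expect the difficulty to concentrate. When $w\dbs\in\wh E$ one may take $\bar w$ with $\wh{\bar w}=w\dbs$ and $\UT=E^*$, and the argument collapses to ordinary maximal monotonicity; but in general $w\dbs\notin\wh E$, the functional $w\dbs-\wh{\bar w}$ never vanishes and is unbounded below on unbounded sets, so $\UT$ must be taken bounded while $\bar w$ must approximate $w\dbs$ in the weak-$*$ sense of Goldstine's theorem, and at the same time $\UT$ must still meet $R(S)$ (which may require placing $\UT$ using the convexity of $\overline{R(S)}$ rather than as a ball about $w^*$). Reconciling a norm-bounded neighbourhood with a weak-$*$ approximation of a genuine bidual element is exactly the step that is forced onto $E\dbs$, and it is the one I expect to be hardest; this is consistent with the paper's own warning that the proof of \Thm{FPthm} ((a)$\ifff$(b)) ultimately depends on the bidual.
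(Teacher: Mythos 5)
Your implications (a)$\lr$(b) and (c)$\lr$(a) coincide with the paper's: the first is exactly the translation of \Lem{FPlem} by $(w,w^*)$, and the second is the same computation showing that (c) forces ${\varphi_S}^*(w^*,w\dbs) \ge \bra{w^*}{w\dbs}$ everywhere, followed by \Thm{PHTHthm}. Your separate argument for (a)$\lr$(c) via \Thm{AFMAXthm} and \Thm{SEQCHARthm}(c) is correct (and non-circular, provided \Thm{AFMAXthm} is taken from the citation to the earlier paper rather than from Appendix 1, whose sketch itself invokes \Thm{FPthm}\big((a)$\lr$(c)\big)), but it is redundant: once the cycle (a)$\lr$(b)$\lr$(c)$\lr$(a) closes, all equivalences follow, and plain contraposition --- not the equivalence (a)$\ifff$(c) --- is what converts (b)$\lr$(c) into $\neg$(c)$\lr\neg$(b).

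The genuine gap is (b)$\lr$(c), which you reduce correctly but never prove. Your reduction asks for $\bar w \in E$ and a bounded open convex $\UT \ni w^*$ meeting $R(S)$ on which $\bra{s^*-w^*}{w\dbs-\wh{\bar w}} \ge -\delta$, and you stop there, observing that this simultaneous choice is where the difficulty concentrates. It is; it is the entire content of the implication, and the paper resolves it with two devices absent from your sketch. First, $\bar w$ comes from \Lem{ALLFlem} (with $X = E$, $f_0 := \|\cdot\| - \|w\dbs\|$, $f_1 := y^* - \bra{y^*}{w\dbs}$, both with bidual value $\le 0$ at $w\dbs$), which yields $\|\bar w\| \le \|w\dbs\|+1$ together with $\bra{y^*}{\wh{\bar w} - w\dbs} \le \eta$ for \emph{one} chosen $y^* \in R(S) - w^*$: the point you miss is that $w\dbs - \wh{\bar w}$ need not be small on all of $\UT$ (impossible when $w\dbs \notin \wh E$), only controlled in the single direction $y^*$, with its norm merely bounded by $2\|w\dbs\|+1$. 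Second, $\UT$ is the tube $w^* + [0,y^*] + \{z^* \in E^*\colon \|z^*\| < \eta\}$ with $\eta := \delta/(2\|w\dbs\|+2)$; this automatically contains $w^*$ and meets $R(S)$, so your worry about needing the convexity of $\overline{R(S)}$ evaporates. Writing $s^* - w^* = \lambda y^* + z^*$ with $\lambda \in [0,1]$ and $\|z^*\| < \eta$ then gives $\bra{s^*-w^*}{w\dbs-\wh{\bar w}} \ge -\lambda\eta - (2\|w\dbs\|+1)\eta \ge -(2\|w\dbs\|+2)\eta = -\delta$, which is precisely what your reduction requires; type (FP) then forces $(\bar w,w^*) \in G(S)$, contradicting the standing inequality $0 \ge \delta$ at that point. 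Without this calibrated construction --- one-directional bidual approximation plus norm bound, against a tube of width $\eta$ --- your proposal establishes only (a)$\lr$(b) and (a)$\ifff$(c), and the implication carrying the theorem's main content remains open.
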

\begin{proof}
(a)$\lr$(b).   Let $\UT$ be an open convex subset of $E^*$, $\UT \cap R(S) \neq \emptyset$, $(w,w^*) \in E \times \UT$ and \eqref{FP41} be satisfied.   It follows easily from \Lem{FPlem} with $T$ defined so that $G(T) = G(S) - (w,w^*)$ and $\VT := \UT - w^*$ that $(w,w^*) \in G(S)$.
\smallbreak
(b)$\lr$(c).   Let $S$ be of type (FP) and $\infn_{(s,s^*) \in G(S)}\bra{s^* - w^*}{\wh s - w\dbs} > 0$.\break   We choose $\eps > 0$ so that $\infn_{(s,s^*) \in G(S)}\bra{s^* - w^*}{\wh s - w\dbs} > \eps$ and define\break $\eta := \eps/(2\|w\dbs\| + 2)$. 
Let $y^* \in R(T)$.   From \Lem{ALLFlem} with $X = E$, $f_0 := \|\cdot\| - \|w\dbs\|$ and $f_1 := y^* - \bra{y^*}{w\dbs}$ \big(so that ${f_0}\dbs(w\dbs) = {f_1}\dbs(w\dbs) = 0$\big) there exists $w \in E$ such that
\begin{equation}\label{THIN1}
\|w\| \le \|w\dbs\| + 1\hbox{ and }\bra{y^*}{\wh w - w\dbs} \le \eta.
\end{equation}
Let $T\colon\ E \toto E^*$ be defined so that $G(T) = G(S) - (w,w^*)$.   Then we have
%
\begin{equation}\label{THIN3}
(t,t^*) \in G(T) \qlr \bra{t^*}{\wh t + \wh w - w\dbs} > \eps.
\end{equation}
Let $\UT := [0,y^*] + \big\{z^* \in E^*\colon\ \|z^*\| < \eta\}$. $\UT$ is convex and open and  $\UT \cap R(T) \neq \emptyset$.   We now prove that
\begin{equation}\label{THIN2}
(t,t^*) \in G(T)\ \hbox{and}\ t^* \in \UT \qlr \bra{t}{t^*} \ge 0.
\end{equation}
To this end, let $(t,t^*) \in G(T)$ and $t^* \in \UT$.    Then there exists $\lambda \in [0,1]$ such that $\|t^* - \lambda y^*\| < \eta$.   Combining this with \eqref{THIN1} and \eqref{THIN3},
\begin{align*}
\bra{t}{t^*}
&= \bra{t^*}{\wh t + \wh w - w\dbs} - \bra{t^*}{\wh w - w\dbs} > \eps - \bra{t^*}{\wh w - w\dbs}\\
&= \eps - \bra{t^* - \lambda y^*}{\wh w - w\dbs} - \lambda\bra{y^*}{\wh w - w\dbs} \ge \eps - \|\wh w - w\dbs\|\eta - \lambda\eta\\
&\ge\eps - (2\|w\dbs\| + 1)\eta - \lambda\eta \ge \eps - (2\|w\dbs\| + 2)\eta = 0.
\end{align*}
This completes the proof of \eqref{THIN2}.
Clearly, $T$ is of type (FP) and so, from\break \Def{FPdef}, $(0,0) \in G(T)$.   But then \eqref{THIN3} would give $\bra{0}{\wh w - w\dbs} > \eps$, which is impossible.  
\smallbreak
(c)$\lr$(a).   From \Lem{PHISlem}, for all $(w^*,w\dbs) \in E^* \times E\dbs$, 
\begin{align*}
\infn_{(s,s^*) \in G(S)}&\bra{s^* - w^*}{\wh s - w\dbs}\\
&= \bra{w^*}{w\dbs} + \infn_{(s,s^*) \in G(S)}[\bra{s}{s^*} - \bra{s}{w^*} - \bra{s^*}{w\dbs}]\\
&= \bra{w^*}{w\dbs} + \infn_{(s,s^*) \in G(S)}[\varphi_S(s,s^*) - \bra{s}{w^*} - \bra{s^*}{w\dbs}]\\
&\ge \bra{w^*}{w\dbs} + \infn_{(x,x^*) \in E \times E^*}[\varphi_S(x,x^*) - \bra{x}{w^*} - \bra{x^*}{w\dbs}]\\
&= \bra{w^*}{w\dbs} - {\varphi_S}^*(w^*,w\dbs). 
\end{align*}
Thus (c) implies that, for all $(w^*,w\dbs) \in E^* \times E\dbs$, ${\varphi_S}^*(w^*,w\dbs) \ge \bra{w^*}{w\dbs}$, and it follows from \Thm{PHTHthm} that $S$ is quasidense.
\end{proof}
\begin{remark}\label{NIrem}
Condition (c) above is exactly that {\em $S$ is of type (NI).}   So the fact that (a)$\ifff$(c) above can easily be deduced from the results proved by Marques Alves and Svaiter in \cite[Theorem~1.2(1$\ifff$5), p.\ 885]{ASS} or Voisei and Z\u{a}linescu in\break \cite[Theorem 4.1, pp.\ 1027--1028]{VZ}.   Of course the conditions contained in\break \cite[Theorem~1.2 (3 and 4)]{ASS} are closely related to our definition of quasidensity.   See \cite[Theorem 6.10, p.\ 1031]{PARTONE} for a more general result.  The implication (b)$\lr$(c) above was established by Bauschke, Borwein, Wang and Yao in\break \cite[Theorem~3.1, pp.\ 1878--1879]{BBWYFP}.   It would be nice to find a proof of (a)$\lr$(b) above free of the complexities of \Sec{FITZCHARsec}, but this seems a hard problem.
\par
So the equivalences outlined in \Thm{FPthm} are already in the literature, but the approach outlined in this paper shows that \Thms{FUZZDthm} and \ref{FUZZEthm} give additional information about these classes of maximal monotone multifunctions and the equivalent classes ``type (D)'', ``dense type'' and ``type (ED)'', as well as those that satisfy the ``negative alignment criterion'' of \cite[Theorem 11.6, p.\ 1045]{PARTONE}.
\section{Appendix 1}\label{APPsec}
In Appendix 1, we discuss the function $\theta_S$ briefly, show the connection with the Gossez extension, and give a self--contained proof of \Thm{AFMAXthm}.
\par 
Let $S$ be closed, monotone and quasidense. We define the function\break $\theta_S\colon\ E^* \times E\dbs \to \rbar$ by
\begin{equation*}
\theta_S(w^*,w\dbs):= \supn_{(s,s^*) \in G(S)}\big[\bra{s}{w^*} + \bra{s^*}{w\dbs} - \bra{s}{s^*}\big].
\end{equation*}
Then condition \Thm{FPthm}(c) can be put in the equivalent form:
\begin{equation}\label{NI1}
\all\ (w^*,w\dbs) \in E^* \times E\dbs,\ \theta_S(w^*,w\dbs) \ge \bra{w^*}{w\dbs}.
\end{equation} 
Now let $(w^*,w\dbs) \in E^* \times E\dbs$.  An examination of the proof that (c)$\lr$(a) in \Thm{FPthm} shows that\quad ${\varphi_S}^*(w^*,w\dbs) \ge \theta_S(w^*,w\dbs)$.\quad  See \cite[Eq. (21),\break p.\ 1029]{PARTONE}.   We also have
\begin{align*}
{\theta_S}^*(w\dbs,\wh{w^*}) &= \supn_{(x^*,x\dbs) \in E^* \times E\dbs}\big[\bra{x^*}{w\dbs} + \bra{w^*}{x\dbs} - \theta_S(x^*,x\dbs)\big]\\
&\ge \supn_{(x,x^*) \in E \times E^*}\big[\bra{x^*}{w\dbs} + \bra{x}{w^*} - \theta_S(x^*,\wh x)\big].
\end{align*}
Since $\theta_S(x^*,\wh x) = \varphi_S(x,x^*)$,\quad it follows that\quad  ${\theta_S}^*(w\dbs,\wh{w^*}) \ge {\varphi_S}^*(w^*,w\dbs)$.\quad Consequently, using \eqref{NI1},
\begin{equation}\label{NI2}
{\theta_S}^*(w\dbs,\wh{w^*}) \ge {\varphi_S}^*(w^*,w\dbs) \ge \theta_S(w^*,w\dbs) \ge \bra{w^*}{w\dbs}.
\end{equation} 
From \Lem{FITZCHARlem}, $\theta_S(z^*,z\dbs) = \bra{z^*}{z\dbs}$ implies that ${\theta_S}^*\big(z\dbs, \wh{z^*}\big) = \bra{z^*}{z\dbs}$.   Combining this with \eqref{NI2} and \eqref{FITZ1}, we see that
\begin{align}
G(S^\F) &:= \{(w^*,w\dbs)\colon\ {\varphi_S}^*(w^*,w\dbs) = \bra{w^*}{w\dbs}\}\notag\\
&= \{(w^*,w\dbs)\colon\ \theta_S(w^*,w\dbs) = \bra{w^*}{w\dbs}\}\label{NI3}\\
&= \{(w^*,w\dbs)\colon\ {\theta_S}^*\big(w\dbs,\wh{w^*}\big) = \bra{w^*}{w\dbs}\}.\label{NI4}
\end{align}
In particular, using the definition of $\theta_S$, $(y^*,y\dbs) \in G(S^\F)$ exactly when $(y\dbs,y^*)$ is in the {\em Gossez extension} of $G(S)$ \big(see \cite[Lemma~2.1,p. 275]{GOSSEZ}\big).
\par
Finally, we show how \eqref{NI3} leads to a proof of \Thm{AFMAXthm}.   To this end, let $(w^*,w\dbs) \in E^* \times E\dbs$ and $\inf_{(z^*,z\dbs) \in G(S^\F)}\bra{z^* - w^*}{z\dbs - w\dbs} \ge 0$.   From \eqref{FITZ2}, $\inf_{(t,t^*) \in G(S)}\bra{t^* - w^*}{\wh t - w\dbs} \ge 0$. It follows from \Thm{FPthm}\big((a)$\lr$(c)\big) that $\inf_{(t,t^*) \in G(S)}\bra{t^* - w^*}{\wh t - w\dbs} = 0$, and \eqref{NI3} now implies that $(w^*,w\dbs) \in G(S^\F)$.
\end{remark}
\section{Appendix 2}\label{FPALTsec}
In Appendix 2, we give a proof of \Lem{FPlem} that does not use \Thm{STRthm}, but uses instead Rockafellar's formula for the conjugate of a sum and version of the Fenchel duality theorem.
\begin{lemma}\label{Tlem}
Let $T\colon\ E \toto E^*$ be closed, monotone and quasidense and $\KT$ be a $w(E^*,E)$--compact convex subset of $E^*$ such that $R(T) \cap \intr \KT \ne \emptyset$.   Then there exist $z^* \in \KT$ and $z\dbs,x\dbs \in E\dbs$ such that
\begin{gather}
{\varphi_T}^*(z^*,z\dbs) + \sup\bra{\KT}{x\dbs - z\dbs} + \half\|(z^*,x\dbs)\|^2 \le 0,\label{T1}\\
\noalign{\noindent and}
z\dbs \in T^\F(z^*)\hbox{ and }\bra{z^*}{z\dbs} + \sup\bra{\KT}{x\dbs - z\dbs} + \half\|(z^*,x\dbs)\|^2 \le 0.\label{T2}
\end{gather}  
\end{lemma}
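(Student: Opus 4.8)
My plan is to isolate the substantive content in \eqref{T1} and then deduce \eqref{T2} almost for free from a pointwise lower bound.

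\emph{Reduction of \eqref{T2} to \eqref{T1}.} First I would record that the left-hand side of \eqref{T1} is nonnegative for \emph{every} admissible triple. Fix $z^* \in \KT$ and $z\dbs,x\dbs \in E\dbs$. Since $T$ is maximally monotone (by \Thm{RLMAXthm}) and quasidense, \Thm{PHTHthm} gives ${\varphi_T}^*(z^*,z\dbs) \ge \bra{z^*}{z\dbs}$. As $z^* \in \KT$, we have $\sup\Bra{\KT}{x\dbs - z\dbs} \ge \Bra{z^*}{x\dbs - z\dbs} = \bra{z^*}{x\dbs} - \bra{z^*}{z\dbs}$. Finally $\half\|(z^*,x\dbs)\|^2 = \half\|z^*\|^2 + \half\|x\dbs\|^2 \ge \|z^*\|\,\|x\dbs\| \ge -\bra{z^*}{x\dbs}$. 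Adding these three inequalities, the left-hand side of \eqref{T1} is $\ge 0$. Consequently, once \eqref{T1} is established for some $z^* \in \KT$ and $z\dbs,x\dbs \in E\dbs$, every one of the three inequalities must in fact be an equality; in particular ${\varphi_T}^*(z^*,z\dbs) = \bra{z^*}{z\dbs}$, so $(z^*,z\dbs) \in G(T^\F)$ by \eqref{FITZ1}, i.e.\ $z\dbs \in T^\F(z^*)$. Replacing ${\varphi_T}^*(z^*,z\dbs)$ by $\bra{z^*}{z\dbs}$ in \eqref{T1} then yields \eqref{T2}. Thus it suffices to produce $z^* \in \KT$ and $z\dbs,x\dbs \in E\dbs$ satisfying \eqref{T1}.

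\emph{Proof of \eqref{T1} by Fenchel duality.} I would obtain \eqref{T1} as the dual of a convex minimization problem over $E \times E^*$. Recall from \Ex{DNORMALex} that ${\tauKT}^* = \IKT$, whence ${\IKT}^*(y\dbs) = \sup\Bra{\KT}{y\dbs}$ on $E\dbs$. Take $f := \varphi_T$, so that $f^* = {\varphi_T}^*$, and take $g\colon\ E \times E^* \to \rbar$ to be the convex function assembled from the Moreau envelope of $\tauKT$ and the quadratic $\half\|\cdot\|^2$, arranged so that its conjugate reproduces $\IKT(z^*) + \sup\Bra{\KT}{x\dbs - z\dbs} + \half\|(z^*,x\dbs)\|^2$; the extra bidual variable $x\dbs$ enters precisely as the minimizing point of the infimal convolution coming from ${\IKT}^* = \sup\Bra{\KT}{\cdot}$. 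The Fenchel dual of $\inf_{E\times E^*}(f+g)$, namely $-\min_{(z^*,z\dbs)}\big[{\varphi_T}^*(z^*,z\dbs) + g^*(-z^*,-z\dbs)\big]$, is then $-\infn$ of the left-hand side of \eqref{T1} taken over $(z^*,z\dbs,x\dbs)$, and the $\IKT$ factor forces any dual optimizer to have $z^* \in \KT$ (where that factor vanishes).

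Two verifications then complete the argument. On the primal side I would check $\inf_{E\times E^*}(f+g) \ge 0$; this is the ``quasidensity'' input, coming from $\varphi_T(x,x^*) \ge \bra{x}{x^*}$ (\Lem{PHISlem}) together with an elementary estimate for $g$. On the duality side I would invoke a version of the Fenchel duality theorem, together with Rockafellar's formula for the conjugate of a sum, \cite[Theorem~3(a), pp.\ 85--86]{FENCHEL}, to conclude that strong duality holds and that the dual optimum is \emph{attained}. The requisite constraint qualification is exactly the hypothesis $R(T) \cap \intr\KT \neq \emptyset$, which, since $R(\ptKT) = \KT$ by \eqref{DNORMAL3}, is the relevant interior-point condition. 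Attainment then supplies the genuine triple $(z^*,z\dbs,x\dbs)$ (the $x\dbs$-attainment using $w(E\dbs,E^*)$-lower semicontinuity and coercivity of the inner Moreau envelope), while strong duality together with the primal bound $\ge 0$ forces the dual value to be $\le 0$, which is \eqref{T1}.

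\emph{Main obstacle.} The delicate step is the duality argument of the previous paragraph: identifying $g$ so that its conjugate is exactly the infimal convolution yielding the $x\dbs$ term, translating the interior-point hypothesis into the qualification demanded by \cite[Theorem~3(a)]{FENCHEL}, and — most importantly — securing \emph{attainment} of the dual optimum rather than a mere infimum, since the lemma asserts the existence of actual elements $z^*,z\dbs,x\dbs$. Verifying the primal bound $\inf_{E\times E^*}(f+g) \ge 0$ and keeping the sign conventions straight (which of $\KT$ or $-\KT$, and which Moreau envelope) is routine but must be carried out carefully.
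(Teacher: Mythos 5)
Your proposal is correct, and at bottom it runs on the same machinery as the paper's own (Appendix 2) proof: Rockafellar's Fenchel duality theorem together with an exact conjugate-of-sum / infimal-convolution identity, with the hypothesis $R(T) \cap \intr\,\KT \ne \emptyset$ serving as the constraint qualification, the quadratic supplying the sign bound and coercivity, and \Thm{PHTHthm} with \eqref{FITZ1} converting \eqref{T1} into \eqref{T2}. The decomposition, however, is genuinely different, so it is worth comparing. The paper keeps the second function of the Fenchel pair trivial, namely $\half\|\cdot\|^2$, and loads all the structure into the primal function $f(x,x^*) := \inf_{y \in E}\big[\varphi_T(y,x^*) + \max\bra{x - y}{\KT}\big]$ for $x^* \in \KT$ ($:= \infty$ otherwise); there the inner variable $z\dbs$ is produced, with attainment, by Rockafellar's conjugate-of-sum formula applied to $(x,x^*) \mapsto \varphi_T(x,x^*) + \IKT(x^*)$ inside the computation of $f^*$, while the pair $(z^*,x\dbs)$ comes from dual attainment in the duality theorem. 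You do the opposite: $\varphi_T$ is left untouched and the envelope structure is placed in the second function, so $(z^*,z\dbs)$ come from dual attainment and $x\dbs$ from the inner minimization. Your plan does close, and the sign bookkeeping resolves as follows: take
\begin{equation*}
g(x,x^*) := \minn_{y \in E}\big[\maxn\Bra{x - y}{-\KT} + \half\|y\|^2\big] + \IKT(x^*) + \half\|x^*\|^2,
\end{equation*}
which is separable, so that $g^*(u^*,u\dbs) = \I_{-\KT}(u^*) + \half\|u^*\|^2 + \minn_{v\dbs}\big[\sup\bra{\KT}{v\dbs} + \half\|u\dbs - v\dbs\|^2\big]$, the minimum being attained either by your Banach--Alaoglu argument or by the Moreau--Rockafellar theorem (the quadratic is everywhere continuous); substituting $v\dbs = x\dbs - z\dbs$ gives $g^*(-z^*,-z\dbs) = \IKT(z^*) + \minn_{x\dbs}\big[\sup\bra{\KT}{x\dbs - z\dbs} + \half\|(z^*,x\dbs)\|^2\big]$, so a dual solution with value $\le 0$ is precisely a triple satisfying \eqref{T1}. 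The primal bound is your ``elementary estimate'': for $x^* \in \KT$ and every $y \in E$, \Lem{PHISlem} and $\max\Bra{x - y}{-\KT} \ge \bra{y - x}{x^*}$ give $\varphi_T(x,x^*) + \max\Bra{x - y}{-\KT} + \half\|y\|^2 + \half\|x^*\|^2 \ge \bra{y}{x^*} + \half\|y\|^2 + \half\|x^*\|^2 \ge 0$, and taking the infimum over $y$ yields $\varphi_T + g \ge 0$. One point needs more care than your sketch suggests: the constraint qualification cannot be certified through $\varphi_T$, which need not be continuous anywhere; it must be certified through $g$, and this is exactly where the hypothesis enters --- any $(t,t^*) \in G(T)$ with $t^* \in \intr\,\KT$ has $\varphi_T(t,t^*) = \bra{t}{t^*}$ finite by \Lem{PHISlem}, while $g$ is norm-continuous at $(t,t^*)$ because $\IKT$ vanishes on a neighborhood of $t^*$ (the appeal to \eqref{DNORMAL3} is not what does the work). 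Finally, your reduction of \eqref{T2} to \eqref{T1} --- the pointwise inequality showing the left side of \eqref{T1} is $\ge 0$ whenever $z^* \in \KT$, so that \eqref{T1} forces equality and in particular ${\varphi_T}^*(z^*,z\dbs) = \bra{z^*}{z\dbs}$ --- is correct and coincides with the paper's closing step.
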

\begin{proof}
For all $(x,x^*) \in E \times E^*$, let $h(x,x^*) := \IKT(x^*)$.   Clearly, $\dom\,h = E \times \KT$, and so $h \in \PCLSC(E \times E^*)$.   Also, for all $(y^*,y\dbs) \in E^* \times E\dbs$,
\begin{equation}\label{T3}
h^*(y^*,y\dbs) = \I_{\{0\}}(y^*) + \sup\bra{\KT}{y\dbs}.
\end{equation}  
If $(t,t^*) \in G(T)$ and $t^* \in \intr\,\KT$ then, from \Lem{PHISlem}, $\varphi_T(t,t^*) = \bra{t}{t^*} \in \RR$ and so $\dom\,\varphi_T \cap \intr\,\dom\,h = \dom\,\varphi_T \cap (E \times \intr\,\KT) \ne \emptyset$.   Thus, from Rockafellar's formula for the conjugate of a sum, \cite[Theorem~3(a), pp.\ 85--86]{FENCHEL}, and \eqref{T3}, for all $(z^*,x\dbs) \in E^* \times E\dbs$,
\begin{align}
&\supn_{(y,x^*) \in E \times \KT}\big[\bra{y}{z^*} + \bra{x^*}{x\dbs} - \varphi_T(y,x^*)\big]\notag\\
&\enspace= \supn_{(y,x^*) \in E \times E^*}\big[\bra{y}{z^*} + \bra{x^*}{x\dbs} - (\varphi_T + h)(y,x^*)\big]\notag\\
&\enspace= (\varphi_T + h)^*(z^*,x\dbs).\notag\\
&\enspace= \minn_{(w^*,z\dbs) \in E^* \times E\dbs}\big[{\varphi_T}^*(w^*,z\dbs) + h^*(z^* - w^*,x\dbs - z\dbs)\big]\notag\\
&\enspace= \minn_{(w^*,z\dbs) \in E^* \times E\dbs}\big[{\varphi_T}^*(w^*,z\dbs) + \I_{\{0\}}(z^* - w^*) + \sup\bra{\KT}{x\dbs - z\dbs}\big]\notag\\
&\enspace= \minn_{z\dbs \in E\dbs}\big[{\varphi_T}^*(z^*,z\dbs) + \sup\bra{\KT}{x\dbs - z\dbs}\big].\label{T4}
\end{align}
For all $(x,x^*) \in E \times E^*$, let
\begin{equation*}
f(x,x^*) :=
\begin{cases}
\inf_{y \in E}\big[\varphi_T(y,x^*) + \max\bra{x - y}{\KT}\big]
&(x^* \in \KT);\\
\infty
&(x^* \not\in \KT).
\end{cases}
\end{equation*}
Since $\KT$ is a $w(E^*,E)$--closed convex subset of $E^*$, we have
\begin{equation}\label{T5}
\supn_{w \in E}\big[\bra{w}{z^*} - \max\bra{w}{\KT}\big] = \IKT(z^*)
\end{equation} 
If $x^* \in \KT$ then, from \Lem{PHISlem}, for all $y \in E$, $\varphi_T(y,x^*) + \max\bra{x - y}{\KT} \ge \bra{y}{x^*} + \bra{x - y}{x^*} = \bra{x}{x^*}$ thus,
\begin{equation}\label{T6}
\all\ (x,x^*) \in E \times E^*,\quad f(x,x^*) \ge \bra{x}{x^*}.
\end{equation}
Thus $f\colon\ E \times E^* \to \rbar$ and $f$ is easily seen to be convex.   On the other hand, if $(t,t^*) \in G(T)$ and $t^* \in \KT$ then (taking $y = t$),
\begin{equation*}
f(t,t^*) \le \varphi_T(t,t^*) + \max\bra{t - t}{\KT} =  \bra{t}{t^*} + 0 = \bra{t}{t^*}.
\end{equation*}
Thus $f$ is proper.  From \eqref{T4} and \eqref{T5},
\begin{align}
f^*&(z^*,x\dbs)\notag\\
&= \sup_{(x,x^*) \in E \times \KT,\ y \in E}\big[\bra{x}{z^*} + \bra{x^*}{x\dbs} - \varphi_T(y,x^*) - \max\bra{x - y}{\KT}\big]\notag\\
&= \sup_{(w,x^*) \in E \times \KT,\ y \in E}\big[\bra{w + y}{z^*} + \bra{x^*}{x\dbs} - \varphi_T(y,x^*) - \max\bra{w}{\KT}\big]\notag\\
&= \sup_{(y,x^*) \in E \times \KT,\ w \in E}\big[\bra{y}{z^*} + \bra{x^*}{x\dbs} - \varphi_T(y,x^*) + \bra{w}{z^*} - \max\bra{w}{\KT}\big]\notag\\
&= \min_{z\dbs \in E\dbs}\big[{\varphi_T}^*(z^*,z\dbs) + \sup\bra{\KT}{x\dbs - z\dbs}\big] + \IKT(z^*).\label{T8}
\end{align}
From \eqref{T6}, 
\begin{equation*}
\all\ (x,x^*) \in E \times E^*,\quad f(x,x^*) + \half\|(x,x^*)\|^2 \ge \bra{x}{x^*} + \half\|(x,x^*)\|^2 \ge 0.
\end{equation*}
Rockafellar's version of the Fenchel duality theorem, \cite[Theorem~3(a), p.\ 85]{FENCHEL}, gives $(z^*,x\dbs) \in E^* \times E\dbs$ such that $f^*(z^*,x\dbs) + \half\|-(z^*,x\dbs)\|^2 \le 0$.   From \eqref{T8}, $z^* \in \KT$ and there exists $z\dbs \in E\dbs$ such  that \eqref{T1} is satisfied.   It follows from this that\quad ${\varphi_T}^*(z^*,z\dbs) + \bra{z^*}{x\dbs - z\dbs} + \half\|(z^*,x\dbs)\|^2 \le 0$.\quad Since\quad $\bra{z^*}{x\dbs} + \half\|(z^*,x\dbs)\|^2 \ge 0$,\quad this implies that\quad  ${\varphi_T}^*(z^*,z\dbs) - \bra{z^*}{z\dbs} \le 0$.\quad From \Thm{PHTHthm}, and \eqref{FITZ1}, ${\varphi_T}^*(z^*,z\dbs) = \bra{z^*}{z\dbs}$\quad and \quad $z\dbs \in T^\F(z^*)$\quad and so \eqref{T2} follows from \eqref{T1}.
\end{proof}    
Here is the promised proof of \Lem{FPlem}.
\begin{lemma}\label{FPALTlem}
Let $T\colon\ E \toto E^*$ be closed, monotone and quasidense, $\VT$ be an open convex subset of $E^*$, $\VT \ni 0$, $\VT \cap R(T) \neq \emptyset$ and
\begin{equation}\label{FPALT1}
(t,t^*) \in G(T)\ \hbox{and}\ t^* \in \VT \qlr \bra{t}{t^*} \ge 0.
\end{equation}
Then $(0,0) \in G(T)$.  
\end{lemma}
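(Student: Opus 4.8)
The plan is to follow the skeleton of the proof of \Lem{FPlem}, but to replace the appeal to the range sum theorem, \Thm{STRthm}, by the self-contained \Lem{Tlem}. First I would fix $y^* \in \VT \cap R(T)$ and, exactly as before, use the compactness of the segment $[0,y^*]$ inside the open set $\VT$ to choose $\eps > 0$ with $\KT := [0,y^*] + \eps E^*_1 \subset \VT$. The set $\KT$ is convex and, being the sum of the norm-compact segment $[0,y^*]$ and the $w(E^*,E)$-compact ball $\eps E^*_1$, is itself $w(E^*,E)$-compact; moreover $y^* + \eps E^*_1 \subset \KT$ shows $y^* \in \intr\,\KT$, so $R(T) \cap \intr\,\KT \ni y^*$. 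Thus $\KT$ meets the hypotheses of \Lem{Tlem}, which furnishes $z^* \in \KT$ and $z\dbs,x\dbs \in E\dbs$ satisfying \eqref{T1} and \eqref{T2}; in particular $z\dbs \in T^\F(z^*)$ and
\begin{equation*}
\bra{z^*}{z\dbs} + \sup\bra{\KT}{x\dbs - z\dbs} + \half\|(z^*,x\dbs)\|^2 \le 0.
\end{equation*}

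The crux of the argument, and the step I expect to be the main obstacle, is to show that $\bra{z^*}{z\dbs} \ge 0$. Here I would invoke the sequential characterization \Lem{EXSEQlem}: since $z\dbs \in T^\F(z^*)$, there is a sequence $\{(t_n,t_n^*)\}$ in $G(T)$ with $\bra{t_n}{t_n^*} \to \bra{z^*}{z\dbs}$ and $\|t_n^* - z^*\| \to 0$. Because $z^* \in \KT \subset \VT$ and $\VT$ is open, $t_n^* \in \VT$ for all large $n$, so the local monotonicity hypothesis \eqref{FPALT1} gives $\bra{t_n}{t_n^*} \ge 0$; passing to the limit yields $\bra{z^*}{z\dbs} \ge 0$. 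This is where the interplay between the Fitzpatrick extension and the hypothesis does the real work, and it is the only place that genuinely uses the constraint on the graph of $T$.

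It then remains to extract the conclusion, which is routine. Since $0 \in \KT$, we have $\sup\bra{\KT}{x\dbs - z\dbs} \ge 0$, and $\half\|(z^*,x\dbs)\|^2 \ge 0$ trivially; combined with $\bra{z^*}{z\dbs} \ge 0$, the displayed inequality from \eqref{T2} forces all three summands to vanish. In particular $\|(z^*,x\dbs)\| = 0$, so $z^* = 0$ and $x\dbs = 0$. Feeding $x\dbs = 0$ back into $\sup\bra{\KT}{x\dbs - z\dbs} = 0$ and using that $\eps E^*_1 \subset \KT$ gives $\eps\|z\dbs\| \le \sup\bra{\KT}{-z\dbs} = 0$, whence $z\dbs = 0$. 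Therefore $(0,0) = (z^*,z\dbs) \in G(T^\F)$, and \eqref{FITZ2} (applied with the trivial identity $\wh 0 = 0$) immediately yields $(0,0) \in G(T)$, as required.
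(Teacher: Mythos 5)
Your proof is correct and takes essentially the same route as the paper's own Appendix~2 argument: the same set $\KT := [0,y^*] + \eps E^*_1$, \Lem{Tlem} to produce $z^* \in \KT$ and $z\dbs, x\dbs \in E\dbs$ satisfying \eqref{T2}, \Lem{EXSEQlem} together with \eqref{FPALT1} to obtain $\bra{z^*}{z\dbs} \ge 0$, and then forcing the remaining terms of \eqref{T2} to vanish before applying \eqref{FITZ2}. The only (immaterial) difference is the final bookkeeping: the paper deduces $z\dbs = x\dbs$ directly from $\sup\bra{\KT}{x\dbs - z\dbs} \ge \eps\|x\dbs - z\dbs\|$, whereas you first conclude $z^* = 0$ and $x\dbs = 0$ and then use $\eps E^*_1 \subset \KT$ to get $z\dbs = 0$; both are valid.
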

\begin{proof}
Let $y^* \in \VT \cap R(T)$.   Since the segment $[0,y^*]$ is a compact subset of the open set $\VT$, we can choose $\eps > 0$ so that\quad$\KT := [0,y^*] + \eps E^*_1 \subset \VT$.\quad
From \eqref{DNORMAL3},  $R(T) \cap \intr\,R(\partial\tauKT) = R(T) \cap \intr\,\KT \ni y^*$.   Using \Lem{Tlem}, there exist  $z^* \in \KT$ and $z\dbs,x\dbs \in E\dbs$ satisfying \eqref{T2}.  
\par
From \Lem{EXSEQlem}, there exists a sequence $\{(t_n,t_n^*)\}_{n \ge 1}$ of elements of $G(T)$ such that\quad$\bra{t_n}{t_n^*} \to \bra{z^*}{z\dbs}$\quad and\quad $\|t_n^* - z^*\| \to 0$ as $n \to \infty$.\quad If $n$ is\break sufficiently large, $t_n^* \in \VT$, and so, from \eqref{FPALT1}, $\bra{t_n}{t_n^*} \ge 0$.   Passing to the limit, $\bra{z^*}{z\dbs} \ge 0$, and so \eqref{T2} now implies that
\begin{equation*}
\sup\bra{\KT}{x\dbs - z\dbs} + \half\|(z^*,x\dbs)\|^2 \le 0.
\end{equation*}  
Since\quad $\sup\bra{\KT}{x\dbs - z\dbs} \ge \eps\|x\dbs - z\dbs\|$,\quad it follows that $z\dbs = x\dbs$, $z^* = 0$ and $x\dbs = 0$.   Consequently, $0 = z\dbs \in T^\F(z^*) = T^\F(0)$, and \eqref{FITZ2} implies that $(0,0) \in G(T)$. 
\end{proof}
\begin{remark}
In the context of \Lem{Tlem}, one can in fact prove that\quad $\bra{z^*}{x\dbs} + \half\|(z^*,x\dbs)\|^2 = 0$\quand
$\bra{z^*}{x\dbs - z\dbs} = \sup\bra{\KT}{x\dbs - z\dbs}$.
\end{remark}
\par
We now sketch the ``dual'' version of \Lem{Tlem}, \Lem{Ulem}, which can be used to prove \Thm{FPVthm}: however this proof of \Thm{FPVthm} does\break require \Lem{EXSEQlem}, and the appearance of $\ddot K$ makes \Lem{Ulem} innately more ``technical'' than \Lem{Tlem}.
\begin{lemma}\label{Ulem}
Let $T\colon\ E \toto E^*$ be closed, monotone and quasidense and $K$ be a bounded closed convex subset of $E$ such that $D(T) \cap \intr K \ne \emptyset$.   Let $\ddot K$ be the $w(E\dbs,E^*)$--closure of $\wh K$ in $E\dbs$.   Then there exist $z\dbs \in \ddot K$ and $z^*,v^* \in E^*$ such that
\begin{equation*}
z\dbs \in T^\F(z^*)\hbox{ and }\bra{z^*}{z\dbs} + \sup\bra{K}{v^* - z^*} + \half\|(v^*,z\dbs)\|^2 \le 0.\label{U2}
\end{equation*}  
\end{lemma}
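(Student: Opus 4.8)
The plan is to dualize the proof of \Lem{Tlem}, interchanging the roles of domain and range and of $E^*$ and $E\dbs$. Where \Lem{Tlem} constrained the range by a $w(E^*,E)$--compact convex $\KT \subset E^*$ and produced $z^* \in \KT$, here the domain is constrained by $K \subset E$ and we must produce $z\dbs \in \ddot K$; the set $\ddot K$ enters precisely because $K$ is not assumed $w(E,E^*)$--compact, so $\wh K$ need not be $w(E\dbs,E^*)$--closed. First I would set $h(x,x^*) := \I_K(x)$ on $E \times E^*$. Since $K$ is nonempty bounded closed convex, $h \in \PCLSC(E \times E^*)$ with $\dom\,h = K \times E^*$, and a direct computation gives $h^*(y^*,y\dbs) = \sigma_K(y^*) + \I_{\{0\}}(y\dbs)$.

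Next I would exploit the hypothesis $D(T) \cap \intr\,K \ne \emptyset$: choosing $(t,t^*) \in G(T)$ with $t \in \intr\,K$, \Lem{PHISlem} gives $\varphi_T(t,t^*) = \bra{t}{t^*} \in \RR$, so $\dom\,\varphi_T \cap \intr\,\dom\,h \ne \emptyset$. Rockafellar's formula for the conjugate of a sum, \cite[Theorem~3(a), pp.\ 85--86]{FENCHEL}, then yields, for all $(v^*,z\dbs) \in E^* \times E\dbs$,
$$(\varphi_T + h)^*(v^*,z\dbs) = \minn_{z^* \in E^*}\big[{\varphi_T}^*(z^*,z\dbs) + \sup\bra{K}{v^* - z^*}\big].$$
I would then introduce the ``dual'' auxiliary function $f$, defined by $f(x,x^*) := \infn_{y^* \in E^*}\big[\varphi_T(x,y^*) + \sup\bra{K}{x^* - y^*}\big]$ when $x \in K$ and $f(x,x^*) := \infty$ otherwise. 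Using \Lem{PHISlem} together with $x \in K$ one checks $f(x,x^*) \ge \bra{x}{x^*}$ for all $(x,x^*)$, while taking $y^* = t^*$ for $(t,t^*) \in G(T)$ with $t \in K$ shows $f(t,t^*) \le \bra{t}{t^*}$, so $f$ is proper and (being a partial infimal convolution of convex functions restricted to $x \in K$) convex. Conjugating $f$, separating the inner infimal convolution from the constraint, and invoking the key identity ${\sigma_K}^* = \I_{\ddot K}$, I would obtain
$$f^*(v^*,z\dbs) = \minn_{z^* \in E^*}\big[{\varphi_T}^*(z^*,z\dbs) + \sup\bra{K}{v^* - z^*}\big] + \I_{\ddot K}(z\dbs),$$
the exact analogue of \eqref{T8}.

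Finally, since $f(x,x^*) + \half\|(x,x^*)\|^2 \ge \bra{x}{x^*} + \half\|(x,x^*)\|^2 \ge 0$, Rockafellar's version of the Fenchel duality theorem, \cite[Theorem~3(a), p.\ 85]{FENCHEL}, produces $(v^*,z\dbs)$ with $f^*(v^*,z\dbs) + \half\|(v^*,z\dbs)\|^2 \le 0$. The formula for $f^*$ forces $z\dbs \in \ddot K$ and yields $z^* \in E^*$ with ${\varphi_T}^*(z^*,z\dbs) + \sup\bra{K}{v^* - z^*} + \half\|(v^*,z\dbs)\|^2 \le 0$. Because $z\dbs \in \ddot K$ one has $\bra{v^* - z^*}{z\dbs} \le \sup\bra{K}{v^* - z^*}$; substituting this and using $\bra{v^*}{z\dbs} + \half\|(v^*,z\dbs)\|^2 \ge 0$ squeezes ${\varphi_T}^*(z^*,z\dbs) \le \bra{z^*}{z\dbs}$, whence ${\varphi_T}^*(z^*,z\dbs) = \bra{z^*}{z\dbs}$ by \Thm{PHTHthm}, and so $z\dbs \in T^\F(z^*)$ by \eqref{FITZ1}. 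Re-inserting this equality gives the asserted inequality $\bra{z^*}{z\dbs} + \sup\bra{K}{v^* - z^*} + \half\|(v^*,z\dbs)\|^2 \le 0$. The main obstacle is the handling of $\ddot K$: unlike \Lem{Tlem}, where $\KT$ is $w(E^*,E)$--compact and $\tauKT^* = \IKT$ recovers $\KT$ itself, here I must justify both the identity ${\sigma_K}^* = \I_{\ddot K}$ and the support inequality $\bra{w^*}{z\dbs} \le \sup\bra{K}{w^*}$ valid for $z\dbs \in \ddot K$, both of which rest on $\ddot K$ being the $w(E\dbs,E^*)$--closure of $\wh K$; the remaining steps follow the template of \Lem{Tlem}.
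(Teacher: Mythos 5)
Your proposal is correct and follows essentially the same route as the paper: the paper's own (sketched) proof of \Lem{Ulem} uses exactly the same auxiliary function $f$, the same conjugate formula $f^*(v^*,z\dbs) = \minn_{z^* \in E^*}\big[{\varphi_T}^*(z^*,z\dbs) + \sup\bra{K}{v^* - z^*}\big] + \I_{\ddot K}(z\dbs)$ resting on ${\sigma_K}^* = \I_{\ddot K}$ and Rockafellar's sum formula, and then invokes Rockafellar's Fenchel duality theorem ``just as in \Lem{Tlem}''. Your filled-in details (the choice of $h$, the interiority check via \Lem{PHISlem}, and the final squeeze forcing ${\varphi_T}^*(z^*,z\dbs) = \bra{z^*}{z\dbs}$ via \Thm{PHTHthm} and \eqref{FITZ1}) are precisely the intended dualization of \Lem{Tlem}.
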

\begin{proof}
For all $(x,x^*) \in E \times E^*$, let
\begin{equation*}
f(x,x^*) :=
\begin{cases}
\inf_{y^* \in E^*}\big[\varphi_T(x,y^*) + \sup\bra{K}{x^* - y^*}\big]
&(x \in K);\\
\infty
&(x \not\in K).
\end{cases}
\end{equation*}
Then $f$ is proper and convex, for all $(v^*,z\dbs) \in E^* \times E\dbs$,
\begin{equation*}
f^*(v^*,z\dbs) = \minn_{z^* \in E^*}\big[{\varphi_T}^*(z^*,z\dbs) + \sup\bra{K}{v^* - z^*}\big] + I_{\ddot K}(z\dbs)\end{equation*}
and, for all $(x,x^*) \in E \times E^*$,\quad 
$f(x,x^*) + \half\|(x,x^*)\|^2 \ge 0$.\quad The result now follows from Rockafellar's version of the Fenchel duality theorem, just as in \Lem{Tlem}.
\end{proof}

\end{document}